\numberwithin{equation}{section}
\newtheorem{theorem}{Theorem}[section]
\newtheorem{lemma}{Lemma}[section]
\newtheorem{remark}{Remark}[section]
\providecommand{\abs}[1]{\lvert #1\rvert}
\newcounter{neweqn}
\newcommand{\beq}[1]{\addtocounter{neweqn}{1}\begin{equation}\label{#1}}
\newcommand{\eeq}{\end{equation}}
\newcommand{\nr}[1]{(\ref{#1})}
\newcommand{\nc}{\newcommand}
\newcommand{\ra}{\rightarrow}
\newcommand{\beray}[1]{\addtocounter{neweqn}{1}\begin{eqnarray}  \label{#1}}
\nc{\vb}{\mathbf{v}}
\nc{\bx}{\mathbf{x}}
\nc{\by}{\mathbf{y}}
\nc{\bz}{\mathbf{z}}
\nc{\bu}{\mathbf{u}}
\nc{\bv}{\mathbf{v}}
\nc{\ba}{\mathbf{a}}
\nc{\bs}{\mathbf{s}}
\nc{\bq}{\mathbf{q}}
\nc{\bd}{\mathbf{d}}
\nc{\bb}{\mathbf{b}}
\nc{\bc}{\mathbf{c}}
\nc{\bi}{\mathbf{i}}
\nc{\bfr}{\mathbf{r}}
\nc{\bP}{\mathbf{P}}
\nc{\bQ}{\mathbf{Q}}
\nc{\R}{\mathbb R}
\nc{\N}{\mathbb N}
\nc{\bbC}{\mathbb C}
\nc{\D}{\mathbb D}
\nc{\Z}{\mathbb Z}
\nc{\F}{\mathbf F}
\nc{\bbS}{\mathbb S}
\nc{\bE}{\mathbf E}
\nc{\B}{\cal B}
\nc{\br}{\bigr}
\nc{\bl}{\bigl}
\nc{\Bl}{\Bigl}
\nc{\Br}{\Bigr}
\nc{\ind}[1]{\,\mathbf{1}_{\{#1\}}\,}
\author{Anatolii A. Puhalskii
}
\title{Moderate deviations of many--server queues \\
 in 
the Halfin--Whitt regime\\ and weak convergence methods
}
\begin{document}
\maketitle
\sloppy

\section{Introduction}
\label{sec:introduction}

Many--server queues are important in applications but their analysis
beyond Markovian assumptions is difficult, see, e.g., Asmussen
\cite{Asm03}. As a way to gain insight into the operation of
many--server queues,
various heavy--traffic 
asymptotics have been explored when the arrival and service rates tend
to infinity.
Iglehart  and Whitt \cite{IglWhi70}
 considered the number of servers, $n$\,,
 fixed, assumed that the traffic intensity, $\rho$\,, tends to unity 
  and  showed that properly normalized version of the queue
length  process converges to a one dimensional
reflected Brownian motion, in analogy with the heavy--traffic limit
for a $GI/GI/1$  queue.
As for the single server queue, the
probability of having at least one customer in the queue
converges to unity.
Many server queues with the numbers of servers going to infinity
 for different
initial conditions and assumptions on the interarrival and service
time distributions, were considered 
by Borovkov \cite{Bor67q,Bor80}, Iglehart  \cite{Igl65,Igl73} and
Whitt \cite{Whi82}. 
 The growth conditions imposed  on $n$ result in the queue being
asymptotically equivalent to an infinite server queue
so that the probability of wait tends to zero.

In order  to capture the situation of
the probability of a customer wait being strictly between zero and
unity,
 Halfin and
Whitt \cite{HalWhi81} proposed the setup where
   service time distributions are held fixed, while
 the number of servers and the arrival rate
 grow without a bound in such a way that 
$\sqrt{n}(1-\rho)\to\beta\in\R\,.$
With  $Q(t)$ denoting the number of customers present at
time  $t$\,, assuming the initial conditions are suitably chosen, the
sequence of processes $(Q(t)-n)/\sqrt{n}$\,,
considered as  random elements of the associated Skorohod space,
converges in law to a continuous--path process. 
In their  pioneering contribution,
   Halfin and Whitt \cite{HalWhi81} considered the case of
 exponentially distributed service times. The limit process then is an Ito
 diffusion with a piecewise linear drift.
 An extension  to generally
 distributed service times is more recent and is due to Reed
 \cite{Ree09}, see also Puhalskii and Reed \cite{PuhRee10}. 
The limit process, however, is no longer generally Markov and is not
well understood, cf. Kaspi and Ramanan \cite{KasRam13} and
Aghajani and Ramanan \cite{AghRam20}, for related results.

In search of more tractable asymptotics, it was proposed in 
Puhalskii \cite{Puh22} to study moderate deviations of $Q(t)$ and a Large
Deviation Principle (LDP) was conjectured  for the process
$(Q(t)-n)/(b_n\sqrt{n})$ under the heavy traffic condition
$  \sqrt{n}/b_n(1-\rho)\to\beta$\,,
where $b_n\to\infty$ and $b_n/n\to0$\,. 
 The 
deviation function (a.k.a. rate function) was purported to solve
a convex variational problem with a quadratic objective function. 
This paper confirms  the conjecture and proves the LDP in question. 
Furthermore,  the deviation function  is expressed in terms of
the solution to
   a Fredholm
equation of the second kind which could facilitate
 evaluating the deviation
function  numerically.
It is  noteworthy that although the LDP generally produces cruder
asymptotics than weak convergence, or convergence in law, it may capture  exponents in the
distributions of corresponding weak  convergence limits, cf. 
Puhalskii \cite{Puh99a}.

  The main thrust of the proofs  is to demonstrate the utility of
weak convergence methods for deriving large deviation asymptotics. As in Puhalskii \cite{puhwolf,Puh01}, the LDP is viewed
as an analogue of weak convergence, the cornerstone of the approach 
being the following analogue of celebrated Prohorov's tightness theorem: ``A
sequence of probability measures on a complete separable metric space
is exponentially tight if and only if every its  subsequence admits a further
subsequence that satisfies the LDP'' (Theorem (P) in Puhalskii
\cite{Puh91}). 
Consequently, once exponential
tightness has been proved, the
proof of the LDP is achieved by identifying 
a  large deviation limit point. A possible identification tool is an
analogue of the method of finite--dimensional distributions.
It is what is used in
 this paper in order to
 derive
 logarithmic asymptotics of
moderate deviations  of
the sequential empirical process, which  is a key element of the
proof of the main result, on the one hand,
 and  complements developments in Krichagina and Puhalskii
\cite{kripuh97} and in Puhalskii and Reed \cite{PuhRee10}, on the
other hand. In 
 order to use the analogy with weak convergence  to the utmost,
this paper formulates the definition of the LDP for stochastic
processes  as
  large deviation (LD) convergence  to
idempotent processes, see Puhalskii \cite{Puh01} and Appendix
\ref{sec:large-devi-conv}  for
definitions and properties. With tools for the study of
 weak convergence properties of  many--server queues 
in heavy traffic being well developed,
 the moderate deviation asymptotics are derived
 by using similar methods, mutatis mutandis,
 that proved their worth in the weak convergence
context. An important distinction on the technical side
is an extensive use of exponential martingales, as
establishing exponential tightness 
is a more challenging  task than  establishing
 tightness.
In addition,   a study of
 idempotent counterparts of
the standard Wiener process, the Brownian bridge and the
Kiefer process is carried out. The properties of those idempotent
processes are integral to the proofs.

The paper's organisation 
is as follows.
In Section
\ref{sec:fluid-stoch-appr}, a precise specification of
 the model is provided, as well as the main result on the  logarithmic
asymptotics of moderate deviations of the number--in--the--system
process. 
 The latter
 is stated both in the
form of a trajectorial 
LDP and in the form of LD convergence to an idempotent process.
The proofs are given in Section 
\ref{sec:proof-heavy}.
Section \ref{sec:eval-devi-funct} is concerned with evaluating the
deviation function by  reduction to solving a  Fredholm
equation of the second kind.
 For the reader's convenience,
  Appendix \ref{sec:large-devi-conv}
 gives a primer on idempotent
processes and the use of weak convergence methods for proving the
LDP. 
Appendix  \ref{renewal}  is concerned with  absolute
continuity of the solution to a
nonlinear renewal equation which is needed in Section \ref{sec:eval-devi-funct}.

\section{Trajectorial moderate deviation limit theorems}
\label{sec:fluid-stoch-appr}
Assume as  given a sequence of many server queues with unlimited
waiting room indexed by $n$, where $n$
represents the number of servers. Service is performed on a first--come--first--served
basis. Let $Q_n(t)$ denote the number of customers present at time
$t$\,. Out of those, $Q_n(t)\wedge n $
customers are in service and $(Q_n(t)-n)^+$ customers are
in the queue.  
 The service times of the customers in the queue at
time  $0$ and the service times
of customers exogenously arriving after time  $0$ are denoted
 by
 $\eta_1,\eta_2,\ldots$ in  the
 order of  entering service and  come from an
i.i.d. sequence of positive unbounded random variables with
continuous distribution function
$F$\,.  In terms of $F$\,, it is thus assumed that
\begin{equation}
  \label{eq:45}
F(0)=0\,, F(x)<1\,, \text{ for all }x\,.
\end{equation}
The mean service time $\mu^{-1}=\int_0^\infty x\,dF(x)$ is assumed to
be finite. Residual service times of the customers in service at
time  $0$ are denoted by
$\eta^{(0)}_1,\eta^{(0)}_2,\ldots$ and are assumed to be
 i.i.d. with distribution  $F_0$ that is the
 distribution  of the delay in a stationary renewal
 process with inter--renewal distribution $F$\,. Thus,
\begin{equation}
  \label{eq:73}
  F_0(x)=\mu\int_0^x(1-F(y))\,dy\,.
\end{equation}

Let $A_n(t)$ denote the number of exogenous arrivals
by $t$\,, with $A_n(0)=0$\,. It is assumed that $A_n(t)$ has unit jumps.
  The entities   $Q_n(0)$,
$\{\eta^{(0)}_1,\eta^{(0)}_2,\ldots\}$,
$\{\eta_1,\eta_2,\ldots\}$, and $A_n=(A_n(t),\,t\in\R_+)$ are assumed to be
 independent.
All stochastic processes are assumed to have
 rightcontinuous paths with lefthand limits.
Let 
${{\hat A}_n}(t)$ denote the number of customers that enter service after time
 $0$ and by time 
$t$\,, with 
${{\hat A}_n}(0)=0$\,.  Since the distribution function
 of  $\eta_i$ is continuous, $\hat A_n$ has
unit jumps a.s.
 By considering the jumps of the processes on both sides of the next
equation, one can
see that
\begin{equation}
    \label{eq:40}
 (Q_n(t)-n)^++{\hat A}_n(t)=(Q_n(0)-n)^++A_n(t)\,.
\end{equation}
In addition, balancing the arrivals and departures yields the equation
\begin{equation}
             \label{eq:107}
    Q_n(t)=Q_n^{(0)}(t)+(Q_n(0)-n)^+
+A_n(t)-\int_0^t\int_0^t \mathbf{1}_{\{s+x\le t\}}
\,d\,\sum_{i=1}^{{{\hat A}_n}(s)}\mathbf{1}_{\{\eta_i\le x\}}\,,
\end{equation}
where
\begin{equation}
  \label{eq:1a}
Q_n^{(0)}(t)=  \sum_{i=1}^{Q_n(0)\wedge n}\,\ind{\eta^{(0)}_i> t}\,,
\end{equation}
which quantity represents the number of customers present at time $t$ out
of those in service at time $0$\,, and 
\begin{equation}
  \label{eq:79}
  \int_0^t\int_0^t \mathbf{1}_{\{s+x\le t\}}
\,d\,\sum_{i=1}^{{{\hat A}_n}(s)}\mathbf{1}_{\{\eta_i\le x\}}=
\sum_{i=1}^{\hat A_n(t)}\ind{\hat\tau_{n,i}+\eta_i\le t}
\end{equation}
which quantity represents the number of customers that  enter service after  time $0$
and  depart by time $t$\,, $\hat\tau_{n,i}$ denoting the $i$th jump
time of $\hat A_n$\,, i.e.,
\begin{equation}
  \label{eq:24}
  \hat\tau_{n,i}=\inf\{t:\,\hat A_n(t)\ge i\}\,.
\end{equation}
For  existence and uniqueness of solutions to
\eqref{eq:40}--\eqref{eq:1a}, the reader is referred to
 Puhalskii and Reed
\cite{PuhRee10}.

Given a sequence $r_n\to\infty$\,, as $n\to\infty$\,,  a sequence $P_n$
of probability laws on the Borel $\sigma$--algebra of
 a metric space $M$ and  a $[0,\infty]$--valued function $I$
 on $M$ such that the sets $\{y\in M:\,I(y)\le \gamma\}$ are compact
 for all $\gamma\ge0$\,, the sequence $P_n$ is said to obey 
the LDP for rate $r_n$ with  deviation function $I$ provided
$  \lim_{n\to\infty}1/r_n\,\ln P_n(W)=-\inf_{y\in W}I(y)\,,
$ for every Borel set $W$ such that the infima of $I$ over the
interior  and  closure of $W$ agree.

The       deviation function for the queue length process is introduced next.
For $T>0$ and $m\in\N$\,, let $\D([0,T],\R^m)$ and
 $\D(\R_+,\R^m)$ represent the respective Skorohod spaces of rightcontinuous
$\R^m$--valued 
functions  with lefthand limits defined on $[0,T]$ and $\R_+$\,,
respectively.  
These spaces are
 endowed with 
 metrics rendering them  complete separable metric spaces, see Ethier and
 Kurtz \cite{EthKur86}, Jacod and Shiryaev \cite{jacshir}, for
 more detail.
Given $q=(q(t)\,,t\in\R_+)\in\D(\R_+,\R)$\,, let
\begin{equation}
   \label{eq:2}
         I^Q(q)=\frac{1}{2}\inf\{\int_0^1\dot w^0(x)^2\,dx
+\int_0^\infty \dot w(t)^2\,dt
+\int_0^\infty\int_0^1 \dot k(x,t)^2\,dx\,dt\}\,,
 \end{equation}
the $\inf$ being taken over $w^0=(w^0(x)\,,x\in[0,1])\in \D([0,1],\R),
w=(w(t)\,,t\in\R_+)\in\D(\R_+,\R)$ and
$k=((k(x,t)\,,x\in[0,1])\,,t\in\R_+)\in\D(\R_+,\D([0,1],\R_+))$ such that
$w^0(0)=w^0(1)=0$\,, $w(0)=0$\,, $k(x,0)=k(0,t)=k(1,t)=0$\,,
$w^0$, $w$ and $k$ are absolutely continuous with respect to Lebesgue measures
on  $[0,1]$\,, $\R_+$ and $[0,1]\times \R_+$\,, respectively, and, for
all $t$\,, with $q_0^-=(-q_0)^+$\,,
\begin{multline}\label{eq:1}
q(t)=(1-F(t))q_0^+-(1-F_0(t))q_0^-
-\beta F_0(t) +\int_0^tq(t-s)^+\,dF(s)+w^0(F_0(t))\\
+\int_0^t\bl(1-F(t-s)\br)\sigma\,\dot w(s)\,ds
+\int_{\R_+^2} \ind{s+x\le t}\,\dot k(F(x),\mu s)\,dF(x)\,\mu \,ds
\,,
\end{multline}
 where 
 $\dot w^0(x)$\,, $\dot w(t)$ and $\dot k(x,t)$
 represent the respective Radon--Nikodym derivatives.
If  $w^0$\,, $w$ and $k$ as indicated do not exist, then
$I^Q(q)=\infty$\,.
Consequently, $I^Q(q)=\infty$ unless $q(t)$ is  continuous as the
righthand side of \eqref{eq:1} is a continuous function of $t$\,.
(It is proved in Lemma \ref{le:ac} that if $F$ is, in addition, absolutely
continuous with respect to Lebesgue measure, then 
$q(t)$  in \eqref{eq:1} is absolutely continuous too.)
 By Lemma B.1 in
Puhalskii and Reed \cite{PuhRee10}, the equation 
in \eqref{eq:1} has a unique solution for $q(t)$
as an element of $\mathbb L_{\infty,\text{loc}}(\R_+,\R)$\,.

Let the  processes $X_n=(X_n(t),t\in\R_+)$ be defined  by
\begin{equation}
    \label{eq:30}
X_n(t)=\frac{\sqrt{n}}{b_n}\bl(\frac{Q_n(t)}{n}-1\br)\,.
\end{equation}
The    next theorem  confirms a conjecture in Puhalskii
\cite{Puh22}. 
\begin{theorem}
\label{the:ldp}Suppose, in addition,  that
 $A_n$ is a  renewal process of rate $\lambda_n$\,.
Let $\rho_n=\lambda_n/(n\mu)$\,, 
$\beta\in\R$\,, $q_0\in\R$ and $\sigma>0$\,.
 Suppose that,  
as $n\to\infty$,
\begin{equation}
  \label{eq:105}
  \frac{\sqrt{n}}{b_n}(1-\rho_n)\to\beta
\end{equation}
 and
the sequence of random variables
$\sqrt{n}/b_n\,(Q_n(0)/n-1)$ obeys the LDP in $\R$ for rate $b_n^2$
with deviation function
$I_{q_0}(y)$ such that $I_{q_0}(q_0)=0$ and
$I_{q_0}(y)=\infty$\,,  for $y\not=q_0$\,.
Suppose that
 the sequence of processes
$\bl(\sqrt{n}/b_n(A_n(t)/n-
\mu\rho_nt),t\in\R_+\br)
$
obeys the LDP in $\D(\R_+,\R)$  for rate $b_n^2$
with deviation function 
$I^A(a)$ such that $I^A(a)=1/(2\sigma^2)\int_0^\infty\dot a(t)^2\,dt$\,, 
provided $a=(a(t)\,,t\in\R_+)$ is an absolutely continuous nondecreasing function with
$a(0)=0$\,, and
$I^A(a)=\infty$\,, otherwise.\
If, in addition,
\begin{equation}
  \label{eq:3}
  b_n^3n^{1/b_n^2-1/2}\to0\,,
\end{equation}
 then the sequence 
$X_n$ obeys
 the LDP in $\D(\R_+,\R)$ for  rate $b_n^2$
 with deviation function 
$I^Q(q)$\,. 
\end{theorem}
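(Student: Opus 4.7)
The plan is to follow the exponential tightness/identification paradigm indicated in the Introduction. By the LD analogue of Prohorov's theorem (Theorem (P) in Puhalskii \cite{Puh91}), it suffices to (i) establish exponential tightness of the sequence $X_n$ in $\D(\R_+,\R)$ at rate $b_n^2$, and (ii) show that every LD accumulation point of $X_n$ is the idempotent process whose deviation function is $I^Q$ from \eqref{eq:2}. I would work throughout with LD convergence to idempotent processes as developed in Puhalskii \cite{Puh01} and recalled in the Appendix.

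The natural starting point is the balance equation \eqref{eq:107}, rescaled by $b_n\sqrt n$. This expresses $X_n$ as a functional of three centered driving ingredients: an initial--residual empirical process built from the i.i.d.\ $F_0$--samples $\eta^{(0)}_1,\ldots,\eta^{(0)}_{Q_n(0)\wedge n}$, the scaled compensated arrival process $\sqrt n/b_n\,(A_n(t)/n-\mu\rho_n t)$, and the scaled \emph{sequential empirical process} $\sqrt n/b_n\,\bigl(\frac{1}{n}\sum_{i=1}^{\hat A_n(ns)}\mathbf 1_{\{\eta_i\le x\}}-\frac{\hat A_n(ns)}{n}F(x)\bigr)$, viewed as an element of $\D(\R_+,\D([0,1],\R))$. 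The arrival LDP with deviation function $I^A$ is given. For the initial--residual piece the concentration of $\sqrt n/b_n(Q_n(0)/n-1)$ at $q_0$ reduces matters to a moderate--deviation analogue of Donsker's theorem for the scaled empirical CDF of $F_0$, whose LD limit is an idempotent Brownian bridge evaluated at $F_0(t)$. The crucial preparatory step is the LDP for the sequential empirical process, the idempotent analogue of the Kiefer process, with deviation function $\tfrac12\int_0^\infty\!\int_0^1\dot k(x,t)^2\,dx\,dt$; I would establish it by the method of finite--dimensional distributions, computing logarithmic moment generating functions of $\exp(b_n^2\sum_j\theta_j\Delta_j)$ via the classical Cram\'er transform for i.i.d.\ indicators, and proving exponential tightness in the two--parameter Skorohod space by chaining exponential submartingale inequalities for bounded--jump martingales. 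Joint LD convergence of the triple then follows by independence.

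Exponential tightness of $X_n$ itself should be inherited from the three driving processes by propagating oscillation bounds through \eqref{eq:107}; the departure integral $\int_0^t\!\int_0^t \mathbf 1_{\{s+x\le t\}}\,d\sum_{i=1}^{\hat A_n(s)}\mathbf 1_{\{\eta_i\le x\}}$ is controlled via the sequential empirical LDP together with the deterministic bound $\hat A_n(t)\le (Q_n(0)-n)^++A_n(t)$ supplied by \eqref{eq:40}. Condition \eqref{eq:3} enters precisely as the input that rules out anomalously large jumps at rate $b_n^2$. To identify the limit, one passes to the LD limit in the rescaled \eqref{eq:107}: continuity of $F$, the asymptotics $(1-\rho_n)\to0$, and \eqref{eq:105}, which supplies the drift $-\beta F_0(t)$ after subtracting the mean of the departure integral, give the limiting integral equation \eqref{eq:1} driven by some absolutely continuous idempotent triple $(w^0,w,k)$. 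Uniqueness of solutions (Lemma B.1 in Puhalskii and Reed \cite{PuhRee10}) combined with the contraction principle applied to the joint LDP of the drivers then yields $I^Q$.

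The principal obstacle will be the sequential empirical LDP together with the exponential tightness of the departure integral in \eqref{eq:107}. Oscillations must be controlled simultaneously in the spatial variable $x\in[0,1]$ and the time variable $t\in\R_+$ at the moderate--deviation rate $b_n^2$, so the chaining arguments familiar from weak convergence must be upgraded to exponential bounds, and it is here that \eqref{eq:3} becomes indispensable. A secondary subtlety is the positive part appearing in \eqref{eq:40}: one must verify that this nonlinearity does not introduce a reflection in the LD limit, which is indeed the case because the finiteness of $I^Q$ forces the limiting trajectory to be absolutely continuous, so \eqref{eq:1} is a genuine integral equation rather than a stochastic reflected one.
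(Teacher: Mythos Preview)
Your overall architecture is the paper's: recast \eqref{eq:107} in centred form, establish LD convergence of the driving pieces (initial residuals, arrivals, sequential empirical process), and close via the solution map from Puhalskii--Reed \cite{PuhRee10}. Two of your steps, however, are underspecified in ways that hide the real work.

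First, the sequential empirical process you write is already time--changed by $\hat A_n$\,, but $\hat A_n$ is determined implicitly by the queueing dynamics, so you cannot get its LDP by Cram\'er computations on i.i.d.\ indicators. The paper decouples: it proves the Kiefer--type LD convergence for the \emph{un--time--changed} process $K_n(x,t)=(b_n\sqrt n)^{-1}\sum_{i=1}^{\lfloor nt\rfloor}(\mathbf 1_{\{\zeta_i\le x\}}-x)$ (Lemma~\ref{le:kiefer}), separately establishes the super--exponential LLN $\hat A_n(t)/n\to\mu t$ at rate $b_n^2$ (Lemma~\ref{le:lln}), and composes via the LD Slutsky theorem. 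The LLN is itself delicate because it needs a~priori control on the departure term; the paper bootstraps by first proving only asymptotic boundedness in probability of order $b_n^2$ for $U_n$ and for a martingale piece $M_n$ (see \eqref{eq:23}, \eqref{eq:23a}), which suffices to close the LLN before any LD limit is identified. Condition \eqref{eq:3} enters not to ``rule out large jumps'' but in the moment bound (via Petrov) that gives $\bbC$--exponential tightness of $K_n$ in the $t$ variable.

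Second, the departure term $\Theta_n(t)=-\iint \mathbf 1_{\{s+x\le t\}}\,dU_n(x,s)$ is \emph{not} a continuous functional of $U_n$ in the product Skorohod topology (integration runs along the moving diagonal $s+x=t$), so LD convergence of $U_n$ does not deliver that of $\Theta_n$. The paper's device is the decomposition $\Theta_n=J_n-M_n$ of \eqref{eq:41}, where $J_n(t)=\int_0^t U_n(t-x,x)(1-F(x))^{-1}\,dF(x)$ \emph{is} a continuous functional of $U_n$ and $M_n$ is a local martingale with respect to the filtration $\mathbf G_n$ of \cite{PuhRee10}. Exponential tightness of $M_n$ then comes from stochastic--exponential bounds on its cumulant; identification of the limit of $\Theta_n$ is done by a step--function approximation $\Theta_n^{(l)}$ (finite--dimensional in $U_n$) together with a further martingale estimate showing $\Theta_n-\Theta_n^{(l)}\to0$ super--exponentially. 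Your ``chaining'' hint does not cover this; the $\mathbf G_n$--martingale structure is what makes the exponential estimates available.
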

\begin{remark}
  In order that the  LDP for $\bl(\sqrt{n}/b_n(A_n(t)/n-
\mu \rho_nt),t\ge0\br)
$ in the statement hold, it  suffices that $E(n\xi_n)\to1/\mu$\,,
$\text{Var}(n\xi_n)\to \sigma^2/\mu^3$ and that either
$\sup_nE(n\xi_n)^{2+\epsilon}<\infty$\,, for some $\epsilon>0$\,,
and $\sqrt{\ln n}/b_n\to\infty$ or
$\sup_nE\exp(\alpha (n\xi_n)^{\delta})<\infty$ and
$n^{\delta/2}/b_n^{2-\delta}\to\infty$\,, for some $\alpha>0$ and
$\delta\in(0,1]$\,, where $\xi_n$ represents a generic interarrival time
for the $n$-th queue, see Puhalskii \cite{Puh99a}.
\end{remark}

LD convergence versions come next. Introduce
\begin{equation}
  \label{eq:46}
  Y_n(t)=\frac{\sqrt{n}}{b_n}\,\bl(\frac{A_n(t)}{n}-\mu t\br)
\end{equation}
and let
$Y_n=(Y_n(t),\,t\in\R_+)$\,. For the statement of the next theorem,
Appendix \ref{sec:large-devi-conv} is a recommended reading.
\begin{theorem}
  \label{the:heavy_traffic}
  \begin{enumerate}
  \item 
Suppose that,  as $n\to\infty$,
  the sequence $X_n(0)$ LD converges in distribution
in $\R$ at rate $b_n^2$ to an idempotent variable $X(0)$\,,
   the sequence $Y_n$ LD  converges in
 distribution   in $\D(\R_+,\R)$ at rate $b_n^2$ to
an idempotent  process
 $Y$ with continuous paths and \eqref{eq:3} holds.
Then  the sequence $X_n$
LD converges in distribution in $\D(\R_+,\R)$ at rate $b_n^2$ to 
    the idempotent process
$X=(X(t),t\in\R_+)$ that is the unique  solution to  the equation
\begin{multline}
  \label{eq:70}
         X(t)=(1-F(t))X(0)^+-
         (1-F_0(t))X(0)^-+\int_0^tX(t-s)^+\,dF(s)+W^0(F_0(t))
\\
+Y(t)-\int_0^t Y(t-s)\,dF(s)
+\int_0^t\int_{0}^t \ind{s+x\le t}\,\dot K(  F(x),\mu s)\,dF(x)\,\mu\,ds
\,,
 \end{multline}
where
$W^0=(W^0(x),x\in[0,1])$ is a  Brownian bridge idempotent process and
 $K=(K(x,t),
(x,t)\in\R_+\times [0,1])$ is a  Kiefer idempotent process,  $X(0)$,
$Y$\,, $W^0$ and
$K$ being
independent.
\item
In the special case where $A_n$ is a renewal process of rate $\lambda_n$\,,
condition \eqref{eq:105} holds 
 and the sequence
$(\sqrt{n}/b_n\,(A_n(t)-\mu\rho_nt)\,,t\in\R_+)$ LD converges in
distribution in $\D(\R_+,\R)$
at rate $b_n^2$ to $\sigma W$\,, with 
  $W=(W(t)\,,t\in\R_+)$ being a standard Wiener idempotent process,
 the 
 limit idempotent process $X$ solves the equation
 \begin{multline}
   \label{eq:36}
       X(t)=(1-F(t))X(0)^+-(1-F_0(t))X(0)^--\beta F_0(t)
+\int_0^tX(t-s)^+\,dF(s)
\\+W^0(F_0(t))+\int_0^t\bl(1-F(t-s)\br)\sigma\,\dot W(s)\,ds
+\int_0^t\int_{0}^t \ind{s+x\le t}\,\dot K(F(x),\mu s)\,dF(x)\,\mu \,ds
\,,
 \end{multline}
where  $X(0)$,
$W$, $W^0$ and
$K$ are
independent.
  \end{enumerate}

\end{theorem}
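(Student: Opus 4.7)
The plan is to leverage the equivalence between the LDP and LD convergence to idempotent processes (Theorem P of \cite{Puh91} and Appendix \ref{sec:large-devi-conv}) together with an LD analogue of the continuous mapping techniques used in weak-convergence proofs for many-server queues, as in Puhalskii and Reed \cite{PuhRee10}. I expect part 2 to follow from part 1 by specialising $Y=\sigma W-\beta\mu\,\cdot$ and converting $Y(t)-\int_0^tY(t-s)\,dF(s)$ into $-\beta F_0(t)+\int_0^t(1-F(t-s))\sigma\dot W(s)\,ds$ via integration by parts on absolutely continuous trajectories (the only ones with finite deviation), since $\sigma W(t)-\int_0^t\sigma W(t-s)\,dF(s)=\int_0^t(1-F(t-s))\sigma\dot W(s)\,ds$ and $\beta\mu\int_0^t(1-F(s))\,ds=\beta F_0(t)$. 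It therefore suffices to prove part 1.

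The starting point is the balance equation \eqref{eq:107}. Centering and scaling by $\sqrt n/b_n$ rewrites it as a pre-limit version of \eqref{eq:70} in which $X_n(t)$ is driven by three fluctuation fields: (i) the centered initial-state indicator sum $\sqrt n/b_n\,\sum_{i=1}^{Q_n(0)\wedge n}(\mathbf 1_{\eta^{(0)}_i>t}-(1-F_0(t)))/n$, which should LD-converge to a time-changed Brownian-bridge idempotent process $W^0\circ F_0$; (ii) the scaled arrival fluctuation $Y_n$, which converges to $Y$ by hypothesis; and (iii) the scaled sequential empirical field $\sqrt n/b_n\,\sum_{i=1}^{\lfloor n s\rfloor}(\mathbf 1_{\eta_i\le x}-F(x))/n$, which should LD-converge to the Kiefer idempotent process $K(F(x),\mu s)$, pulled back through $\hat A_n$ whose fluid-type limit is $\mu t$. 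The fluid behaviour $Q_n(t)/n\to 1$ and $\hat A_n(t)/n\to \mu t$ (in an LD sense, at rate $b_n^2$) is required to replace the random time-change by its deterministic counterpart.

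The proof then proceeds in four steps: (a) prove joint exponential tightness of $X_n$ with the three driving fields; (b) apply Theorem P to extract LD-convergent subsequences; (c) identify any subsequential limit $X$ as a solution of \eqref{eq:70} by passing to the limit in the pre-limit identity, using the continuity of $x\mapsto x^+$ in the uniform topology on absolutely continuous trajectories, exactly as in \cite{PuhRee10}; and (d) invoke an idempotent analogue of the existence-and-uniqueness Lemma B.1 of Puhalskii and Reed \cite{PuhRee10} for the equation \eqref{eq:70} to conclude that the full sequence LD-converges to a deterministically specified idempotent process, with independence of $X(0)$, $Y$, $W^0$ and $K$ arising from the independence of the primitives $Q_n(0)$, $A_n$, $\{\eta^{(0)}_i\}$ and $\{\eta_i\}$ in the pre-limit.

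The main obstacle I expect is the combination of exponential tightness for $X_n$ and LD convergence of the sequential empirical field to the Kiefer idempotent process, both at rate $b_n^2$, under the mild growth condition \eqref{eq:3}. This forces one to replace maximal $L^p$ martingale inequalities by genuinely \emph{exponential} martingale estimates, to control the logarithmic moment generating functions of the indicator sums uniformly in $(x,t)\in[0,1]\times\R_+$, and to absorb the combinatorial and entropy factors produced by chaining or covering arguments on this product space; the $n^{1/b_n^2-1/2}\to 0$ margin in \eqref{eq:3} is dictated precisely by that absorption. Once this piece is in place, the rest of the argument is a mutatis-mutandis transcription of the weak-convergence scheme of Puhalskii and Reed \cite{PuhRee10} into the idempotent framework.
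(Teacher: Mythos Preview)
Your plan is correct and matches the paper's approach: the paper decomposes the pre-limit identity into $X_n^{(0)}$, $H_n$ and the double-integral term $\Theta_n$, proves an LD analogue of the Krichagina--Puhalskii sequential-empirical limit (Lemma~\ref{le:kiefer}), establishes the super-exponential LLN $\hat A_n(t)/n\to\mu t$ to undo the random time change, and closes with the continuous-mapping/uniqueness argument via Lemma~B.2 of \cite{PuhRee10}, exactly as you outline; part~2 is indeed obtained by specialising $Y=\sigma W-\beta\mu\,\cdot$.

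Two points where your anticipated execution diverges from the paper's. First, the map $U_n\mapsto\Theta_n=-\int\!\!\int\mathbf 1_{\{s+x\le t\}}\,dU_n(x,s)$ is \emph{not} continuous on $\D(\R_+,\D(\R_+,\R))$, so ``passing to the limit in the pre-limit identity'' does not dispose of this term by continuous mapping alone; the paper writes $\Theta_n=J_n-M_n$ with $J_n$ a genuine continuous functional of $U_n$ and $M_n$ a $\mathbf G_n$-martingale, proves $\bbC$-exponential tightness of $M_n$ via exponential martingales, and then identifies the limit of $\Theta_n$ through a piecewise-constant approximation $\Theta_n^{(l)}$ shown to converge super-exponentially---this step is as substantial as the Kiefer lemma itself. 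Second, the Kiefer tightness in Lemma~\ref{le:kiefer} is not obtained by chaining or entropy covering: the paper exploits the distributional symmetry $(K_n(x,t))\stackrel{d}{=}(-K_n(1-x,t))$ to reduce to $x\in[0,1/2]$, controls $K_n$ by the martingale $B_n$ via Gronwall on \eqref{sm}, and then applies Doob's inequality together with a Rosenthal-type moment bound from Petrov to $B_n(1,\cdot)$; condition~\eqref{eq:3} enters precisely through the factor $(b_n^2)!^{1/b_n^2}\sim b_n^2$ in that moment bound, not through an entropy integral.
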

It is noteworthy that
 the LD convergence of the sequence $X_n$ in part 2 
  is equivalent to the assertion of Theorem
\ref{the:ldp}\,.
Besides, the limit idempotent process in \eqref{eq:70} is analogous to
the limit stochastic  process
 in  Puhalskii and Reed \cite[p.139]{PuhRee10}.
\section{Proofs of Theorems  \ref{the:ldp} and \ref{the:heavy_traffic}  }
\label{sec:proof-heavy}
This section is mostly concerned with the proof of 
part 1 of Theorem~\ref{the:heavy_traffic}. Part 2 and Theorem
\ref{the:ldp} are easy consequences.  A proof outline is as follows.

Let
\begin{align}
    \label{eq:16}
  H_n(t)&=Y_n(t)-\int_0^t
Y_n(t-s)\,dF(s)\,,\\ \label{eq:63}
 X^{(0)}_n(t)&=\frac{\sqrt{n}}{b_n}\,\bl(\frac{1}{n}\, Q_n^{(0)}(t)
-(1-F_0(t))\br)\,,\\
  \label{eq:33}
    U_n(x,t)&=\frac{1}{b_n\sqrt{n}}\,\sum_{i=1}^{{{\hat A}_n}(t)}
\bl(\ind{\eta_i\le x}-F(x)\br)
\intertext{ and }
  \label{eq:19}
  \Theta_n(t)&=-
\int_{\R_+^2} \mathbf{1}_{\{s+x\le t\}}\,dU_n(x,s)\,.
\end{align}
As, owing to \eqref{eq:73}, 
\begin{equation}
  \label{eq:78}
  1-F_0(t)+\mu t-\mu\int_0^t(t-s)dF(s)=1\,,
\end{equation}
 \eqref{eq:107} implies that
  \begin{align}
      \label{eq:67}
  X_n(t)&=(1-F(t))X_n(0)^++ X_n^{(0)}(t)+
\int_0^tX_n(t-s)^+\,dF(s)
+H_n(t)+\Theta_n(t)
\,.
\end{align}
The equation in  \eqref{eq:70} is written in a similar way: introducing
\begin{align}
          \label{eq:47}
  H(t)&=Y(t)-\int_0^t Y(t-s)\,dF(s)\,,
\\
\label{eq:47a}X^{(0)}(t)&=W^0(F_0(t))-(1-F_0(t))X(0)^-\,,
\\\label{eq:26}
U(x,t)&=K(F(x),\mu t)\intertext{and}
\label{eq:12}  \Theta(t)&=
-\int_{\R_+^2} \mathbf{1}_{\{s+x\le t\}}\,dU(x,s)
=-\int_{\R_+^2} \mathbf{1}_{\{s+x\le t\}}\,\dot K(F(x),\mu s)\,dF(x)\,\mu ds
\end{align}
yield
\begin{equation}
  \label{eq:71}
           X(t)=(1-F(t))X(0)^++X^{(0)}(t)+\int_0^tX(t-s)^+\,dF(s)
+H(t)+\Theta(t)\,.
\end{equation}
It will be proved that the equations \eqref{eq:67} LD converge
termwise to the equation \eqref{eq:71}, as $n\to\infty$\,.
More specifically, the following  theorem will be proved.
Introduce $H=(H(t),\,t\in\R_+)$,
$H_n=(H_n(t),\,t\in\R_+)$,
$X^{(0)}=(X^{(0)}(t),\,t\in\R_+)$,
$ X_n^{(0)}=( X_n^{(0)}(t),\,t\in\R_+)$,
$U=((U(x,t)\,,x\in\R_+)\,,t\in\R_+)$\,,
$U_n=((U_n(x,t)\,,x\in\R_+)\,,t\in\R_+)$\,,
$\Theta=(\Theta(t)\,,t\in\R_+)$
and $\Theta_n=(\Theta_n(t),\,t\in\R_+)$\,. 
\begin{theorem}
  \label{le:L}
  As $n\to\infty$, the sequence $(X_n(0), X_n^{(0)},H_n,\Theta_n)$
 LD converges in distribution at rate $b_n^2$ in $\R\times\D(\R_+,\R)^3$ to
$(X(0),X^{(0)},H,\Theta)$\,.
\end{theorem}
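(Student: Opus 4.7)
The plan is to reduce the four-component LD convergence to a combination of continuous-mapping arguments, marginal LD convergence of elementary empirical-type processes, and transfer of independence from the prelimit to the limit.

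First I would dispose of the two ``easy'' components. The convergence of $X_n(0)$ to $X(0)$ is hypothesised. For $H_n$, the map $y\mapsto y(\cdot)-\int_0^\cdot y(\cdot-s)\,dF(s)$ is Lipschitz on compacts of $\D(\R_+,\R)$ at points with continuous paths, so the assumed LD convergence $Y_n\Rightarrow_{\mathrm{LD}}Y$ together with the contraction principle for LD convergence (Appendix \ref{sec:large-devi-conv}) yields $H_n\Rightarrow_{\mathrm{LD}}H$. For $X_n^{(0)}$, I would write
\begin{equation*}
X_n^{(0)}(t)=\frac{\sqrt n}{b_n}\Bl(\frac1n\sum_{i=1}^{n}\ind{\eta^{(0)}_i>t}-(1-F_0(t))\Br)\,\mathrm{sgn}+\frac{\sqrt n}{b_n}\Bl(\frac{Q_n(0)\wedge n-n}{n}\Br)(1-F_0(t))+\text{negligible},
\end{equation*}
with the two principal terms LD converging respectively to $W^0(F_0(t))$, by the moderate-deviation LDP for the empirical process (this is where condition \eqref{eq:3} enters, controlling the Bernoulli exponential-moment estimates), and to $-X(0)^-(1-F_0(t))$, from the assumed LDP for $X_n(0)$ together with $X(0)\wedge 0=-X(0)^-$. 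A continuous-mapping argument then gives $X_n^{(0)}\Rightarrow_{\mathrm{LD}}X^{(0)}$.

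The hard part is $\Theta_n$, which requires establishing moderate-deviation convergence for the sequential empirical process $U_n(x,t)$ to the Kiefer idempotent field $K(F(x),\mu t)$. I would proceed by introducing the auxiliary process $V_n(x,s)=(b_n\sqrt n)^{-1}\sum_{i=1}^{\lfloor ns\rfloor}(\ind{\eta_i\le x}-F(x))$ and carrying out three sub-steps: (a) prove that $V_n$ LD converges at rate $b_n^2$ in $\D(\R_+,\D([0,1],\R))$ to $K(F(\cdot),\cdot)$, with exponential tightness in both parameters obtained through exponential-martingale estimates for the Bernoulli increments under \eqref{eq:3}, and with finite-dimensional LD limits identified by the exponential-martingale/Cram\'er calculation standard in the weak-convergence analogy of Puhalskii \cite{puhwolf,Puh01}; (b) show that $\hat A_n(t)/n\to\mu t$ uniformly on compacts at rate $b_n^2$, which follows from the fluid dynamics of \eqref{eq:107} combined with the hypothesised LDP for $Y_n$ and a contraction with the inversion map; (c) use a composition argument (continuity of $(v,a)\mapsto v(\cdot,a(\cdot))$ at continuous limits) to conclude $U_n(x,t)=V_n(x,\hat A_n(t)/n)\Rightarrow_{\mathrm{LD}}K(F(x),\mu t)$. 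Continuity of $u\mapsto -\int_{\R_+^2}\ind{s+x\le t}\,du(x,s)$ then transfers this to $\Theta_n\Rightarrow_{\mathrm{LD}}\Theta$.

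Joint LD convergence follows because $Q_n(0)$, $\{\eta^{(0)}_i\}$, $\{\eta_i\}$ and $A_n$ are independent in the prelimit: marginal LD convergence of independent sequences implies joint LD convergence with independent idempotent limits (Appendix \ref{sec:large-devi-conv}), and since $X_n^{(0)}$, $H_n$, $\Theta_n$ and $X_n(0)$ are measurable functionals of four disjoint blocks of the data, the limit $(X(0),X^{(0)},H,\Theta)$ inherits the requisite independence of $X(0)$, $W^0$, $Y$ and $K$. The principal technical obstacle is sub-step (a) above: exponential tightness of the Kiefer-type sequential empirical process in the product Skorohod space at rate $b_n^2$, because it demands two-parameter exponential moduli estimates for indicator sums that only barely close under \eqref{eq:3}, and the identification of the idempotent limit requires a careful exponential-martingale computation in both the $x$ and $s$ variables.
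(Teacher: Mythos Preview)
Your outline is close in spirit to the paper's, but there are two genuine gaps.

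First, and most seriously, the map $u\mapsto -\int_{\R_+^2}\ind{s+x\le t}\,du(x,s)$ is \emph{not} continuous on $\D(\R_+,\D(\R_+,\R))$, even at continuous $u$. The integrand is a discontinuous indicator and the two-parameter ``differential'' $du$ is only controlled in a sup-norm/Skorohod sense; for the prelimit $U_n$ the integral is a sum over jumps, for the limit $U(x,t)=K(F(x),\mu t)$ it is defined via the density $\dot K$, and there is no topology in which the passage is a continuous functional. The paper does not attempt this. Instead it writes $\Theta_n=J_n-M_n$ (equations \eqref{eq:41}, \eqref{eq:17}, \eqref{eq:P18}), where $J_n(t)=\int_0^t U_n(t-x,x)(1-F(x))^{-1}\,dF(x)$ is a one-parameter integral of a continuous functional of $U_n$ and hence amenable to continuous mapping, while $M_n$ is a genuine local martingale for a filtration built from the service and arrival data. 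Exponential tightness of $M_n$ (hence of $\Theta_n$) is obtained by exponential-martingale estimates on $M_n$ directly (Lemma~\ref{le:exp_tight}). Identification of the limit then proceeds via discrete approximations $\Theta_n^{(l)}$ (Lemma~\ref{le:joint_promezh}): one shows $\Theta_n^{(l)}\Rightarrow_{\mathrm{LD}}\Theta^{(l)}$ because these are finite sums of increments of $U_n$, then $\Theta^{(l)}\to\Theta$ as $l\to\infty$, and finally $\Theta_n-\Theta_n^{(l)}\to0$ super-exponentially by yet another exponential-martingale computation. Your single ``continuity'' step is meant to replace all of this, and it does not go through.

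Second, your joint-convergence argument via ``disjoint blocks of data'' is incorrect: $\Theta_n$ depends on $\hat A_n$, which is a functional of $Q_n(0)$, $A_n$, $\{\eta_i^{(0)}\}$ and $\{\eta_i\}$ simultaneously; likewise $X_n^{(0)}$ carries $Q_n(0)\wedge n$ in its summation limit. The components are not built from independent pieces. The paper resolves this by first proving the super-exponential convergences $\hat A_n(t)/n\to\mu t$ and $Q_n(0)/n\to1$ (Lemma~\ref{le:lln} and \eqref{eq:72'}), then invoking the LD Slutsky theorem (Theorem~\ref{the:sl}) so that the random time-change and random summation index become deterministic in the limit and the coupling disappears (Lemma~\ref{le:sovm}). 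Note also that establishing $\hat A_n(t)/n\to\mu t$ itself requires a priori asymptotic boundedness of $\Theta_n$ (see \eqref{eq:13}, resting on \eqref{eq:23} and \eqref{eq:23a}, which in turn use only the crude upper bound \eqref{eq:29} on $\hat A_n$), so one cannot simply ``contract with the inversion map'' from the LDP for $Y_n$: there is a bootstrap that your outline elides.
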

 The LD convergence of $H_n$ and $X_n^{(0)}$ to their respective idempotent
counterparts, $H$ and $X^{(0)}$\,, is fairly straightforward. 
The proof of the LD convergence of $\Theta_n$ to $\Theta$ is more involved.
The key is   LD convergence of $U_n$ to $U$\,. Since $U_n$ is,
essentially, a time--changed sequential empirical process, one needs a moderate
deviation analogue of the weak convergence of the sequential
empirical process to the Kiefer process, see, e.g., Krichagina and
Puhalskii \cite{kripuh97}. That result is contained in
Lemma \ref{le:kiefer} and may be of interest in its own right.
 Another  important element of the proof
is super--exponential convergence in probability of $\hat A_n(t)/n$ to
$\mu t$\,, which is the subject of Lemma \ref{le:lln}.
The process $\Theta_n$ is then  written as 
\begin{equation}
    \label{eq:41}
\Theta_n(t)=J_n(t)-M_n(t)\,,
\end{equation}
where
\begin{align}
        \label{eq:17}
J_n(t)&=    \int_0^t \frac{U_n(t-x,x)}{1-F(x)}\,dF(x)
\end{align}
and $M_n=(M_n(t)\,, t\in\R_+)$ is a martingale with respect to a
suitable filtration.
By the continuous mapping principle, with  $U_n$ LD converging to $U$\,, 
  $J_n$ LD converges to
$J=(J(t)\,, t\in\R_+)$\,, where
\begin{equation}
  \label{eq:20}
J(t)=\int_0^t \frac{U(t-x,x)}{1-F(x)}\,dF(x)\,.
\end{equation}
It is  proved also that the sequence  $M_n$ LD converges to
a certain idempotent process
$M$\,.
All the convergences holding jointly, 
the result follows.

The plan is being implemented next.
Let random variables  $\zeta_i$ be independent and uniform on $[0,1]$\,.
Define a centred and normalised sequential empirical process by
\begin{align}
      \label{Un}
  K_n(x,t)=\frac{1}{b_n\sqrt{n}}\,\sum_{i=1}^{\lfloor nt \rfloor}
\bl(\ind{\zeta_i\le x}-x\br)\intertext{ and let}
\label{uon}
B_n(x,t)=\frac{1}{b_n\sqrt n}\sum_{i=1}^{\lfloor nt\rfloor}
\bl(\ind{\zeta_i\le x}-\int_0^{x\wedge \zeta_i}\frac{dy}{1-y}\br)\,,
\end{align}
where $x\in[0,1]$\,.
It is a simple matter to check that
\beq{sm}
K_n(x,t)=-\int_0^x
\frac{K_n(y,t)}{1-y}\,dy+B_n(x,t)\,,\;\;t\in\R_+\,,x\in[0,1]\,.
\eeq
Let $K_n=\bl((K_n(x,t),\,x\in[0,1]),t\in\R_+\br)$
and $B_n=\bl((B_n(x,t),\,x\in[0,1]),t\in\R_+\br)$\,.
Let  $K=((K(x,t)\,,x\in[0,1])\,,t\in\R_+)$ represent a Kiefer idempotent
 process
and let  $B=((B(x,t)\,,x\in[0,1])\,,t\in\R_+)$ represent a Brownian sheet
idempotent process, both being canonical coordinate processes on 
$\D(\R_+,\D([0,1],\R^2))$ endowed with deviability $\Pi$ such that
\begin{equation}
  \label{eq:81}
  K(x,t)=-\int_0^x
\frac{K(y,t)}{1-y}\,dy+B(x,t)\,,\;\;t\in\R_+\,,x\in[0,1]\,.
\end{equation}
(That  the solution of \eqref{eq:81} is a Kiefer idempotent process
follows from Lemma \ref{le:idemp_prop}.)

\begin{lemma}
  \label{le:kiefer}
If the convergence in \eqref{eq:3} holds, then 
the sequence of  processes $(K_n,B_n)$
  LD converges at rate $b_n^2$ 
 in $\D(\R_+,\D([0,1],\R^2))$ to the idempotent processes $(K,B)$\,. 
\end{lemma}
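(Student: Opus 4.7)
The plan is to prove exponential tightness of $(K_n,B_n)$ in $\D(\R_+,\D([0,1],\R^2))$ at rate $b_n^2$ and to identify all subsequential LD limits uniquely; Theorem (P) then yields LD convergence to $(K,B)$. Equation \eqref{sm} is a linear Volterra equation in $x$ for each fixed $t$, whose solution map $B_n(\cdot,t)\mapsto K_n(\cdot,t)$ is given (by the integrating factor $1/(1-x)$) by
\begin{equation*}
K_n(x,t)=(1-x)\int_0^x \frac{dB_n(y,t)}{1-y},
\end{equation*}
and \eqref{eq:81} expresses the limit in exactly the same form. Hence the joint LD convergence of $(K_n,B_n)$ reduces, via the continuous mapping principle for LD convergence (see Appendix \ref{sec:large-devi-conv}), to the LD convergence of $B_n$ to $B$, and the task is to establish the latter.

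For $B_n$ the key input is a martingale structure. For each $i$, the process
\begin{equation*}
N_i(x)=\ind{\zeta_i\le x}-\int_0^{x\wedge\zeta_i}\frac{dy}{1-y},\qquad x\in[0,1],
\end{equation*}
is a compensated indicator, since the Doob--Meyer compensator of $\ind{\zeta_i\le x}$ is exactly $\int_0^{x\wedge\zeta_i}dy/(1-y)$; hence for each $t$, $b_n\sqrt{n}\,B_n(\cdot,t)$ is a purely discontinuous martingale in $x$ with jumps bounded by $1$, while in $t$ the process $B_n(x,\cdot)$ has independent increments. I would exploit these properties together with exponential martingale inequalities of Bennett--Freedman type to obtain the modulus-of-continuity estimates needed for exponential tightness. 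Condition \eqref{eq:3} enters here to ensure that the jump size $1/(b_n\sqrt{n})$ is small enough for the exponential bounds to be applicable at rate $b_n^2$ and for the discreteness of the empirical process to be invisible on that scale.

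The finite-dimensional LD limits are identified by expanding
\begin{equation*}
\frac{1}{b_n^2}\ln E\exp\Bigl(b_n^2\sum_j\alpha_j B_n(x_j,t_j)\Bigr)
\end{equation*}
to second order. A Taylor expansion of the log--Laplace transform of a single i.i.d.\ summand and summation over the $\lfloor nt\rfloor$ summands yields, under \eqref{eq:3}, the quadratic form $(1/2)\sum_{j,\ell}\alpha_j\alpha_\ell\min(x_j,x_\ell)\min(t_j,t_\ell)$, which is exactly the log--Laplace functional characterising the Brownian sheet idempotent process $B$. The corresponding identification of $K$ as the Kiefer idempotent process is then automatic from \eqref{eq:81} and the parenthetical remark following it.

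I expect the principal obstacle to be exponential tightness uniformly in $x\in[0,1]$. The compensator $\int_0^{x\wedge\zeta_i}dy/(1-y)$ is unbounded as $x\to 1$, so the exponential martingale inequalities near the right endpoint require careful truncation and a separate argument for the boundary layer. A secondary difficulty is combining the exponential moduli in $x$ and in $t$ into a joint modulus estimate in the double Skorohod space $\D(\R_+,\D([0,1],\R^2))$, which calls for a two-parameter chaining argument rather than decoupled one-parameter estimates.
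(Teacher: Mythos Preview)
Your overall strategy---prove exponential tightness, identify subsequential limits, invoke Theorem (P)---matches the paper's, as does your recognition that the martingale structure of $B_n(\cdot,t)$ in $x$ drives the argument. Two points deserve comment.

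The reduction of $(K_n,B_n)$ to $B_n$ via continuous mapping is valid, but the continuity of the Volterra solution map $b\mapsto k$ on all of $[0,1]$ is not obvious: the kernel $1/(1-y)$ blows up at $y=1$. One can check that $k(x)=b(x)-(1-x)\int_0^x b(y)(1-y)^{-2}\,dy$ is Lipschitz in sup norm with constant $2$, so your route does go through once this is established; but you have not justified it. The paper takes a different path: it exploits the distributional symmetry $(K_n(x,t))_{x\in[0,1]}\stackrel{d}{=}(-K_n(1-x,t))_{x\in[0,1]}$, which reduces all $x$-variable estimates to $x\in[0,1/2]$ where Gronwall--Bellman applies directly (see \eqref{eq:51} and the argument around \eqref{eq:129}--\eqref{eq:126'}). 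For finite-dimensional identification the paper uses convergence of the predictable quadratic variation of $B_n(\cdot,t)$ together with a general martingale LD convergence theorem (Theorem~5.4.4 of \cite{Puh01}) rather than a direct Taylor expansion of log-Laplace transforms; either device is workable.

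The genuine gap is in the $t$-direction modulus. To control $\sup_{x\in[0,1]}|B_n(x,s+\delta)-B_n(x,s)|$ at scale $b_n^2$, the natural move is Doob's inequality in $x$, which reduces to $x=1$. But $B_n(1,s+\delta)-B_n(1,s)$ is a normalised sum of the i.i.d.\ variables $1+\ln(1-\zeta_i)$, and these are \emph{unbounded}; Bennett--Freedman inequalities presuppose bounded increments and do not apply here. The paper instead combines Doob's $L^p$ inequality in $t$ with a Rosenthal-type $2b_n^2$-th moment bound (Petrov's inequality, see \eqref{eq:98}). It is precisely in this moment estimate---where $\bl(E|1+\ln\zeta_1|^{2b_n^2}\br)^{1/b_n^2}$ grows like $b_n^4$---that condition \eqref{eq:3} is actually needed, not merely to control the jump size $1/(b_n\sqrt n)$, which tends to zero as soon as $b_n\to\infty$.
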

\begin{proof}
  The proof is modelled on the proof of Lemma 3.1 in Krichagina and 
Puhalskii \cite{kripuh97}, see also 
 Jacod and Shiryaev \cite[Chapter IX, \S 4c]{jacshir}.
Given $t\in\R_+$\,, 
let $K_n(t)=(K_n(x,t),\,x\in[0,1])\in\D([0,1],\R_+)$\,,
$B_n(t)=(B_n(x,t),\,x\in[0,1])\in\D([0,1],\R_+)$\,,
$K(t)=(K(x,t),\,x\in[0,1])\in\D([0,1],\R_+)$ 
and $B(t)=(B(x,t),\,x\in[0,1])\in\D([0,1],\R_+)\,.$ 
It is proved, first, that, for $0\le t_1<t_2<\ldots<t_k$,
the sequence
$(K_n(t_1),B_n(t_1)),(K_n(t_2),B_n(t_2)),\ldots,(K_n(t_k),B_n(t_k))$ 
LD converges 
to
$(K(t_1),B(t_1)),(K(t_2),B(t_2)),\ldots,(K(t_k),B(t_k)))$ 
 in $\D([0,1],\R^2)^{k}$\,, as $n\to\infty$\,.
Since both the stochastic processes
$((K_n(t),B_n(t))\,,t\in\R_+)$ and the idempotent
process $((K(t),B(t))\,,t\in\R_+)$ have independent increments in $t$\,,
it suffices to prove  convergence of one--dimensional
distributions, so,  $((K_n(x,t),B_n(x,t))\,,x\in[0,1])$
and $((K(x,t),B(x,t))\,,x\in[0,1])$ are worked with, with $t$ being held fixed.
The process $(B_n(x,t)\,,x\in[0,1])$
is a  martingale  with respect to the natural filtration
with the jump measure
\[
  \mu^{n,B}([0,x],\Gamma)=\ind{1/(b_n\sqrt{n})\in\Gamma}
\sum_{i=1}^{\lfloor nt\rfloor}
\ind{\zeta_i\le x}\,,
\]
 the predictable jump measure
\[
  \nu^{n,B}([0,x],\Gamma)=\ind{1/(b_n\sqrt{n})\in\Gamma}
\sum_{i=1}^{\lfloor nt\rfloor}
\int_0^{x\wedge \zeta_i}\frac{dy}{1-y}
\]
and  the predictable quadratic variation process 
\begin{multline}
  \label{eq:53}
    \langle B_n\rangle (x,t)=\int_0^x\int_\R u^2\nu^{n,B}(dy,du)=
\frac{1}{b_n^2n}\,\nu^{n,B}([0,x],\{1/(b_n\sqrt{n})\})=
\frac{1}{b_n^2n}\,\sum_{i=1}^{\lfloor nt\rfloor}
\int_0^{x\wedge \zeta_i}\frac{dy}{1-y}\\=
\frac{\lfloor nt\rfloor}{b_n^2n}\,x+
\frac{1}{b_n\sqrt{n}}\,K_n(x,t)-\frac{1}{b_n\sqrt{n}}\,B_n(x,t)\,,
\end{multline}
where $\Gamma\subset\R\setminus\{0\}$\,.
It is shown next that
\begin{equation}
  \label{eq:48}
  \lim_{r\to\infty}\limsup_{n\to\infty}P(
\sup_{x\in[0,1]}\abs{B_n(x,t)}>r)^{1/b_n^2}=0\,.
\end{equation}
An exponential martingale device is applied, which will be used
repeatedly.
Since,  the process 
\begin{multline*}
   \exp\bl( b_n^2 B_n(x,t)-
\int_0^x\int_\R(
e^{ b_n^2 u}
-1- b_n^2u)\nu^{n,B}(dy,du)\br)\\
=   \exp\bl( b_n^2 B_n(x,t)-(
e^{
  b_n/\sqrt{n}}
-1- \frac{b_n}{\sqrt{n}})\sum_{i=1}^{\lfloor nt\rfloor}
\int_0^{x\wedge \zeta_i}\frac{dy}{1-y}\br)
\end{multline*}
is a local martingale with respect to $x$\,, see, e.g., Lemma 4.1.1 on p.294 in
Puhalskii \cite{Puh01},  for any stopping time $\tau$\,,
\begin{equation}
  \label{eq:54}
    E\exp\bl( b_n^2 B_n(\tau,t)-(
e^{
  b_n/\sqrt{n}}
-1- \frac{b_n}{\sqrt{n}})\sum_{i=1}^{\lfloor n t\rfloor}
\int_0^{\tau\wedge \zeta_i}\frac{dy}{1-y}\br)\le 1\,.
\end{equation}
Lemma 3.2.6 on p.282 in Puhalskii \cite{Puh01} implies that, for
 $r>0$ and $\gamma>0$\,, 
\begin{multline*}
    P(\sup_{x\in[0,1]}e^{ b_n^2  B_n(x,t)}\ge e^{
  b_n^2 r})
\le e^{ b_n^2(\gamma- r)}
+P\bl(\exp\bl((e^{
  b_n/\sqrt{n}}
-1- \frac{b_n}{\sqrt{n}})\sum_{i=1}^{\lfloor n t\rfloor}
\int_0^{ \zeta_i}\frac{dy}{1-y}\br)\ge e^{
  b_n^2\gamma}\br)
\\\le
e^{ b_n^2(\gamma- r)}
+ e^{-
  b_n^2\gamma}E\bl(\exp\bl((e^{
  b_n/\sqrt{n}}
-1- \frac{b_n}{\sqrt{n}})\sum_{i=1}^{\lfloor n t\rfloor}
\int_0^{ \zeta_i}\frac{dy}{1-y}\br)\br)\\
\\=e^{ b_n^2(\gamma- r)}
+ e^{-
  b_n^2\gamma}\bl(1-(e^{
  b_n/\sqrt{n}}
-1- b_n/\sqrt{n})\br)^{-\lfloor nt\rfloor} \,,
\end{multline*}
with the latter equality holding for all $n$ great enough because
$e^{
  b_n/\sqrt{n}}
-1- b_n/\sqrt{n}\to0$\,.
Hence,  assuming that $e^{
  b_n/\sqrt{n}}
-1- b_n/\sqrt{n}\le1/2$\,,
\[
  P(\sup_{x\in[0,1]}e^{ b_n^2  B_n(x,t)}\ge e^{
  b_n^2 r})^{1/b_n^2}
\le e^{ (\gamma- r)}
+e^{-
  \gamma}
2^{\lfloor nt\rfloor/b_n^2}\,.
\]
Since $n/b_n^2\to\infty$\,, it follows that
\begin{equation}
  \label{eq:49}
  \lim_{ r\to\infty}\limsup_{n\to\infty}  P(\sup_{x\in[0,1]}  B_n(x,t)\ge 
   r)^{1/b_n^2}=0\,.
\end{equation}
A similar convergence
holds with $-B_n(x,t)$ substituted for $B_n(x,t)$\,. 
The limit in \eqref{eq:48} has been proved.

It is proved next that
\begin{equation}
  \label{eq:43}
  \lim_{r\to\infty}\limsup_{n\to\infty}P(
\sup_{x\in[0,1]}\abs{K_n(x,t)}>r)^{1/b_n^2}=0\,.
\end{equation}
Since owing to \eqref{Un}, $(K_n(x,t),x\in[0,1])$ is distributed as 
$(-K_n(1-x,t),
x\in[0,1])$ \,, 
 it suffices to prove that
\begin{align}
  \label{eq:50}
\lim_{r\to\infty}\limsup_{n\to\infty}
P(
\sup_{x\in[0,1/2]}\abs{K_n(x,t)}>r)^{1/b_n^2}=0\,.
\end{align}
By \eqref{sm} and the Gronwall--Bellman inequality,
\begin{equation}
  \label{eq:51}
  \sup_{x\in[0,1/2]}\abs{K_n(x,t)}\le e^2\sup_{x\in[0,1/2]}\abs{B_n(x,t)}
\end{equation}
so that \eqref{eq:43} holds too.

By \eqref{eq:53}, \eqref{eq:43} and \eqref{eq:48}, for $x\in[0,1]$\,,
\begin{equation}
  \label{eq:140}
    \lim_{n\to\infty}P(\abs{b_n^2\langle B_n\rangle
    (x,t)-tx}>\epsilon)^{1/b_n^2}=0\,. 
\end{equation}
If extended past $x=1$ by letting $B_n(x,t)=B_n(1,t)$\,, 
the process $(B_n(x,t)\,,x\in\R_+)$ is a square integrable
martingale with predictable quadratic variation process
$(\langle B_n\rangle(x\wedge1,t)\,, x\in\R_+)$\,, so,
 by \eqref{eq:140} and
Theorem 5.4.4 on p.423 in Puhalskii \cite{Puh01}, where one takes
$\beta_\phi=b_n\sqrt{n}$\,, $\alpha_\phi=n$ and $r_\phi=b_n^2$\,, the 
sequence of the extended processes $B_n(t)$ LD converges in $\D(\R_+,\R)$
 to the idempotent process
$(B(x\wedge1,t)\,,x\in[0,1])$\,.
By \eqref{sm} and the continuous mapping principle,
for $0\le x_1\le\ldots\le x_l<1$\,, the
$((K_n(x_i,t),B_n(x_i,t))\,,i\in\{0,1,2,\ldots,l\})$ LD 
converge in $\D(\R_+,\R^l)$ to 
$((K(x_i,t),B(x_i,t))\,,i\in\{0,1,2,\ldots,l\})$\,.
Since $K_n(1,t)=0$ and $K(1,t)=0$ (see Appendix \ref{sec:large-devi-conv})\,, 
the latter convergence also holds if $x_l=1$\,.

It is shown now that the sequence 
$(K_n(x,t)\,,x\in[0,1])$ is $\bbC$--exponentially
tight of order $b_n^2$ in $\D([0,1],\R)$\,. (The definition and
basic properties of $\bbC$--exponential tightness are reviewed in 
Appendix \ref{sec:large-devi-conv}.)
 By  Theorem \ref{the:exp_tight}, the convergence in \eqref{eq:43}
needs to be complemented with
\begin{equation}
  \label{eq:127a}
  \lim_{\delta\ra0}\limsup_{n\to\infty}\sup_{x\in[0,1]}
P(\sup_{y\in[0, \delta]}|K_n(x+y,t)-K_n(x,t)|\ge \eta)^{1/b_n^2}=0\,,
\end{equation}
for arbitrary $\eta>0$\,, where $K_n(x,t)=0$ when $x\ge1$\,.
A similar argument is used as in the proof of \eqref{eq:43}. Defining
$\overline{K}_n(x,t)=-K_n(1-x,t)$\,, for $x\in[0,1]$\,, and 
$\overline{K}_n(x,t)=0$ for $x\ge 1$\,,
implies, by \nr{Un}, that
\begin{multline}
  \label{eq:129}
  \sup_{x\in[0,1]}
P(\sup_{y\in[0, \delta]}|K_n(x+y,t)-K_n(x,t)|\ge \eta)\le
 \sup_{x\in[0,1/2]}
P(\sup_{y\in[0, \delta]}|K_n( x+y,t)-K_n(x,t)|\ge \eta)\\+
\sup_{x\in[1/2,1-\delta]}
P(\sup_{y\in[0, \delta]}|K_n(x+y,t)- K_n(x,t)|\ge \eta)
+\sup_{x\in[1-\delta,1]}
P(\sup_{y\in[0, \delta]}|K_n(x+y,t)- K_n(x,t)|\ge \eta)\\
\le\sup_{x\in[0,1/2]}
P(\sup_{y\in[0, \delta]}|K_n( x+y,t)-K_n(x,t)|\ge \eta)+
\sup_{x\in[\delta,1/2]}
P(\sup_{y\in[0, \delta]}|K_n(1-x+y,t)- K_n(1-x,t)|\ge \eta)\\
+
P(\sup_{u\in[1- \delta,1]}|K_n(u,t)|\ge \eta/2)
\le\sup_{x\in[0,1/2]}
P(\sup_{y\in[0, \delta]}|K_n( x+y,t)-K_n(x,t)|\ge \eta)\\+
\sup_{x\in[\delta,1/2]}
P(\sup_{y\in[0, \delta]}|\overline K_n(x-y,t)-\overline K_n(x,t)|\ge \eta)
+
P(\sup_{u\in[0, \delta]}|\overline K_n(u,t)|\ge \eta/2)\\
\le\sup_{x\in[0,1/2]}
P(\sup_{y\in[0, \delta]}|K_n( x+y,t)-K_n(x,t)|\ge \eta)+
\sup_{x\in[0,1/2]}
P(\sup_{y\in[0, \delta]}|\overline K_n(x+y,t)-\overline K_n(x,t)|\ge \eta)\\
+
P(\sup_{u\in[0, \delta]}|\overline K_n(u,t)|\ge \eta/2)\,.
\end{multline}
Since the random variables $\zeta_i$ are independent and  uniformly distributed
on $[0,1]$\,, $\overline{K}_n$ has the same
finite dimensional distributions as  $K_n$ so that
\begin{multline}
  \label{eq:126'}
    \sup_{x\in[0,1]}
P(\sup_{y\in[0, \delta]}|K_n(x+y,t)-K_n(x,t)|\ge \eta)\le
2\sup_{x\in[0,1/2]}
P(\sup_{y\in[0, \delta]}|K_n( x+y,t)-K_n(x,t)|\ge \eta)\\
+
P(\sup_{u\in[0, \delta]}| K_n(u,t)|\ge \eta/2)\,.
\end{multline}
Since $x+y\le 2/3$\,, when $x\in[0,1/2]$ and $y\in[0,\delta]$\,,
provided $\delta$ is small enough,
by \eqref{sm}, for $x\in[0,1/2]$ and $\delta$ small enough,
\begin{equation}
  \label{eq:130'}
\sup_{y\in[0,\delta]}\abs{  K_n(x+y,t)-K_n(x,t)}\le3\delta
\sup_{u\in[0,1]}\abs{K_n(u,t)}+\sup_{y\in[0,\delta]}\abs{B_n(x+y,t)-B_n(x,t)}\,.
\end{equation}
Similarly,
\begin{equation}
  \label{eq:132'}
  \sup_{x\in[0, \delta]}| K_n(x,t)|\le\frac{\delta}{1-\delta}\,
\sup_{x\in[0,1]}\abs{K_n(x,t)}+\sup_{x\in[0, \delta]}| B_n(x,t)|\,.
\end{equation}
By
 \eqref{eq:126'}, \eqref{eq:130'},
\eqref{eq:132'} and the fact that the $B_n$ LD converge to $B$\,,
\begin{multline}
  \label{eq:133}
\limsup_{n\to\infty}\sup_{x\in[0,1]}
P(\sup_{y\in[0, \delta]}|K_n(x+y,t)-K_n(x,t)|\ge \eta)^{1/b_n^2}  
\\
\le\limsup_{n\to\infty}P(3\delta
\sup_{u\in[0,1]}\abs{K_n(u,t)}\ge\eta/2)^{1/b_n^2}
+\limsup_{n\to\infty}P(\frac{\delta}{1-\delta}\,
\sup_{u\in[0,1]}\abs{K_n(u,t)}\ge\eta/4)^{1/b_n^2}\\
+\Pi(\sup_{x\in[0,1/2]}\sup_{y\in[0,\delta]}\abs{B(x+y,t)-B(x,t)}\ge \eta/2)+
\Pi(\sup_{u\in[0, \delta]}| B(u,t)|\ge\eta/4)\,.
\end{multline}
By \eqref{eq:43},
\begin{multline}
  \label{eq:135'}
  \lim_{\delta\to0}\limsup_{n\to\infty}\sup_{x\in[0,1]}
P(\sup_{y\in[0, \delta]}|K_n(x+y,t)-K_n(x,t)|\ge \eta)^{1/b_n^2}  \\
\le\lim_{\delta\to0} \Pi(\sup_{x\in[0,1/2]}\sup_{y\in[0,\delta]}\abs{B(x+y,t)-B(x,t)}\ge \eta/2)+
\lim_{\delta\to0}\Pi(\sup_{u\in[0, \delta]}| B(u,t)|\ge\eta/4)\,.
\end{multline}
Since the collections of sets
$\{b\in\bbC(\R_+,\bbC([0,1],\R)) :\,\sup_{x\in[0,1/2]}\sup_{y\in[0,\delta]}\abs{b(x+y,t)-b(x,t)}\ge
\eta/2\}$ and $\{b\in\bbC(\R_+,\bbC([0,1],\R)) :
\,\sup_{x\in[0,\delta]}\abs{b(x,t)}\ge \eta/4\}
$ are nested collections of closed sets as $\delta\downarrow0$\,, the limit on the right of
\eqref{eq:135'}, is (see Appendix \ref{sec:large-devi-conv})
\[
  \Pi(\sup_{x\in[0,1/2]}\sup_{y\in[0,0]}\abs{B(x+y,t)-B(x,t)}\ge \eta/2)+
\Pi(\sup_{u\in[0,0]}| B(u,t)|\ge\eta/4)=0\,,
\]
which concludes the proof of \eqref{eq:127a}.

Since the sequence $((K_n(x,t),B_n(x,t))\,,x\in[0,1])$ LD converges to
$((K(x,t),B(x,t))\,,x\in[0,1])$ in the sense of finite--dimensional
distributions and is $\bbC$--exponentially tight, the LD convergence
holds in $\D([0,1],\R^2)$\,, see Theorem \ref{the:fd}.
It has thus been proved that the sequence 
$\bl(((K_n(x,t_1),B_n(x,t_1)),\,x\in[0,1]),\ldots,
((K_n(x,t_l),B_n(x,t_l)),\,x\in[0,1])\br)$ LD converges
in $\D([0,1],\R^2)^l$  to
$\bl(((K(x,t_1),B(x,t_1)),\,x\in[0,1]),\ldots,
((K(x,t_l),B(x,t_l)),\,x\in[0,1])\br)$\,, for all $t_1\le t_2\le\ldots\le t_l$\,.
The proof of the lemma will be complete if the sequence
$\bl(((K_n(x,t),B_n(x,t)),x\in[0,1])\,,t\in\R_+\br)$ is
proved to be 
$\bbC$--exponentially tight of order $b_n^2$ in $\D(\R_+,\D([0,1],\R^2))$\,.
The definition of being exponentially tight implies that
it is sufficient to prove that each of the sequences
$\{K_n,n\ge1\}$
and $\{B_n,n\ge1\}$ is $\bbC$--exponentially tight 
of order $b_n^2$ in $\D(\R_+,\D([0,1],\R))$. 
By \eqref{eq:43} and
Theorem \ref{the:exp_tight}, the $\bbC$--exponential tightness of
$\{K_n,n\ge1\}$ would follow if,
for all $L>0$ and $\eta>0$\,,
\beray{T2}
\lim_{\delta\ra0}\limsup_{n\to\infty}\sup_{s\in[0,L]}
P(\sup_{t\in[0, \delta]}\sup_{x\in[0,1]}|K_n(x,s+t)-K_n(x,s)|\ge \eta)^{1/b_n^2}=0.
\end{eqnarray}
Since, in analogy with the reasoning in \eqref{eq:129},
\begin{multline*}
  P(\sup_{t\in[0, \delta]}\sup_{x\in[0,1]}|K_n(x,t+s)-K_n(x,s)|\ge \eta)\le
P(\sup_{t\in[0, \delta]}\sup_{x\in[0,1/2]}|K_n(x,t+s)-K_n(x,s)|\ge \eta)\\+
P(\sup_{t\in[0, \delta]}\sup_{x\in[1/2,1]}|K_n(x,t+s)-K_n(x,s)|\ge \eta)=
P(\sup_{t\in[0, \delta]}\sup_{x\in[0,1/2]}|K_n(x,t+s)-K_n(x,s)|\ge \eta)\\+
P(\sup_{t\in[0, \delta]}\sup_{x\in[0,1/2]}
|\overline{K}_n(x,t+s)-\overline{K}_n(x,s)|\ge \eta)=
2P(\sup_{t\in[0, \delta]}\sup_{x\in[0,1/2]}|K_n(x,t+s)-K_n(x,s)|\ge \eta)\,,
\end{multline*}
 \nr{T2} is implied by 
\beray{T2'2}
\lim_{\delta\ra0}\limsup_n\sup_{s\in[0,L]}
P(\sup_{t\in[0, \delta]}\sup_{x\in[0,1/2]}|K_n(x,t+s)-K_n(x,s)|\ge \eta)^{1/b_n^2}=0.
\end{eqnarray}

With $x$ being held fixed, 
the process $(K_n(x,t+s)-K_n(x,s),t\in\R_+)$ is a locally square--integrable
martingale,
so $(\sup_{x\in[0,1/2]}
(K_n(x,t+s)-K_n(x,s)),t\in\R_+)$, is a submartingale, 
hence by Doob's inequality, Liptser and Shiryaev 
\cite[Theorem 3.2, p.60]{LipShi01},
\beray{T21}
P(\sup_{t\in[0, \delta]}\sup_{x\in[0,1/2]}|K_n(x,t+s)-K_n(x,s)|\ge \eta)
\le\frac{1}{\eta^{2b_n^2}}\,
E\sup_{x\in[0,1/2]}(K_n(x,s+\delta)-K_n(x,s))^{2b_n^2}.
\end{eqnarray}
As noted earlier, with $t$ being held fixed, the process
 $(B_n(x,t),x\in[0,1])$ is a square--integrable
martingale,
Jacod and Shiryaev \cite[Chapter II, \S 3c]{jacshir}. Equation \nr{sm}
yields, by the Gronwall--Bellman  inequality, as in \eqref{eq:51},
\beray{e2}
\sup_{x\in[0,1/2]}|K_n(x,s+\delta)-K_n(x,s)|\le
e^2\sup_{x\in[0,1]}|B_n(x,s+\delta)-B_n(x,s)|\,.
\end{eqnarray}
Since  $(B_n(x,s+\delta)-B_n(x,s),
x\in[0,1])$ is a square--integrable martingale,
by another application of Doob's inequality,
Jacod and Shiryaev \cite[Theorem I.1.43]{jacshir}, 
Liptser and Shiryaev \cite[Theorem I.9.2]{lipshir}, as well as
by Jensen's inequality,
\beray{d2}
E\sup_{x\in[0,1]}(B_n(x,s+\delta)-B_n(x,s))^{2b_n^2}
\le \Bl(\frac{2b_n^2}{2b_n^2-1}\Br)^{2b_n^2}
E(B_n(1,s+\delta)-B_n(1,s))^{2b_n^2}\,.
\end{eqnarray}
By \nr{uon} and the bound  (5.6) in the proof of Theorem 19, Chapter
III, \S5 in Petrov \cite{Pet87},
\begin{multline}
  \label{eq:98}
  E(B_n(1,s+\delta)-B_n(1,s))^{2b_n^2}\le
(b_n\sqrt{n})^{-2b_n^2}\bl((b_n^2+1)^{2b_n^2} n\delta E(1+\ln\zeta_1)^{2b_n^2}\\+
2b_n^2(b_n^2+1)^{b_n^2}e^{2b_n^2}
  (n\delta)^{b_n^2}\bl(
E(1+\ln\zeta_1)^2\br)^{b_n^2}\br)
\end{multline}
so that
\begin{multline*}
\limsup_{n\to\infty}(
E(B_n(1,s+\delta)-B_n(1,s))^{2b_n^2})^{1/b_n^2}\le
\limsup_{n\to\infty}
(b_n\sqrt{n})^{-2}\bl((b_n^2+1)^{2}n^{1/b_n^2} (E(1+\ln\zeta_1)^{2b_n^2})^{1/b_n^2}\\+
(b_n^2+1)e^{2}
  n\delta
E(1+\ln\zeta_1)^2\br)
\le \limsup_{n\to\infty}
(b_n\sqrt{n})^{-2}\bl((b_n^2+1)^{2}\bl((2b_n^2)!\br)^{1/b_n^2}+
(b_n^2+1)e^{2}
  n\delta
E(1+\ln\zeta_1)^2\br)
\end{multline*}
which implies, on recalling that 
$b_n^3n^{1/b_n^2-1/2}\to0$\,, that
\[
\lim_{\delta\to0}\limsup_{n\to\infty}
\sup_{s\in[0,L]}\Bl(E(B_n(1,s+\delta)-B_n(1,s))^{2b_n^2}\Br)^{1/b_n^2}=0\,.
\]
Recalling  \nr{T21}, \nr{e2} and \nr{d2} obtains \nr{T2'2}.
The proof of the $\bbC$--exponential tightness of $B_n$ is similar. (It
is actually simpler.)
Finally, \eqref{eq:81} is obtained by letting $n\to\infty$ in \eqref{sm}.

\end{proof}
The proof of Theorem \ref{le:L} is in order now, so,
the hypotheses of Theorem \ref{the:heavy_traffic} are assumed.
Since the $F(\eta_i)$ are  iid and uniform on $[0,1]$\,,
in view of  \eqref{eq:33} and \eqref{Un},
 it may be
assumed that
\begin{equation}
  \label{eq:80}
  U_n(x,t)=K_n(F(x),\frac{\hat A_n(t)}{n})\,.
\end{equation}
On recalling  \eqref{eq:26},
thanks to the large deviation version of Slutsky's theorem, see
Theorem \ref{the:sl}, the LD convergence of
 $U_n$             to $U$
in $\D(\R_+,\D(\R_+,\R))$
 is a consequence  of Lemma \ref{le:kiefer} and the following result. 
\begin{lemma}
\label{le:lln}
For arbitrary $\epsilon>0$ and $t>0$\,,
  \[
\lim_{n\to\infty}P\bl(
\sup_{s\in[0,t]}\abs{\frac{\hat A_n(s)}{n}-\mu s}>\epsilon\br)^{1/b_n^2}=0\,.
\]
\end{lemma}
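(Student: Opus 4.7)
The plan is to use \eqref{eq:40} to rewrite
\begin{equation*}
  \frac{\hat A_n(s)}{n} - \mu s
  = \Bigl(\frac{A_n(s)}{n}-\mu\rho_n s\Bigr)
  + \mu(\rho_n-1)s
  + \frac{(Q_n(0)-n)^+}{n}
  - \frac{(Q_n(s)-n)^+}{n}
\end{equation*}
and to verify that each of the four summands tends to zero super-exponentially uniformly in $s\le t$ at rate $b_n^2$. The first summand is super-exponentially $O(b_n/\sqrt{n})=o(1)$: by the LD hypothesis on $A_n$, $\sqrt{n}/b_n\,(A_n(\cdot)/n-\mu\rho_n\,\cdot\,)$ is exponentially tight of order $b_n^2$ in $\D(\R_+,\R)$, and $b_n/\sqrt{n}\to 0$. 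The second is deterministic of order $b_n/\sqrt{n}$ by \eqref{eq:105}. The third is bounded by $(b_n/\sqrt{n})\abs{X_n(0)}$, super-exponentially $O(b_n/\sqrt{n})$ by the LD hypothesis on $X_n(0)$. The remaining task is the super-exponential LLN $\sup_{s\le t}(Q_n(s)-n)^+/n\to 0$ at rate $b_n^2$.

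To handle this, I would couple $(Q_n)$ with a saturated auxiliary system on $[0,t]$ in which all $n$ servers are kept continuously busy: server $i$ first runs through its initial residual $\eta^{(0)}_i$ and thereafter processes services $\eta_j$ drawn in order from the original sequence. Let $\tilde D_n(s)$ count the service completions of this virtual system; $\tilde D_n$ is then a superposition of $n$ i.i.d.\ stationary renewal counting processes of intensity $\mu$, so under the moment hypotheses in the remark following Theorem~\ref{the:ldp}, a standard moderate-deviation estimate for renewal superpositions (cf.\ Puhalskii \cite{Puh99a}) yields $\sup_{s\le t}|\tilde D_n(s)/n-\mu s|\to 0$ super-exponentially at rate $b_n^2$. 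The natural pathwise coupling in which each real server uses the same service times as its saturated counterpart, but may idle between assignments, gives $D_n(s)\le\tilde D_n(s)$ and bounds the shortfall $\tilde D_n(s)-D_n(s)$ in terms of the cumulative idle-server time $\int_0^s(n-Q_n(u)\wedge n)\,du$.

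Plugging the coupling bound into the balance equation $Q_n(s)=Q_n(0)+A_n(s)-D_n(s)$ produces an upper bound on $(Q_n(s)-n)^+$ whose terms are all super-exponentially $o(n)$ except for the idle-time correction. The latter is handled by a self-bootstrap on a finite grid $0<h<2h<\ldots<t$: the crude monotone inequality $Q_n(s)\le Q_n(0)+A_n(s)$ gives a preliminary super-exponential cap on $Q_n/n$; then, conditional on super-exponential control of $(Q_n(kh)-n)^+/n$, an exponential-martingale estimate for the centred arrival process and the service-completion point process on $[kh,(k+1)h]$, in the spirit of the computations with $B_n$ following \eqref{eq:54}, shows that $(Q_n(s)-n)^+/n$ does not grow by more than $o(1)$ super-exponentially over each subinterval provided $h$ is small. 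Iterating finitely many times on $[0,t]$ closes the induction and delivers the LLN, whence the lemma.

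The principal technical obstacle is this exponential-martingale control of the idle-time correction, since it involves the very quantity $(Q_n(s)-n)^+$ whose super-exponential smallness is being established; the circularity must be broken by the short-interval bootstrap above. The induction step rests on an exponential Markov inequality applied to the local martingale associated with the compensated jump measure of $A_n$ together with the conditional intensity of service completions in the superposed renewal structure, along the lines of the computations used for $B_n$ in the proof of Lemma~\ref{le:kiefer}.
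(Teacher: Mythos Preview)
Your decomposition via \eqref{eq:40} is exactly what the paper uses in its final step, and the first three summands are handled just as you say. The entire difficulty is indeed the super-exponential control of $(Q_n(s)-n)^+/n$, and here your sketch has real gaps.

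First, the coupling with a saturated system is not well-defined in this model: the service times $\eta_1,\eta_2,\ldots$ are assigned \emph{in order of entry into service}, not server-by-server, so there is no ``natural pathwise coupling in which each real server uses the same service times as its saturated counterpart''. Second, even granting such a coupling, the claim that the shortfall $\tilde D_n(s)-D_n(s)$ is bounded by a functional of the cumulative idle time is asserted but not proved; translating idle time into missed departures is itself a random-time-change problem requiring its own estimate. Third, you justify the super-exponential LLN for $\tilde D_n$ by invoking the moment hypotheses of the Remark after Theorem~\ref{the:ldp} and a moderate-deviation result from \cite{Puh99a}; but that Remark concerns the \emph{interarrival} times, not the service times, and in any case a moderate-deviation estimate is not the same object as a super-exponential LLN. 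The paper assumes nothing about $F$ beyond continuity, $F(0)=0$, $F(x)<1$, and finite mean. Finally, the bootstrap on short intervals is too vague to assess: you need to control $(Q_n-n)^+$, but the idle-time correction involves $(Q_n-n)^-$, so the induction step as described does not close without a separate lower bound on $Q_n$, which you do not supply.

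The paper avoids all of this circularity by a different route. It first establishes, \emph{before} the lemma, the ``asymptotic boundedness'' estimates \eqref{eq:23} and \eqref{eq:23a} for $U_n$ and the martingale $M_n$, using only the crude bound $\hat A_n(t)\le (Q_n(0)-n)^++A_n(t)$ from \eqref{eq:40}; these require no LLN for $\hat A_n$, only boundedness of $\hat A_n/n$. From \eqref{eq:41} and \eqref{eq:17} this gives super-exponential boundedness of $\Theta_n$. Then the exact representation \eqref{eq:P6} shows that $Q_n(t)/n-1$ satisfies a nonlinear renewal equation of the form $g(t)=\int_0^t g(t-s)^+\,dF(s)+\theta_n(t)$ with $\theta_n$ super-exponentially small, and the contraction estimate of Lemma~B.1 in \cite{PuhRee10} delivers \eqref{eq:77} directly. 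The lemma then follows from \eqref{eq:40} and the LLN for $A_n$. This argument uses no moment hypotheses on $F$ beyond finite mean and no coupling construction.
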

First, the groundwork is laid and gaps left in the earlier presentation
are filled in.
Let 
\begin{equation}
  \label{eq:P9}
  L_n(x,t)=\frac{1}{b_n\sqrt{n}}\,\sum_{i=1}^{{{\hat A}_n}(t)}
\Bl(\ind{\eta_i\le x}-\int_0^{\eta_i\wedge x}
\frac{dF(u)}{1-F(u)}\Br)\,.
\end{equation}
By  \eqref{uon}, it may be assumed that 
\begin{equation}
  \label{eq:7}
L_n(x,t)=B_n(F(x),\frac{\hat A_n(t)}{n})\,.
\end{equation}
  By (\ref{eq:33}),
\begin{equation}
  \label{eq:P10}
  U_n(x,t)=-\int_0^x\frac{U_n(y,t )}{1-F(y)}\,dF(y)+L_n(x,t)
\end{equation}
so that, owing to \eqref{eq:19}, 
 \eqref{eq:41} holds with $J_n(t)$ defined by \eqref{eq:17} and with
\begin{align}
  \label{eq:P18}
  M_{n}(t)&=\int_{\R_+^2} \ind{s+x\le t}\,dL_n(x,s)
\,.\end{align}

Two results on what could be called
''asymptotic boundedness in
probability of order $b_n^2$'' are proved next.
More specifically, it is proved that, for $t>0$\,,
\begin{align}
    \label{eq:23}
    \lim_{r\to\infty}\limsup_{n\to\infty}
P(\sup_{s\in[0,t]}\abs{U_n(s,t-s)}>r)^{1/b_n^2}=0
 \\\intertext{and}
\label{eq:23a}    \lim_{r\to\infty}\limsup_{n\to\infty}
P(\sup_{s\in[0,t]}\abs{M_n(s)}>r)^{1/b_n^2}=0\,.
  \end{align}
  \begin{proof}[Proof of \eqref{eq:23}]
    Note that
\begin{equation}
    \label{eq:5}
\lim_{r\to\infty}    \limsup_{n\to\infty}P\bl(\frac{A_n(t)}{n}>r\br)^{1/b_n^2}=0\,,
  \end{equation}
which is a consequence of the LD convergence at rate $b_n^2$ of the
$Y_n$ to $Y$ (see \eqref{eq:46}).
Similarly, the LD convergence of  $X_n(0)$ to $X(0)$ implies that
\begin{equation}
  \label{eq:72}
\lim_{r\to\infty}  \limsup_{n\to\infty}
P\bl(\sup_{s\in[0,t]}\frac{Q_n(0)}{n}>r\br)^{1/b_n^2}=0\,.
\end{equation}
By \eqref{eq:40},
${{\hat A}_n}(t)\le(Q_n(0)-n)^++A_n(t)$
so that  \eqref{eq:5} and \eqref{eq:72} imply that
 \begin{equation}
   \label{eq:29}
\lim_{L\to\infty}       \limsup_{n\to\infty}P\bl(
\frac{\hat A_n(t)}{n}>L\br)^{1/b_n^2}=0\,.
 \end{equation}
By \eqref{eq:80},
\begin{equation}
  \label{eq:44}
  P(\sup_{s\in[0,t]}\abs{U_n(s,t-s)}>r)\le
P(\frac{\hat A_n(t)}{n}>L)+
P(\sup_{
\substack{s\in [0,L],\\x\in[0,1]}}\abs{K_n(x,s)}>r)\,.
\end{equation}
By the LD convergence of  $K_n$ to $K$  in Lemma
\ref{le:kiefer} and the trajectories of $K$ being continuous,
\[
  \lim_{r\to\infty}       \limsup_{n\to\infty}P\bl(
\sup_{
\substack{s\in [0,L],\\x\in[0,1]}}\abs{K_n(x,s)}>r)^{1/b_n^2}=0\,.
\]
Combined with \eqref{eq:29} and \eqref{eq:44}, this proves \eqref{eq:23}.
  \end{proof}

  \begin{proof}[Proof of \eqref{eq:23a}]
As follows from  Lemma 3.1 in Puhalskii and Reed \cite{PuhRee10},
the process $M_n=(M_n(t),\,t\in\R_+)$ 
 is a local martingale with respect to  filtration $\mathbf{G}_n$
 defined as follows.
For $t\in\R_+$, let 
$\hat{\mathcal{G}}_n(t)$ denote the complete  $\sigma$-algebra generated by
the random variables
$\ind{\hat\tau_{n,i}\le s}\ind{ \eta_i\le x}$, where $x+ s\le t$ and
$i\in\N$, and by the
${{\hat A}_n}(s)$ (or, equivalently, by the $\ind{\hat\tau_{n,i}\le s}$ for
$i\in\N$), where $s\leq t$.   Define
$\mathcal{G}_n(t)=\cap_{\epsilon>0}\hat{\mathcal{G}}_n(t+\epsilon)$
and $\mathbf{G}_n=(\mathcal{G}_n(t)\,, t\in\R_+)$\,.
By \eqref{eq:P9} and \eqref{eq:P18},
\begin{equation}
  \label{eq:9}
  M_n(t)=\frac{1}{b_n\sqrt{n}}\,\sum_{i=1}^{{{\hat A}_n}(t)}
\bl(\ind{\eta_i+\hat\tau_{n,i}\le t}-\int_0^{\eta_i\wedge (t-\hat\tau_{n,i})}
\frac{dF(u)}{1-F(u)}\br)\,.
\end{equation}
Thus, the measure of jumps of $M_n$ is
\begin{equation}
  \label{eq:18}
  \mu_n([0,t],\Gamma)=\ind{1/(b_n\sqrt{n})\in\Gamma}
\sum_{i=1}^{{{\hat A}_n}(t)}
\ind{\eta_i+\hat\tau_{n,i}\le t}
\end{equation}
and the associated $\mathbf{G}_n$--predictable measure of jumps is
\begin{equation}
  \label{eq:25}
  \nu_n([0,t],\Gamma)=\ind{1/(b_n\sqrt{n})\in\Gamma}
\sum_{i=1}^{{{\hat A}_n}(t)}\int_0^{\eta_i\wedge (t-\hat\tau_{n,i})}
\frac{dF(u)}{1-F(u)}\,.
\end{equation}
Note that it is a continuous process. (For $\hat A_n$ being
predictable, see Lemma C.1 in Puhalskii and Reed \cite{PuhRee10}.)
The associated stochastic cumulant  is (see, e.g., p.293 in 
Puhalskii \cite{Puh01})
\begin{equation}
  \label{eq:27}
  G_n(\alpha,t)=(e^{\alpha/(b_n\sqrt{n})}-1-\frac{\alpha}{b_n\sqrt{n}})
\sum_{i=1}^{{{\hat A}_n}(t)}\int_0^{\eta_i\wedge (t-\hat\tau_{n,i})}
\frac{dF(u)}{1-F(u)}\,.
\end{equation}
By Lemma 4.1.1 on p.294 in Puhalskii \cite{Puh01}, 
the process $(e^{\alpha
  M_n(t)-G_n(\alpha,t)}\,,t\in\R_+)$ is a local martingale so that
$E e^{\alpha
  M_n(\tau)-G_n(\alpha,\tau)}\le1$\,, for arbitrary stopping time
$\tau$\,.
Lemma 3.2.6 on p.282 in Puhalskii \cite{Puh01} implies that, for
  $\gamma>0$\,, 
\[
    P(\sup_{s\in[0,t]}e^{\alpha b_n^2  M_n(s)}\ge e^{\alpha
  b_n^2 r})
\le e^{\alpha b_n^2(\gamma- r)}
+P(e^{G_n(\alpha b_n^2,t)}\ge e^{\alpha
  b_n^2\gamma})
\le  e^{\alpha b_n^2(\gamma- r)}
+P(e^{\hat G_n(\alpha b_n^2,t)}\ge e^{\alpha
  b_n^2\gamma})\,,
\]
where 
\begin{equation}
  \label{eq:6}
    \hat G_n(\alpha,t)=(e^{\alpha/(b_n\sqrt{n})}-1-\frac{\alpha}{b_n\sqrt{n}})
\sum_{i=1}^{{{\hat A}_n}(t)}\int_0^{t-\hat\tau_{n,i}}
\frac{dF(u)}{1-F(u)}\,.
\end{equation}
Hence, for $\alpha>0$\,, 
\begin{equation}
  \label{eq:56}
  P(\sup_{s\in[0,t]} M_n(s)\ge r)^{1/b_n^2}\le
e^{\alpha(\gamma- r)}
+P(\hat G_n(\alpha b_n^2,t)\ge\alpha
  b_n^2\gamma)^{1/b_n^2}\,.
\end{equation}
On writing
\begin{multline}
  \label{eq:57}
      \hat G_n(\alpha b_n^2,t)=
(e^{\alpha b_n/\sqrt{n}}-1-\frac{\alpha b_n}{\sqrt{n}})
\int_0^t\int_0^{t-s}
\frac{dF(u)}{1-F(u)}\,d\hat A_n(s)\\
=(e^{\alpha b_n/\sqrt{n}}-1-\frac{\alpha b_n}{\sqrt{n}})
\int_0^t\hat A_n(t-u)\,\frac{dF(u)}{1-F(u)}
\end{multline}
and noting that $(n/b_n^2) (e^{\alpha b_n/\sqrt{n}}-1-\alpha
  b_n/\sqrt{n})\to
\alpha^2/2$\,,
one can see, thanks to \eqref{eq:29}, that
 \[
   \lim_{n\to\infty}P(\hat G_n(\alpha b_n^2,t)\ge\alpha
  b_n^2\gamma)^{1/b_n^2}=0\,,
 \]
 provided $\alpha$ is
small enough, which proves that
\[
    \lim_{r\to\infty}
\limsup_{n\to\infty}P(\sup_{s\in[0,t]}M_n(s)>r)^{1/b_n^2}=0\,.
\]
The argument for $\sup_{s\in[0,t]}(-M_n(s))$ is similar.
  \end{proof}

\begin{proof}[Proof of Lemma \ref{le:lln}]
  
By \eqref{eq:40}, (\ref{eq:107}), \eqref{eq:33} and \eqref{eq:19},
  \begin{multline}
    \label{eq:P6}
\frac{1}{n}\,Q_n(t)
=\bl(\frac{1}{n}\,Q_n(0)-1\br)^+(1-F(t))
+\frac{1}{n}\,Q_n^{(0)}(t)
+\frac{1}{n}\,A_n(t)-\frac{1}{n}\,
\int_{0}^tA_n(t-s)\,dF(s)\\
+\frac{1}{n}\,\int_0^t
(Q_n(t-s)-n)^+ \,dF(s)
+\frac{b_n}{\sqrt{n}}\,\Theta_n(t)\,.
\end{multline}
  By \eqref{eq:41}, \eqref{eq:17}, \eqref{eq:23} and \eqref{eq:23a},
 on recalling that 
$F(t)<1$\,, 
\begin{equation}
  \label{eq:13}
  \lim_{r\to\infty}\limsup_{n\to\infty}P(\sup_{s\in[0,t]}\abs{\Theta_n(s)}>r)^{1/b_n^2}=0\,.
\end{equation}
The LD convergence at rate $b_n^2$ of 
$Y_n$ to $Y$
implies that, for $\epsilon>0$\,,
\begin{equation}
  \label{eq:4}
      \lim_{n\to\infty}P\bl(\sup_{s\in[0,t]}\abs{\frac{A_n(s)}{n}-\mu
        s}>\epsilon\br)^{1/b_n^2}=0\,.
\end{equation}

By  \eqref{eq:1a}, \eqref{eq:72} and Lemma \ref{le:kiefer},
\begin{equation}
  \label{eq:74}
  \lim_{n\to\infty}P(\sup_{s\in[0,t]}
\abs{\frac{1}{n}\,Q_n^{(0)}(s)-(1-F_0(s))}>\epsilon)^{1/b_n^2}=0\,.
\end{equation}
Recalling  \eqref{eq:78} and \eqref{eq:5} 
implies that
\begin{equation}
  \label{eq:75}
  \lim_{n\to\infty}P(\sup_{s\in[0,t]}\abs{\frac{1}{n}\,Q_n^{(0)}(s)
+\frac{1}{n}\,A_n(s)-\frac{1}{n}\,
\int_{0}^sA_n(s-x)\,dF(x)-1}>\epsilon)^{1/b_n^2}=0\,.
\end{equation}
Besides, the LD convergence of  $X_n(0)$ to $X(0)$ implies that
\eqref{eq:72} can be strengthened as follows:
\begin{equation}
  \label{eq:72'}
  \limsup_{n\to\infty}
P\bl(\abs{\frac{Q_n(0)}{n}-1}>\epsilon\br)^{1/b_n^2}=0\,.
\end{equation}
Hence, by  \eqref{eq:P6} and \eqref{eq:13},
\begin{equation}
  \label{eq:76}
  \frac{1}{n}\,Q_n(t)-1=\,\int_0^t
(\frac{1}{n}\,Q_n(t-s)-1)^+ \,dF(s)+\theta_n(t)\,,
\end{equation}
where 
\[
\lim_{n\to\infty}  P(\sup_{s\in[0,t]}\abs{\theta_n(s)}>\epsilon)^{1/b_n^2}=0\,.
\]
Lemma B.1 in Puhalskii and Reed \cite{PuhRee10} implies that there
exists function $\rho$ which depends on the function $F$ only
such that
\[
  \sup_{s\in[0,t]}\abs{\frac{1}{n}\,Q_n(s)-1}\le \rho(t)
\sup_{s\in[0,t]}\abs{\theta_n(s)}\,.
\]
Therefore,
\begin{equation}
  \label{eq:77}
  \lim_{n\to\infty}  P(\sup_{s\in[0,t]}
\abs{\frac{1}{n}\,Q_n(s)-1}>\epsilon)^{1/b_n^2}=0\,.
\end{equation}
When combined with
 \eqref{eq:40} and \eqref{eq:4}, this yields the assertion of the lemma.
\end{proof}

As mentioned,   the LD convergence of  $U_n$ to $U$  has
been proved thereby.
Let $L_n=\bl((L_n(x,t)\,,x\in\R_+),t\in\R_+\br)$
 and
$L=\bl((L(x,t)\,,x\in\R_+),t\in\R_+\br)$\,, with
$L(x,t)=B(F(x),\mu t)$\,. 
\begin{lemma}
  \label{le:sovm}
As $n\to\infty$, the sequence $(X_n(0), X_n^{(0)},U_n,L_n)$
LD converges in distribution in
 $\R\times\D(\R_+,\R)\times\D(\R_+,\D(\R_+,\R))^2$
  to $(X(0),X^{(0)},U,L)$\,.
\end{lemma}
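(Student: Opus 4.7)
The plan is to represent each of the four components as a continuous functional of building blocks whose LD limits are already under control, and then to exploit the mutual independence of $Q_n(0)$, $(\eta_i^{(0)})$, $(\eta_i)$ and $A_n$ to assemble the joint limit through the product of deviabilities. Equations \eqref{eq:80} and \eqref{eq:7} identify $U_n$ and $L_n$ as random time changes of $K_n$ and $B_n$ from Lemma \ref{le:kiefer}, while the LD convergence $X_n(0)\to X(0)$ is part of the hypotheses. The genuinely new work therefore lies in establishing a Brownian bridge LD limit for $X_n^{(0)}$ and in verifying that the various random time changes do not entangle the four limits.

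For $X_n^{(0)}$, I would exploit that $F_0$ is continuous so that $\zeta_i^{(0)}:=F_0(\eta_i^{(0)})$ are i.i.d.\ uniform on $[0,1]$. Define
\[
V_n^{(0)}(t):=\frac{1}{b_n\sqrt n}\sum_{i=1}^n\bl(\mathbf 1_{\{\eta_i^{(0)}>t\}}-(1-F_0(t))\br).
\]
Separating the deterministic and random parts of the upper limit $Q_n(0)\wedge n$ yields the algebraic identity
\[
X_n^{(0)}(t)=V_n^{(0)}(t)-(1-F_0(t))\,X_n(0)^- - R_n(t),
\]
where $R_n(t)=(b_n\sqrt n)^{-1}\sum_{i=Q_n(0)\wedge n+1}^n\bl(\mathbf 1_{\{\eta_i^{(0)}>t\}}-(1-F_0(t))\br)$. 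Because the hypothesised LDP for $X_n(0)$ with deviation function concentrated at $q_0$ yields $|n-Q_n(0)|=O(b_n\sqrt n)$ in the LD sense, an exponential martingale estimate modelled on \eqref{eq:54}, applied on this random index range and uniformly in $t$ on compacts, makes $R_n\to 0$ super-exponentially at rate $b_n^2$. On the other hand, writing $\mathbf 1_{\{\eta_i^{(0)}>t\}}-(1-F_0(t))=-(\mathbf 1_{\{\zeta_i^{(0)}\le F_0(t)\}}-F_0(t))$ reduces $V_n^{(0)}$ to the uniform empirical process of Lemma \ref{le:kiefer} restricted to the time coordinate $s=1$; that lemma therefore delivers LD convergence of $V_n^{(0)}$ in $\D(\R_+,\R)$ to the Brownian bridge idempotent process $W^0(F_0(\cdot))$.

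For $(U_n,L_n)$, I would combine the joint LD convergence $(K_n,B_n)\to(K,B)$ of Lemma \ref{le:kiefer} with the super-exponential convergence $\hat A_n(\cdot)/n\to\mu\,\cdot$ of Lemma \ref{le:lln}, and invoke the large deviation Slutsky theorem (Theorem \ref{the:sl}) together with the continuity of the composition map at targets with continuous paths. The representations \eqref{eq:80} and \eqref{eq:7} then yield joint LD convergence of $(U_n,L_n)$ to $(U,L)=(K(F(\cdot),\mu\,\cdot),B(F(\cdot),\mu\,\cdot))$ in $\D(\R_+,\D(\R_+,\R))^2$.

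To assemble the full joint limit, I would appeal to independence: $X_n(0)$ is a function of $Q_n(0)$, $V_n^{(0)}$ of $(\eta_i^{(0)})$, and $(K_n,B_n)$ of $(\eta_i)$, these three families being mutually independent. Hence the joint LD limit of $(X_n(0),V_n^{(0)},K_n,B_n)$ is the product of the individual LD limits, which gives in particular the required mutual independence of $X(0)$, $W^0$ and $K$. The time change $\hat A_n/n$ is built from $Q_n(0)$ and $A_n$, each independent of the $\eta_i$, but its LD limit is deterministic and so does not entangle the limiting idempotent processes. A concluding continuous mapping step applied to $(X_n(0),V_n^{(0)})\mapsto V_n^{(0)}-(1-F_0(\cdot))X_n(0)^-$, combined with the super-exponential negligibility of $R_n$, yields $X_n^{(0)}\to X^{(0)}$ jointly with the other convergences. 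The main obstacle I anticipate is the uniform-in-$t$ super-exponential control of $R_n$ on compact intervals: because the index range $[Q_n(0)\wedge n+1,n]$ is itself random, one has to dovetail the exponential martingale inequality used in Lemma \ref{le:kiefer} with the super-exponential LD control of $Q_n(0)/n\to 1$ through a careful stopping argument.
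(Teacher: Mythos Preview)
Your proposal is correct in outline and would go through, but the paper handles the $X_n^{(0)}$ component by a cleaner device that sidesteps precisely the obstacle you flag. Instead of freezing the upper index at $n$ and controlling the remainder $R_n$ over the random range $[Q_n(0)\wedge n+1,n]$, the paper introduces the \emph{two--parameter} process
\[
X'^{(0)}_n(x,t)=\frac{1}{b_n\sqrt n}\sum_{i=1}^{\lfloor nt\rfloor}\bl(\mathbf 1_{\{\eta_i^{(0)}>x\}}-(1-F_0(x))\br),
\]
so that $X_n^{(0)}(t)=X'^{(0)}_n(t,\,Q_n(0)/n\wedge 1)-(1-F_0(t))X_n(0)^-$. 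Lemma~\ref{le:kiefer} applied to the $F_0(\eta_i^{(0)})$ yields LD convergence of $X'^{(0)}_n$ to a time--changed Kiefer idempotent process $\tilde K(F_0(\cdot),\cdot)$ independent of $(X(0),K,B)$; the random index range is absorbed into the second argument, and since $Q_n(0)/n\to1$ super--exponentially (from \eqref{eq:72'}), Slutsky plus continuous mapping deliver $X'^{(0)}_n(\cdot,\,Q_n(0)/n\wedge1)\to\tilde K(F_0(\cdot),1)=W^0(F_0(\cdot))$ directly. The separate uniform--in--$t$ exponential martingale estimate for $R_n$ that you anticipate is thus unnecessary: the work is already contained in Lemma~\ref{le:kiefer}.

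Two minor points. First, you invoke ``the hypothesised LDP for $X_n(0)$ with deviation function concentrated at $q_0$'', but the lemma is proved under the hypotheses of Theorem~\ref{the:heavy_traffic}, where $X(0)$ is a general idempotent variable; the super--exponential convergence $Q_n(0)/n\to1$ nonetheless follows from $b_n/\sqrt n\to0$ and the LD convergence of $X_n(0)$. Second, your statement that $\hat A_n/n$ is ``built from $Q_n(0)$ and $A_n$'' is inaccurate---$\hat A_n$ depends on the full dynamics, including the $\eta_i$ and $\eta_i^{(0)}$---but your conclusion stands because its super--exponential limit is deterministic, so Slutsky applies regardless of the dependence structure.
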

\begin{proof}
Let
\begin{equation}
  \label{eq:8}
    X'^{(0)}_n(x,t)=\frac{1}{b_n\sqrt{n}}\,\sum_{i=1}^{\lfloor
    nt\rfloor}\,
(\ind{\eta^{(0)}_i>
  x}-(1-F_0(x)))
\end{equation}
and $X'^{(0)}_n=(X'^{(0)}_n(t)\,,t\in\R_+)$\,.
By the hypotheses of Theorem \ref{the:heavy_traffic},
$X_n(0)$ LD converges to $X(0)$\,, by Lemma \ref{le:kiefer} and $F_0$
being strictly increasing,
$X'^{(0)}_n$ LD converges to
$((\tilde K(F_0(x),t)\,,x\in\R_+)\,, t\in\R_+)$\,, where $\tilde K$ represents a
Kiefer idempotent process that is independent of $(X(0),K,B)$\,.
 Also
$(K_n,B_n)$ LD converges to $(K,B)$\,.
By  independence assumptions, these convergences hold jointly,
cf. Section \ref{sec:large-devi-conv}.
Since $Q_n(0)/n\to 1$ 
and $\hat A_n(t)/n\to\mu t$ super--exponentially in probability
by \eqref{eq:72'} and
 \eqref{eq:4}, respectively,
''Slutsky's theorem'' (Lemma \ref{the:sl}) yields  joint LD convergence
of $(X_n(0),X'^{(0)}_n,K_n,B_n,Q_n(0)/n,(\hat A_n(t)/n\,,t\in \R_+))$ to
$(X(0),((\tilde K(F_0(x),t)\,,x\in\R_+)\,, t\in\R_+),K,B,1,(\mu
t\,,t\in\R_+))$\,. It is also noteworthy that, by \eqref{eq:1a},
 \eqref{eq:63} and \eqref{eq:8},  
\begin{equation}
  \label{eq:207}
  X^{(0)}_n(t)=X'^{(0)}_n(t,\frac{Q_n(0)}{n}\wedge 1)
-(1-F_0(t))X_n(0)^-\,.
\end{equation}
In
order to deduce the LD convergence of $(X_n(0), X_n^{(0)},U_n,L_n)$
to $(X(0), X^{(0)},U,L)$\,,
it is left to recall \eqref{eq:47a}, 
\eqref{eq:80}, \eqref{eq:7}, note that, by Lemma \ref{le:idemp_prop},
 $(\tilde K(t,1)\,,t\in[0,1])$ is a Brownian bridge
idempotent process, and apply the continuous mapping principle,
 the associated composition mappings being
continuous at continuous limits.
(See Whitt \cite{Whi80} for more background on continuous functions in
the Skorohod space context.)
\end{proof}

\begin{lemma}
\label{le:exp_tight}  The sequence $\{\Theta_n\,,n\in\N\}$ is $\bbC$--exponentially tight of order
  $b_n^2$ in $\D(\R_+,\R)$\,.
\end{lemma}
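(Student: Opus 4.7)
The plan is to decompose $\Theta_n=J_n-M_n$ via \eqref{eq:41} and establish $\bbC$-exponential tightness of order $b_n^2$ separately for each of $\{J_n\}$ and $\{M_n\}$; the claim then follows since this property is preserved under sums.

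For $\{J_n\}$ I appeal to the continuous mapping principle. Lemma \ref{le:sovm} yields LD convergence of $U_n$ to $U$\,, whose paths are continuous because $K$ is continuous and $F$ is continuous. The map $\Phi:u\mapsto\bl(\int_0^t u(t-x,x)(1-F(x))^{-1}\,dF(x)\br)_{t\in\R_+}$ sends $U_n$ to $J_n$ by \eqref{eq:17}. Since the measure $dF/(1-F)$ puts finite mass $-\ln(1-F(T))<\infty$ on each $[0,T]$ by \eqref{eq:45}, standard Skorohod-space arguments (Whitt \cite{Whi80}) make $\Phi$ continuous at continuous $u$\,; hence $J_n$ LD converges to $J$ of \eqref{eq:20}, which has continuous paths, and $\{J_n\}$ is $\bbC$-exponentially tight.

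For $\{M_n\}$ the pointwise bound \eqref{eq:23a} together with Theorem \ref{the:exp_tight} leaves only the verification of, for each $L,\eta>0$\,,
\begin{equation*}
\lim_{\delta\to0}\limsup_{n\to\infty}\sup_{s\in[0,L]}P\bl(\sup_{u\in[0,\delta]}|M_n(s+u)-M_n(s)|\ge\eta\br)^{1/b_n^2}=0.
\end{equation*}
I would rerun the exponential martingale argument of the proof of \eqref{eq:23a} on the shifted martingale $M_n(s+\cdot)-M_n(s)$\,: the local martingale property of $\exp(\alpha M_n(\cdot)-G_n(\alpha,\cdot))$ yields $E\exp\bl(\alpha[M_n(s+u)-M_n(s)]-[G_n(\alpha,s+u)-G_n(\alpha,s)]\br)\le1$ at the relevant stopping times, and Lemma 3.2.6 of Puhalskii \cite{Puh01} then reduces matters to super-exponential control of the cumulant increment $G_n(\alpha b_n^2,s+\delta)-G_n(\alpha b_n^2,s)$ uniformly in $s\in[0,L]$\,. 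Using $G_n\le\hat G_n$\,, the representation \eqref{eq:57}, the Taylor expansion $(n/b_n^2)(e^{\alpha b_n/\sqrt n}-1-\alpha b_n/\sqrt n)\to\alpha^2/2$\,, and the super-exponential LLN $\hat A_n(t)/n\to\mu t$ from Lemma \ref{le:lln}, this increment is, with super-exponential probability, at most $\alpha^2 b_n^2\,\omega_F(\delta)$ uniformly in $s\in[0,L]$\,, where $\omega_F(\delta)\to0$ is determined by the continuity of $x\mapsto-\ln(1-F(x))$\,. Choosing $\alpha$ large, then $\gamma$ of order $\eta/\alpha$\,, then $\delta$ small, gives the claim. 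The main obstacle lies precisely here: the contribution to the cumulant increment from customers already in service at time $s$ must be controlled via the uniform continuity of $-\ln(1-F)$ on $[0,L+1]$\,, while the contribution from arrivals in $(s,s+\delta]$ is handled by Lemma \ref{le:lln} together with continuity of $F$ at $0$\,.
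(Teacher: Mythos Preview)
Your proposal is correct and follows essentially the same route as the paper: decompose $\Theta_n=J_n-M_n$, dispose of $J_n$ by continuous mapping from the LD convergence of $U_n$, and handle $M_n$ via Theorem~\ref{the:exp_tight} together with the exponential-martingale bound already used for \eqref{eq:23a}. The only noteworthy difference is in how you close the oscillation estimate for $M_n$: you split the cumulant increment by customer type (in service at $s$ versus arrivals in $(s,s+\delta]$) and bound each piece separately, whereas the paper observes more directly that $b_n^{-2}\hat G_n(\alpha b_n^2,t)$ converges super-exponentially, locally uniformly in $t$, to the continuous function $\tfrac{\alpha^2}{2}\mu\int_0^t(t-u)\,dF(u)/(1-F(u))$, so increments over $\delta$-intervals are automatically uniformly small. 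Your ordering of parameters (``$\alpha$ large, then $\gamma\sim\eta/\alpha$, then $\delta$ small'') should be rephrased to match the required quantifier order $\lim_{\delta\to0}\limsup_n$: fix $\gamma<\eta$ and $\alpha>0$, show the cumulant probability term vanishes as $\delta\to0$, conclude the limit is at most $e^{\alpha(\gamma-\eta)}$, and only then send $\alpha\to\infty$.
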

\begin{proof}
      By Lemma \ref{le:sovm}, 
\eqref{eq:17}, \eqref{eq:20}  and the continuous mapping principle, 
 $J_n$ LD converges  to $J$\,, so, the sequence $J_n$ is
$\bbC$--exponentially tight. By \eqref{eq:41} it remains to check that
the sequence $M_n$ is $\bbC$--exponentially tight, which, according to
Theorem \ref{the:exp_tight}, is implied by the following convergences:
\begin{align}
  \lim_{r\to\infty}
\limsup_{n\to\infty}P(\sup_{s\in[0,t]}\abs{M_n(s)}>r)^{1/b_n^2}=0\notag
\intertext{ and }
\label{eq:28a}  \lim_{\delta\to0}
\limsup_{n\to\infty}\sup_{s\in[0,t]}P(\sup_{s'\in[0,\delta]}\abs{M_n(s'+s)-M_n(s)}>\epsilon)^{1/b_n^2}=0\,,
\end{align}
where $t>0$ and $\epsilon>0$\,.
The former convergence has already been proved, see \eqref{eq:23a}.
The proof of \eqref{eq:28a} proceeds along similar lines.
Since, with $\alpha\in\R$\,, the process $(e^{\alpha
  (M_n(s+s')-M_n(s))-(G_n(\alpha,s+s')-G_n(\alpha,s))}\,,s'\in\R_+)$ is a local martingale so that,
for arbitrary stopping time
$\tau$\,,
$E e^{\alpha
(  M_n(\tau+s)-M_n(s))-(G_n(\alpha,\tau+s)-G_n(\alpha,s))}\le1$\,,  by
Lemma 3.2.6 on p.282 in Puhalskii \cite{Puh01},
for arbitrary  $\gamma>0$\,, 
in analogy with \eqref{eq:56},
\[
    P(\sup_{s'\in[0,\delta]} (M_n(s')-M_n(s))\ge \epsilon)^{1/b_n^2}\le
e^{\alpha(\gamma- \epsilon)}
+P(\hat G_n(\alpha b_n^2,s+\delta)-\hat G_n(\alpha b_n^2,s)\ge\alpha
  b_n^2\gamma)^{1/b_n^2}\,.
\]
By \eqref{eq:57} and Lemma \ref{le:lln},
\begin{equation}
  \label{eq:58}
\frac{1}{b_n^2}\,  \hat G_n(\alpha b_n^2,t)\to\frac{\alpha^2}{2}\,\mu\int_0^t( t-u)
\frac{dF(u)}{1-F(u)}
\end{equation}
super--exponentially in probability at rate $b_n^2$\,. 
The latter  super--exponential convergence in probability 
being locally uniform in $t$\,, as the limit is a monotonic continuous
function starting at $0$\,, implies that, 
for  $\delta$ small enough, depending on $\alpha$\,,
\[
   \limsup_{n\to\infty}\sup_{s\in[0,t]} P(\sup_{s'\in[0,\delta]} 
(M_n(s'+s)-M_n(s))\ge \epsilon)^{1/b_n^2}\le
e^{\alpha(\gamma- \epsilon)}\,.
\]
Now, one chooses $\gamma<\epsilon$ and  sends $\alpha\to\infty$\,.
 \end{proof}

\begin{lemma}
    \label{le:joint_promezh}
As $n\to\infty$,
the sequence $(X_n(0), X_n^{(0)},
H_n,J_n,L_n,U_n,\Theta_n)$ LD converges in distribution in
$\R\times\D(\R_+,\R)^3\times\D(\R_+,\D(\R_+,\R))^2\times\D(\R_+,\R)$
to  $(X(0),X^{(0)},H,J,L,U,\Theta)$\,.
  \end{lemma}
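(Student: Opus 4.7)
The plan is to combine Lemma \ref{le:sovm}, the hypothesized LD convergence $Y_n\to Y$, the continuous mapping principle, the $\bbC$-exponential tightness supplied by Lemma \ref{le:exp_tight}, and an identification of the subsequential LD limit of $M_n=J_n-\Theta_n$ via its martingale structure. First, because $Y_n$ is a functional of $A_n$, which by the standing independence assumptions is independent of $Q_n(0)$, $\{\eta^{(0)}_i\}$, and $\{\eta_i\}$, the product rule for LD convergence of independent sequences upgrades Lemma \ref{le:sovm} to the joint LD convergence
\[
(X_n(0),\,X_n^{(0)},\,Y_n,\,U_n,\,L_n)\;\to\;(X(0),\,X^{(0)},\,Y,\,U,\,L),
\]
with $Y$ independent of the remaining idempotent variables. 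By \eqref{eq:16}, the map $Y_n\mapsto H_n$ is linear and continuous at continuous limits, and $Y$ has continuous paths; by \eqref{eq:17} and the continuity of $F$, the map $U_n\mapsto J_n$ is continuous at jointly continuous limits, and $U$ has continuous paths as a time-changed Kiefer idempotent process. The continuous mapping principle therefore yields the joint LD convergence
\[
(X_n(0),\,X_n^{(0)},\,H_n,\,J_n,\,L_n,\,U_n)\;\to\;(X(0),\,X^{(0)},\,H,\,J,\,L,\,U).
\]

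Next I would append $\Theta_n$. Lemma \ref{le:exp_tight} provides $\bbC$-exponential tightness of $\{\Theta_n\}$ of order $b_n^2$, so the enlarged sequence $(X_n(0),X_n^{(0)},H_n,J_n,L_n,U_n,\Theta_n)$ is exponentially tight in the product space. By Theorem~(P) of Puhalskii \cite{Puh91}, every subsequence admits a further subsequence along which the enlarged sequence LD converges; denoting by $\tilde\Theta$ the corresponding limit of $\Theta_n$, the convergence $J_n\to J$ and the decomposition \eqref{eq:41} force $M_n$ to LD converge along the same subsequence to $\tilde M:=J-\tilde\Theta$. The proof thus reduces to showing $\tilde\Theta=\Theta$ at every subsequential limit.

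This final identification is the main obstacle, since the map from $U$ (or from $L$) to $\Theta$ is a two-parameter Stieltjes integral which is not continuous in the Skorohod topology, so continuous mapping does not suffice. I would handle it through the finite-dimensional LD distributions of $M_n$, leveraging its $\mathbf{G}_n$-martingale structure and the exponential cumulant \eqref{eq:27}. Combining the computation \eqref{eq:57}--\eqref{eq:58} with the super-exponential law of large numbers of Lemma \ref{le:lln} produces an explicit deterministic super-exponential limit for $(1/b_n^2)\,G_n(b_n^2\alpha,t)$, jointly with all the preceding LD convergences. The LD version of the martingale central limit theorem (Theorem~5.4.4 in Puhalskii \cite{Puh01}), together with the exponential tightness of $M_n$ (inherited from those of $\Theta_n$ and $J_n$), then pins down the finite-dimensional LD limit of $M_n$ uniquely and jointly with $(X(0),X^{(0)},H,J,L,U)$. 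Matching the resulting covariance and cross-covariance structure against the explicit formulae \eqref{eq:12} and \eqref{eq:20} yields $\tilde M=J-\Theta$, hence $\tilde\Theta=\Theta$; since every subsequential LD limit point of the enlarged sequence is the same, the sequence itself LD converges to $(X(0),X^{(0)},H,J,L,U,\Theta)$.
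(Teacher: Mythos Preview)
Your first two steps---upgrading Lemma~\ref{le:sovm} to include $Y_n$, applying the continuous mapping principle to produce joint LD convergence of $(X_n(0),X_n^{(0)},H_n,J_n,L_n,U_n)$, and then invoking Lemma~\ref{le:exp_tight} to reduce the problem to identifying the subsequential limit $\tilde\Theta$---are sound and coincide with the paper's proof. The divergence, and the gap, is in the identification step.

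The paper does \emph{not} try to identify the limit of $M_n$ via a martingale LD--CLT. Instead it introduces explicit discrete approximations $\Theta_n^{(l)}(t)$ and $\Theta^{(l)}(t)$ (step--function Riemann sums in the $s$-variable, see \eqref{eq:22}--\eqref{eq:34}), each of which is a \emph{continuous} functional of $U_n$ and $U$ respectively; hence $(X_n(0),X_n^{(0)},H_n,J_n,L_n,U_n,\Theta_n^{(l)})$ LD converges to $(X(0),X^{(0)},H,J,L,U,\Theta^{(l)})$ for every fixed $l$. One then shows $\Theta^{(l)}\to\Theta$ locally uniformly, and that the remainder $\Theta_n(t)-\Theta_n^{(l)}(t)=\hat\Theta_n^{(l)}(t,t)$ vanishes super-exponentially as $n\to\infty$ and then $l\to\infty$, the latter via an exponential--martingale/stochastic--cumulant bound on $\hat\Theta_n^{(l)}(\cdot,t)$. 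This is a classical ``approximation by continuous functionals plus uniform control of the error'' argument, and it delivers joint convergence directly.

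Your proposed alternative is not complete as written. Theorem~5.4.4 in \cite{Puh01} identifies the \emph{marginal} LD limit of a martingale sequence (as a time-changed Wiener idempotent process) once the predictable quadratic variation converges; it does not, by itself, pin down the \emph{joint} LD limit of $M_n$ together with $U_n$ or $L_n$. The stochastic cumulant $G_n(b_n^2\alpha,t)$ in \eqref{eq:27}--\eqref{eq:58} depends only on $\hat A_n$ and has a deterministic super-exponential limit, so it carries no information about the coupling of $M_n$ with $U_n$. Yet $M=J-\Theta$ is a genuine functional of $U$ (equivalently of $L$), so any subsequential joint limit $(U,\tilde M)$ is \emph{not} a product of marginals, and identifying $\tilde M$ marginally does not force $\tilde M=J-\Theta$. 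Your appeal to ``matching covariance and cross-covariance structure'' is where the argument breaks: idempotent distributions are not Gaussian laws determined by second moments, and you have not exhibited any mechanism (a joint martingale structure in a common filtration, a characterization of the joint deviability, etc.) that would recover the cross-dependence from the cumulant calculation. To make a martingale-based identification work you would need something like joint LD convergence of a vector-valued martingale encoding both $M_n$ and the relevant slices of $L_n$, which is essentially as hard as the original problem; the paper's discretization route sidesteps this entirely.
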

  \begin{proof}
Let 
\begin{equation}
  \label{eq:22}
\Theta^{(l)}_n(t)=-\int_{\R_+^2}I_{t}^{(l)}(x,s)dU_n(x,s)\,,  
\end{equation}
where
\begin{equation}
  \label{eq:31}
  I_{t}^{(l)}(x,s)=\sum_{i=1}^\infty\ind{s\in(s^{(l)}_{i-1},s^{(l)}_i]}
\ind{x\in[0,t-s^{(l)}_{i-1}]}\,, 
\end{equation}
for some $0=s^{(l)}_0<s^{(l)}_1<\ldots$ such that
$s^{(l)}_i\to\infty$\,, as $i\to\infty$\,,
and $\sup_{i\ge1}(s^{(l)}_i-s^{(l)}_{i-1})\to0$\,, as $l\to\infty$\,, and, accordingly,
\[
\int_{\R_+^2}I_{t}^{(l)}(x,s)dU_n(x,s)
=\sum_{i=1}^\infty(U_n(t-s^{(l)}_{i-1},s^{(l)}_i)
-U_n(t-s^{(l)}_{i-1}) ,s^{(l)}_{i-1})\ind{s^{(l)}_{i-1}\le t}\,.
\]
Similarly, let
\begin{equation}
  \label{eq:34}
  \Theta^{(l)}(t)=-\int_{\R_+^2}I_{t}^{(l)}(x,s)\,dU(x,s)=
\sum_{i=1}^\infty(U(t-s^{(l)}_{i-1},s^{(l)}_i)
-U(t-s^{(l)}_{i-1},s^{(l)}_{i-1}))\ind{s^{(l)}_{i-1}\le t}\,.
\end{equation}
By the LD convergence of $Y_n$ to $Y$ in the hypotheses of Theorem
\ref{the:heavy_traffic},  
 Lemma \ref{le:sovm}, \eqref{eq:16}, \eqref{eq:47},
\eqref{eq:17}, \eqref{eq:20}, 
 and the continuous mapping principle, 
the sequence $(X_{n}(0), X_n^{(0)},
H_{n},J_{n},L_{n},U_{n})$ LD converges  to $(X(0), X^{(0)},
H,J,L,U)$\,.
Hence, the sequence $(X_{n}(0), X_n^{(0)},
H_{n},J_{n},L_{n},U_{n},\Theta_{n}^{(l)})$ LD converges to
$(X(0), X^{(0)},
H,J,L,U,\Theta^{(l)})$\,.
In addition, by Lemma \ref{le:exp_tight}, the sequence $\Theta_n$ is $\bbC$--exponentially
tight. Since the idempotent processes $X^{(0)},
H,J,L,U$  are seen to have continuous trajectories,
 the sequence $( X_n^{(0)},
H_n,J_n,L_n,U_n,\Theta_n)$ is $\bbC$--exponentially tight.
That a limit point of
$(X_n(0),X_n^{(0)},
H_n,J_n,L_n,U_n,\Theta_n)$ is distributed as
$(X(0), X^{(0)},
H,J,L,U,\Theta)$ would follow from the LD convergence of 
finite--dimensional distributions of
$(X_n(0), X_n^{(0)},
H_n,J_n,L_n,U_n,\Theta_{n})$ to  finite--dimensional distributions of
$(X(0), X^{(0)},
H,J,L,U,\Theta)$\,.
Owing to   \eqref{eq:26},
\begin{equation}
  \label{eq:213}
  \Theta^{(l)}(t)=-\int_{\R_+^2}I_{t}^{(l)}(x,s)\dot K(F(x),\mu s)\,dF(x)\,\mu ds\,.
\end{equation}
which implies, by \eqref{eq:12} and the
Cauchy--Schwarz inequality, that $\Theta^{(l)}\to \Theta$
  locally uniformly, as $l\to\infty$\,.
 Since  the sequence $(X_n(0), X_n^{(0)},
H_n,J_n,L_n,U_n)$ LD converges  to $(X(0), X^{(0)},
H,J,L,U)$\,, in order to prove the finite--dimensional LD convergence
 it suffices  to prove that
\begin{equation}
  \label{eq:35}
  \lim_{l\to\infty}\limsup_{n\to\infty}
P(\abs{\Theta_n^{(l)}(t)-\Theta_n(t)}>\epsilon)^{1/b_n^2}=0\,.
\end{equation}
 By \eqref{eq:19} and \eqref{eq:22},
\[
      \Theta_n(t)-\Theta_n^{(l)}(t)=\frac{1}{b_n\sqrt{n}}\sum_{i=1}^{\hat A_n(t)}
\sum_{j=1}^\infty
\ind{s^{(l)}_{j-1}\le t}\ind{\hat\tau_{n,i}\in(s^{(l)}_{j-1},s^{(l)}_j]}
\bl(\ind{\eta_i\in( t-\hat\tau_{n,i},t-s^{(l)}_{j-1}]}-
(F(t-s^{(l)}_{j-1})-F(t-\hat\tau_{n,i})\br)\,.
\]
Let 
\[
   \hat \Theta_n^{(l)}(s,t)=\frac{1}{b_n\sqrt{n}}\sum_{i=1}^{\hat A_n(s)}
\sum_{j=1}^\infty
\ind{s^{(l)}_{j-1}\le t}\ind{\hat\tau_{n,i}\in(s^{(l)}_{j-1},s^{(l)}_j]}
\bl(\ind{\eta_i\in( t-\hat\tau_{n,i},t-s^{(l)}_{j-1}]}-
(F(t-s^{(l)}_{j-1})-F(t-\hat\tau_{n,i}))\br)\,.
\]
Let $\mathcal{F}_{n}(s)$ represent the
 complete $\sigma$--algebra generated by the
random variables $\hat\tau_{n,j}\wedge \hat\tau_{n,\hat A_n(s)+1}$ and $\eta_{j\wedge
  \hat A_n(s)}$\,, where $j\in \N$\,. 
By part 4 of Lemma C.1 in Puhalskii and Reed  \cite{PuhRee10},
with $t$ held fixed, the 
process $(\hat\Theta_n^{(l)}(s,t)\,,s\in\R_+)$ is an
$\mathbb F_n$--locally square integrable martingale. Its measure of jumps is 
\begin{multline*}
  \mu_n^{(l)}([0,s],\Gamma)=
\sum_{i=1}^{\hat A_n(s)}
\sum_{j=1}^\infty
\ind{s^{(l)}_{j-1}\le
  t}\ind{\hat\tau_{n,i}\in(s^{(l)}_{j-1},s^{(l)}_j]}
\bl(\ind{(1-(F(t-s^{(l)}_{j-1})-F(t-\hat\tau_{n,i}))/(b_n\sqrt{n})\in\Gamma}
\ind{\eta_i\in( t-\hat\tau_{n,i},t-s^{(l)}_{j-1}]}\\
+\ind{(-(F(t-s^{(l)}_{j-1})-F(t-\hat\tau_{n,i}))/(b_n\sqrt{n})\in\Gamma}
\ind{\eta_i\not\in( t-\hat\tau_{n,i},t-s^{(l)}_{j-1}]}\br)\,,
\end{multline*}
where $\Gamma\subset \R\setminus\{0\}$\,.
Accordingly, the
 $\mathbb F_n$--predictable measure of jumps is
\begin{multline*}
  \nu_n^{(l)}([0,s],\Gamma)\\=
\sum_{i=1}^{\hat A_n(s)}
\sum_{j=1}^\infty
\ind{s^{(l)}_{j-1}\le
  t}\ind{\hat\tau_{n,i}\in(s^{(l)}_{j-1},s^{(l)}_j]}
\bl(\ind{(1-(F(t-s^{(l)}_{j-1})-F(t-\hat\tau_{n,i}))/(b_n\sqrt{n})\in\Gamma}
(F(t-s^{(l)}_{j-1})-F( t-\hat\tau_{n,i}))\\
+\ind{-(F(t-s^{(l)}_{j-1})-F(t-\hat\tau_{n,i}))/(b_n\sqrt{n})\in\Gamma}
(1-(F(t-s^{(l)}_{j-1})-F( t-\hat\tau_{n,i})))\br)\,.
\end{multline*}
For $\alpha\in\R$\,, as on p.214 in Puhalskii  \cite{Puh01},
 define the stochastic cumulant
\begin{multline}
      \label{eq:10}
      G^{(l)}_n(\alpha,s)=\int_0^s\int_\R(e^{\alpha x}-1-\alpha  x)\nu^{(l)}_n(ds',dx)\\=\sum_{i=1}^{\hat A_n(s)}
\sum_{j=1}^\infty\ind{s^{(l)}_{j-1}\le
  t}\ind{\hat\tau_{n,i}\in(s^{(l)}_{j-1},s^{(l)}_j]}\bl((e^{\alpha
  (1-(F(t-s^{(l)}_{j-1})-F(t-\hat\tau_{n,i})))/(b_n\sqrt{n})}\\
-1-
\frac{\alpha}{b_n\sqrt{n}}(1-(F(t-s^{(l)}_{j-1})-F(t-\hat\tau_{n,i}))))
(F(t-s^{(l)}_{j-1})-F(t-\hat\tau_{n,i}))\\+
(e^{-\alpha
  (F(t-s^{(l)}_{j-1})-F(t-\hat\tau_{n,i}))/( b_n\sqrt{n})}
-1+\frac{\alpha}
{b_n\sqrt{n}}(F(t-s^{(l)}_{j-1})-F(t-\hat\tau_{n,i})))
(1-(F(t-s^{(l)}_{j-1})-F(t-\hat\tau_{n,i})))\br)\,.
\end{multline}
The associated stochastic exponential is defined by 
\begin{equation}
  \label{eq:14}
  \mathcal{E}^{(l)}_n(\alpha,s)=e^{G^{(l)}_n(\alpha,s)}
\prod_{0<s'\le s}(1+\Delta G^{(l)}_n(\alpha,s'))e^{-\Delta G^{(l)}_n(\alpha,s')}\,,
\end{equation}
where $\Delta G^{(l)}_n(s')$ represents the jump of $
G^{(l)}_n(s'')$ with respect to $s''$ evaluated at $s''=s'$ and the
product is taken over the jumps.
By Lemma 4.1.1 on p.294 in
Puhalskii \cite{Puh01}, the process
$ \bl( \exp(\alpha b_n^2 \hat
  \Theta_n^{(l)}(s,t))\mathcal{E}^{(l)}_n(\alpha b_n^2,s)^{-1}\,,s\in\R_+\br)$
 is a well defined
 local martingale so that, for any stopping time $\tau$\,,
$  E\exp(\alpha b_n^2 \hat \Theta_n^{(l)}(\tau,t))
\mathcal{E}^{(l)}_n(\alpha b_n^2,\tau)^{-1}
\le1\,.
$ Lemma 3.2.6 on p.282 in Puhalskii \cite{Puh01} 
and \eqref{eq:14} imply that, for
$\alpha>0$ and $\gamma>0$\,,
\begin{equation}
  \label{eq:21}
    P(\sup_{s\in[0,t]} \hat \Theta_n^{(l)}(s,t)\ge \epsilon)
\le e^{\alpha b_n^2(\gamma-\epsilon)}+P   (
\mathcal{E}^{(l)}_n(\alpha b_n^2,t)\ge e^{\alpha
    b_n^2\gamma})\le e^{\alpha b_n^2(\gamma-\epsilon)}+P   (
G^{(l)}_n(\alpha b_n^2,t)\ge \alpha
    b_n^2\gamma)\,.
\end{equation}
By \eqref{eq:10},
\begin{multline*}
G^{(l)}_n(\alpha b_n^2,t)\le\hat A_n(t)\bl(\sup_{\abs{y}\le1}(e^{\alpha
  b_ny/\sqrt{n}}
-1-\alpha
\frac{b_ny}{\sqrt{n}})\sup_{j\in\N}
(F(t-s^{(l)}_{j-1})-F(t-s^{(l)}_j))\\+
\sup_{j\in\N}\sup_{y\in[F(t-s^{(l)}_{j}),F(t-s^{(l)}_{j-1})]}(e^{-\alpha
  b_n y/\sqrt{n}}
-1+\alpha
\frac{b_n}{\sqrt{n}}y)
\br)\,.
   \end{multline*}
As $n/b_n^2(e^{\alpha
  b_n y/\sqrt{n}}
-1-\alpha 
b_ny/\sqrt{n})\to\alpha^2y^2/2$ uniformly on bounded intervals, 
$\hat A_n(t)/n\to \mu t$
super--exponentially, as $n\to\infty$, and
$\sup_j(s^l_j-s^l_{j-1})\to0$\,, as $l\to\infty$\,, it follows that
\[
  \lim_{l\to\infty}\limsup_{n \to\infty}P   (G^{(l)}_n(\alpha b_n^2,t)\ge \alpha
    b_n^2\gamma)^{1/b_n^2}=0\,,
\]
which implies, thanks to \eqref{eq:21}, that
\[
\limsup_{l\to\infty}  \limsup_{n\to\infty}
  P(\sup_{s\in[0,t]}\hat \Theta_n^{(l)}(s,t)\ge \epsilon)^{1/b_n^2}
\le e^{\alpha (\gamma-\epsilon)}\,.
\]
Picking $\gamma<\epsilon$ and sending $\alpha$ to $\infty$ show
that the latter lefthand side equals zero.
A similar argument proves the convergence 
\[
\lim_{l\to\infty}    \limsup_{n\to\infty}
  P(\sup_{s\in[0,t]}(-\hat \Theta_n^{(l)}(s,t))\ge \epsilon)^{1/b_n^2}=0\,. 
\]
Noting that $\Theta_n(t)-\Theta_n^{(l)}(t)=\hat \Theta_n^{(l)}(t,t)$ yields 
the convergence \eqref{eq:35}.
  Lemma \ref{le:joint_promezh} has
been proved so that Theorem \ref{le:L} has been proved.

\end{proof}
Theorem \ref{le:L} has been proved. In order to obtain the assertion
of Theorem \ref{the:heavy_traffic},  note that $-K$ has
the same idempotent distribution as $K$
 and invoke the continuous mapping
principle in \eqref{eq:67}, which applies due to  Lemma B.2 in
Puhalskii and Reed \cite{PuhRee10}.
Part 2 of  Theorem \ref{the:heavy_traffic} holds because under its
hypotheses,
$Y(t)=\sigma W(t)-\beta\mu t$\,. The assertion of Theorem
\ref{the:ldp} follows on observing that $\exp(-I^Q(y))$, with
 $y\in\D(\R_+,\R)$\,, is the
deviability density of  the idempotent
  distribution of $X$ in the statement of Theorem 
\ref{the:heavy_traffic}. To see the latter, note that the mapping 
$(w^0,w,k)\to u$ specified by  \eqref{eq:1} is continuous when
restricted to the set $\{(w^0,w,k):\Pi^{W^0,W,K}(w^0,w,k)\ge a\}$\,,
where $\Pi^{W^0,W,K}(w^0,w,k)=\Pi^{W^0}(w^0)\Pi^{W}(w)\Pi^K(k)$ and $a\in(0,1]$\,, so that $X$ is  strictly Luzin 
on $\bl(\D([0,1],\R)\times\D(\R_+,\R)\times 
\D(\R_+,\D(\R_+,\R)),\Pi^{W^0,W,K}\br)$\,, see Appendix \ref{sec:large-devi-conv} for the
definition and properties of being strictly Luzin. Therefore,
\[
  \Pi^X(u)=\Pi^{W^0,W,K}(X=u)=
\sup_{(w^0,w,k):\,\eqref{eq:1}\text{ holds}}\Pi^{W^0}(w^0)\Pi^W(w)\Pi^K(k)\,.
\]

\section{Evaluating the deviation function}
\label{sec:eval-devi-funct}
This section concerns calculating the deviation function of Theorem
\ref{the:ldp}.
\begin{theorem}
  \label{the:var}Suppose that  $F$ is an absolutely continuous
  function and $I^Q(q)<\infty$\,. 
Then $q$ is absolutely continuous, 
$\dot q\in\mathbb L_2(\R_+)$\,, the infimum in \eqref{eq:2} is
attained uniquely and
\[
  I^Q(q)=\frac{1}{2}\,\int_0^\infty\overline p(t)\bl(\dot q(t)-\int_0^t\dot q(s)
\ind{q(s)>0}F'(t-s)\,ds
+(\beta-q_0^-) F_0'(t)\br)\,dt\,,
\]
where $\overline p(t)$ represents the  unique $ \mathbb L_2(\R_+)$--solution of the Fredholm equation of the
second kind
\begin{multline}
  \label{eq:114}
    (\mu+\sigma^2)p(t)=
  \dot q(t)-\int_0^t\dot q(s)
\ind{q(s)>0}F'(t-s)\,ds
+(\beta-x_0^-) F_0'(t)\\+F'_0(t)\int_0^\infty F_0'(s)p(s)\,ds
+\sigma^2\int_0^\infty F'(\abs{s-t})p(s)\,ds\\
+(\mu-\sigma^2)\int_0^\infty\int_0^{s\wedge t}F'(s-\tilde s)F'(t-\tilde
s)d\tilde s\,  p(s)ds\,,
\end{multline}
with $\dot q$\,, $F_0'$ and $F'$ representing  derivatives.
\end{theorem}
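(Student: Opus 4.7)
The plan is to recast the variational problem \eqref{eq:2}--\eqref{eq:1} as the minimization of a strictly convex quadratic over an affine subspace of a Hilbert space, and to derive \eqref{eq:114} from the Lagrange multiplier characterization of the minimizer. Under the added hypothesis that $F$ is absolutely continuous with density $f=F'$, Lemma \ref{le:ac} yields absolute continuity of any $q$ with $I^Q(q)<\infty$. Integrating by parts in $\int_0^t q(t-s)^+\,dF(s)$ and differentiating \eqref{eq:1} in $t$, using $(q^+)'(s)=\dot q(s)\mathbf{1}_{\{q(s)>0\}}$ a.e.\ and $F(0)=0$, I would obtain the identity
\begin{equation*}
h(t) := \dot q(t) - \int_0^t \dot q(s)\mathbf{1}_{\{q(s)>0\}}F'(t-s)\,ds + (\beta-q_0^-)F_0'(t) = (Ax)(t),
\end{equation*}
where $x=(\dot w^0,\dot w,\dot k)$ and $A$ is the bounded linear operator
\begin{equation*}
(Ax)(t) = F_0'(t)\dot w^0(F_0(t)) + \sigma\dot w(t) - \sigma\int_0^t f(t-s)\dot w(s)\,ds + \mu\int_0^t\dot k(F(t-s),\mu s)f(t-s)\,ds.
\end{equation*}

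Next I would view $x$ as ranging over the Hilbert space $\mathcal H$ of triples with $\dot w^0\in L^2([0,1])$, $\dot w\in L^2(\R_+)$, $\dot k\in L^2([0,1]\times\R_+)$, subject to the zero-mean conditions $\int_0^1\dot w^0=0$ and $\int_0^1\dot k(y,\cdot)\,dy=0$ a.e., which encode the bridge-type boundary conditions $w^0(1)=0$ and $k(1,t)=0$. Since $I^Q(q)<\infty$, the feasible set $\{x\in\mathcal H:\,Ax=h\}$ is a nonempty closed affine subspace, on which the strictly convex functional $\tfrac12\|x\|_{\mathcal H}^2$ attains its minimum at a unique $\bar x$. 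This gives the unique attainment of the infimum in \eqref{eq:2}, and the relation $h=A\bar x\in L^2(\R_+)$ establishes $\dot q\in L^2(\R_+)$.

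Lagrange duality for this quadratic programming problem produces a multiplier $\bar p\in L^2(\R_+)$ with $\bar x=A^*\bar p$, and substitution into $A\bar x=h$ gives the operator equation $AA^*\bar p=h$. Computing $A^*$ by Fubini, the leading contributions give $F_0'(t)\dot w^0(F_0(t))=F_0'(t)p(t)$ and $\mu\int_0^t\dot k(F(t-s),\mu s)f(t-s)\,ds=\mu p(t)F(t)$, which combine through $\mu(1-F(t))+\mu F(t)=\mu$ to deliver the coefficient $\mu+\sigma^2$ in front of $p(t)$, while the $\dot w$-contribution assembles (via the change of variables $s+u=\tau$) into $-\sigma^2\int p(s)F'(|s-t|)\,ds+\sigma^2\int p(s)\int_0^{s\wedge t}F'(s-r)F'(t-r)\,dr\,ds$. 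Reassembling yields
\begin{equation*}
(AA^*p)(t) = (\mu+\sigma^2)p(t) - \sigma^2\int_0^\infty p(s)F'(|s-t|)\,ds + \sigma^2\int_0^\infty p(s)\int_0^{s\wedge t}F'(s-r)F'(t-r)\,dr\,ds,
\end{equation*}
so $AA^*\bar p=h$ is precisely \eqref{eq:114}. Solvability and uniqueness of $\bar p$ in $L^2(\R_+)$ would follow from the Fredholm alternative applied to $AA^*=(\mu+\sigma^2)I+K$, with $K$ a self-adjoint Hilbert--Schmidt perturbation and $AA^*\succeq 0$. The value then reads
\begin{equation*}
I^Q(q) = \tfrac12\|\bar x\|_{\mathcal H}^2 = \tfrac12\langle A^*\bar p,\bar x\rangle_{\mathcal H} = \tfrac12\langle\bar p,A\bar x\rangle_{L^2} = \tfrac12\int_0^\infty\bar p(t)h(t)\,dt.
\end{equation*}

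The principal obstacle will be the careful bookkeeping of the adjoint computation inside the codimension-one subspaces carrying the bridge conditions on $\dot w^0$ and $\dot k$: one must verify that the boundary-projection correction terms either cancel against the explicit $F_0'(t)\dot w^0(F_0(t))$ and Kiefer contributions or are absorbed into an affine shift of the multiplier, so that the final equation cleanly produces the convolution kernels displayed in \eqref{eq:114}. A secondary technical point is the integration-by-parts step producing the $\dot q$ form of the constraint, which requires the absolute continuity of $q$ supplied by Lemma \ref{le:ac} and the identification $(q^+)'=\dot q\,\mathbf{1}_{\{q>0\}}$ a.e.
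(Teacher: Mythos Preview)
Your approach is essentially the paper's: recast \eqref{eq:2}--\eqref{eq:1} as minimizing $\tfrac12\|x\|^2$ subject to the linear constraint $Ax=h$ obtained by differentiating \eqref{eq:1}, apply Lagrange duality, and read off the Fredholm equation as the optimality condition $AA^*\bar p=h$. The paper carries this out via the saddle--point formulation \eqref{eq:15}, minimizes the inner problem to obtain \eqref{eq:84}--\eqref{eq:84b}, substitutes back to reach the dual \eqref{eq:109}, and varies $p$ to derive \eqref{eq:114}; your direct computation of $AA^*$ is the same calculation in different clothing, and your value formula $\tfrac12\langle\bar p,h\rangle$ is exactly what the paper obtains by evaluating \eqref{eq:109} at the maximizer.

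Two remarks on points where you and the paper diverge. First, the paper's \eqref{eq:15} takes the infimum over the full product $\mathbb L_2([0,1])\times\mathbb L_2(\R_+)\times\mathbb L_2([0,1]\times\R_+)$, silently dropping the bridge constraints $\int_0^1\dot w^0=0$ and $\int_0^1\dot k(\cdot,t)=0$ that you carry in $\mathcal H$; the unconstrained minimizers \eqref{eq:84} and \eqref{eq:84b} do not obviously satisfy them, so your flagged ``principal obstacle'' is real and is not explicitly resolved in the paper either. Second, your uniqueness argument via the Fredholm alternative is shakier than you suggest: convolution with $F'\in L^1$ is bounded on $L^2(\R_+)$ but not compact in general, so the perturbation $K$ need not be Hilbert--Schmidt. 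The paper's route is cleaner here: the dual in \eqref{eq:109} is strictly concave and coercive because $\langle p,AA^*p\rangle=\|A^*p\|^2\ge\mu\|p\|^2_{L^2}$ (from the identity immediately preceding \eqref{eq:109}), which yields existence and uniqueness of $\bar p$ without any compactness.
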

\begin{proof}
On writing
\begin{equation}
  \label{eq:52}
    \int_{\R_+^2} \ind{x+s\le t}\,\dot k(
F(x),\mu   s)\,dF(x)\,\mu\,ds=
  \int_0^{ t}\int_0^{F(t-s)}\dot k(x,\mu s)\,dx\,\mu\,ds\,,
\end{equation}
the equation  \eqref{eq:1} is seen to be of the form
\[
    q(t)=f(t)+\int_0^tq(t-s)^+\,dF(s)\,, t\in\R_+\,,
\]
with the functions $f(t)$ and $F(t)$ being absolutely continuous.
The function $q(t)$ is absolutely continuous by  Lemma \ref{le:ac}.
In addition, \eqref{eq:52} implies that, a.e.,
\begin{equation}
  \label{eq:119}
    \frac{d}{dt}\,\int_{\R_+^2} \ind{x+s\le t}\,{\dot k}(F(x),\mu s)\,dF(x)\,
\mu\,ds=\int_0^t{\dot k}(F(s),\mu (t-s))F'(s)\,\mu
ds\,.
\end{equation}

The infimum in \eqref{eq:2}
 is attained uniquely by coercitivity and strict convexity of the function
being minimised, cf., Proposition II.1.2 in  \cite{EkeTem76}.
Differentiation in \eqref{eq:1} with the account of \eqref{eq:119}
 obtains that, a.e.,
\begin{multline*}
\dot w^0(F_0(t))F_0'(t)
+\sigma\,\dot w(t)
-\int_0^tF'(t-s)\sigma\,\dot w(s)\,ds
+\int_0^t\dot k(F(s),\mu (t-s))F'(s)\,\mu
\,ds\\-\bl(  \dot q(t)-\int_0^t\dot q(s)\ind{q(s)>0}F'(t-s)\,ds+(\beta-x_0^-)
 F_0'(t)\br)=0\,.
\end{multline*}
In addition, the requirements that $w^0(0)=w^0(1)=0$ and 
$k(0,t)=k(1,t)=0$ give rise to the constraints
\begin{equation}
  \label{eq:38}
  \int_0^1\dot w^0(x)\,dx=0
\end{equation}
and
\begin{equation}
  \label{eq:39}
  \int_0^1\dot k(x,t)\,dx=0\,.
\end{equation}
Introduce
 the  map
\begin{multline*}
 \Phi:\, (\dot w^0,\dot w,\dot k)\to
\bl(\dot w^0(F_0(t))F_0'(t)
+\sigma\,\dot w(t)
-\int_0^tF'(t-s)\sigma\,\dot w(s)\,ds\\
+\int_0^t\dot k(F(s),\mu (t-s))F'(s)\,\mu\,
ds\,,t\in\R_+\br).
\end{multline*}
Since $F_0'(t)$ is bounded by \eqref{eq:73},
 $\Phi$ maps  
$V=\mathbb L_2([0,1])\times\mathbb L_2(\R_+)\times 
\mathbb L_2( [0,1]\times\R_+)$
to $\mathbb L_2(\R_+)$\,.
For instance, on using that $\int_0^\infty F'(s)\,ds=1$\,,
\[
  \int_0^\infty\bl(\int_0^tF'(t-s)\dot w(s)\,ds\br)^2\,dt
\le   \int_0^\infty\int_0^tF'(t-s)\dot w(s)^2\,ds\,dt=
\int_0^\infty\dot w(s)^2\,ds<\infty\]
and
\begin{multline*}
  \int_0^\infty\bl(\int_0^t\dot k(F(s),\mu (t-s))F'(s)\,\mu
ds\br)^2\,dt
\le \int_0^\infty\int_0^t\dot k(F(s),\mu (t-s))^2F'(s)\,\mu^2
ds\,dt\\=
\mu^2\int_0^\infty\int_0^1\dot k( x,t)^2dx\,
dt<\infty\,.
\end{multline*}
The method of Lagrange multipliers, more specifically, 
Proposition III.5.2 in 
\cite{EkeTem76} with $Y=\mathbb L_2(\R_+)^2\times \R$ and the set of
componentwise 
nonnegative functions as the cone $\mathcal{C}$\,, implies that
\begin{multline}
  \label{eq:15}
             I_{x_0}^Q(q)=\sup_{(p,\tilde p,r)\in \mathbb L_2(\R_+)^2\times\R}\;
\inf_{
\substack{(\dot w^0,\dot w,\dot k)\in
\mathbb L_2([0,1])\\\times\mathbb L_2( \R_+)\times
\mathbb L_2( [0,1]\times\R_+)}}\Bl(\frac{1}{2}\int_0^1\dot w^0(x)^2\,dx
+\frac{1}{2}\int_0^\infty \dot w(t)^2\,dt\\
+\frac{1}{2}\int_0^\infty\int_0^1 \dot k(x,t)^2\,dx\,dt
+\int_0^\infty p(t)\bl(\dot q(t)+F'(t)x_0^+
+(\beta-x_0^-) F_0'(t)\\-\int_0^t\dot q(s)
\ind{q(s)>0}F'(t-s)\,ds -\dot w^0(F_0(t))F_0'(t)
-\sigma\,\dot w(t)
+\int_0^tF'(t-s)\sigma\,\dot w(s)\,ds\\
-\int_0^t\dot k(F(s),\mu (t-s))F'(s)\,\mu
\,ds\br)\,dt
+r\int_0^1\dot w^0(x)\, dx
+\int_0^\infty\tilde p(t)\int_0^1\dot k(x,t)\,dx\,dt\Br)\,.
\end{multline}
Minimising in \eqref{eq:15} yields, with $(
\dot{\hat{ w}}^{0}(t)\,, 
\dot{\hat w}(t)\,, \dot{\hat{k}}(x,t))$ being optimal,
\begin{align*}
  \dot{\hat w}^0(x)-p(F_0^{-1}(x))
+r=0\,,\\
\dot{\hat w}(t)-\sigma
 p(t)+\sigma\int_0^\infty p(t+s)F'(s)\,ds=0\,,\\
\dot{\hat k}(x,t)-p(\frac{t}{\mu}+F^{-1}(x))
+\tilde p(t)=0\,.
\end{align*}
(For the latter, note that
\begin{multline*}
  \int_0^\infty p(t)\int_0^t\dot k(F(s),\mu (t-s))F'(s)\,\mu
ds\,dt=\int_0^\infty\int_s^\infty  p(t)\dot k(F(s),\mu (t-s))F'(s)\,\mu
\,dt\, ds\\=
\int_0^\infty\int_0^\infty  p(t+s)\dot k(F(s),\mu t)F'(s)\,\mu
\,dt\,ds
=
\int_0^\infty\int_0^1  p(\frac{t}{\mu}+F^{-1}(x))\dot k( x,t)\,dx\,dt\,.)
\end{multline*}
Hence,
\begin{multline*}
  I_{x_0}^Q(q)
=\sup_{(p,\tilde p,r)\in \mathbb L_2(\R_+)^2\times\R}
\bl(
\int_0^\infty p(t)\bl(\dot q(t)-\int_0^t\dot q(s)
\ind{q(s)>0}F'(t-s)\,ds
+(\beta-x_0^-) F_0'(t)\br)\,dt\\-\frac{1}{2}
\bl(\int_0^1(p(F_0^{-1}(x))
-r)^2\,dx
+\sigma^2\int_0^\infty( p(t)-\int_0^\infty p(t+s)F'(s)\,ds)^2\,dt
\\+\int_0^\infty\int_0^1 (p(\frac{t}{\mu}+F^{-1}(x))
-\tilde p(t))^2\,dx\,dt\br)\br)\,.
\end{multline*}
Given $p$\,,
the optimal $r$ is $\hat r=\int_0^1 p(F_0^{-1}(x))\,dx$ and the optimal
$\tilde p(t)$ is $\hat{\tilde p}(t)=\int_0^1p(t/\mu+F^{-1}(x))\,dx$\,.
(As a byproduct, $\hat w^0$ and $\hat k$ satisfy the constraints in
\eqref{eq:38} and \eqref{eq:39}.)
Therefore,
\begin{multline}
  \label{eq:42}
    I_{x_0}^Q(q)
=\sup_{p\in \mathbb L_2(\R_+)}
\bl(
\int_0^\infty p(t)\bl(\dot q(t)-\int_0^t\dot q(s)
\ind{q(s)>0}F'(t-s)\,ds
+(\beta-x_0^-) F_0'(t)\br)\,dt\\-\frac{1}{2}
\bl(\int_0^1(p(F_0^{-1}(x))
-\int_0^1 p(F_0^{-1}(\tilde x))\,d\tilde x)^2\,dx
+\sigma^2\int_0^\infty( p(t)-\int_0^\infty p(t+s)F'(s)\,ds)^2\,dt
\\+\int_0^\infty\int_0^1 (p(\frac{t}{\mu}+F^{-1}(x))
-\int_0^1p(\frac{t}{\mu}+F^{-1}(\tilde x))\,d\tilde x)^2\,dx\,dt\br)\br)\,.
\end{multline}
Note that
\begin{multline*} 
  \int_0^1(p(F_0^{-1}(x))
-\int_0^1 p(F_0^{-1}(\tilde x))\,d\tilde x)^2\,dx
=\int_0^1(p(F_0^{-1}(x))^2\,dx
-(\int_0^1 p(F_0^{-1}( x))\,d x)^2\\
=\int_0^\infty p(s)^2F_0'(s)\,ds
-(\int_0^\infty p(s)F'_0(s)\,d s)^2
\end{multline*}
and
\begin{multline*}
    \int_0^1 (p(\frac{t}{\mu}+F^{-1}(x))
-\int_0^1p(\frac{t}{\mu}+F^{-1}(\tilde x))\,d\tilde x)^2\,dx\\
=
\int_0^1p(\frac{t}{\mu}+F^{-1}( x))^2\,dx-
(\int_0^1p(\frac{t}{\mu}+F^{-1}( x))\,d x)^2\\
=
\int_0^\infty p(\frac{t}{\mu}+s)^2F'(s)\,ds-
(\int_0^\infty p(\frac{t}{\mu}+s)F'(s)\,d s)^2\,.
\end{multline*}
It follows that
\begin{multline*}
  I_{x_0}^Q(q)
=\sup_{p\in \mathbb L_2(\R_+)}
\bl(
\int_0^\infty p(t)\bl(\dot q(t)-\int_0^t\dot q(s)
\ind{q(s)>0}F'(t-s)\,ds
+(\beta-x_0^-) F_0'(t)\br)\,dt\\-\frac{1}{2}
\bl(\int_0^\infty p(s)^2F_0'(s)\,ds
-(\int_0^\infty p(s)F'_0(s)\,d s)^2
+\sigma^2\int_0^\infty( p(t)-\int_0^\infty p(t+s)F'(s)\,ds)^2\,dt
\\+\mu\int_0^\infty\int_0^\infty p(t+s)^2F'(s)\,ds\,dt-
\mu\int_0^\infty(\int_0^\infty p(t+s)F'(s)\,d s)^2\,dt\br)\br)\,.
\end{multline*}
 Observing that
\[
    \int_0^\infty p(s)^2F_0'(s)\,ds+
\mu\int_0^\infty\int_0^\infty
p(t+s)^2\,F'(s)ds\,dt
=\mu\int_0^\infty p(s)^2\,ds
\]
yields
\begin{multline}
  \label{eq:109}
  I_{x_0}^Q(q)=\sup_{p\in\mathbb L_2(\R_+)}\bl(
\int_0^\infty p(t)\bl(\dot q(t)-\int_0^t\dot q(s)
\ind{q(s)>0}F'(t-s)\,ds
+(\beta-x_0^-) F_0'(t)\br)\,dt\\-\frac{1}{2}
\bl(\mu\int_0^\infty p(s)^2\,ds-(\int_0^\infty p(s)F'_0(s)\,d s)^2
+\sigma^2\int_0^\infty(p(t)-\int_0^\infty p(t+s)F'(s)\,ds)^2\,dt\\
-
\mu\int_0^\infty(\int_0^\infty p(t+s)F'(s)\,d s)^2\,dt
\br)\br)\,.\end{multline}
By \eqref{eq:42}, the function in the $\sup$ is a strictly concave
function of $p$\,. 
Therefore,   a maximiser in \eqref{eq:109} is specified uniquely, see, e.g.,
Proposition II.1.2 in  \cite{EkeTem76}. The existence of the maximiser
follows from Proposition III.5.2 in 
\cite{EkeTem76}.

Varying $p$ in \eqref{eq:109} implies \eqref{eq:114}.
By the way, it is somewhat easier to carry out the calculations
if the righthand side of \eqref{eq:109} is rearranged as follows,
\begin{multline*}
I_{x_0}^Q(q)
  =\sup_{p\in\mathbb L_2(\R_+)}\bl(
\int_0^\infty p(t)\bl(\dot q(t)-\int_0^t\dot q(s)
\ind{q(s)>0}F'(t-s)\,ds
+(\beta-x_0^-) F_0'(t)\br)\,dt\\-\frac{1}{2}
\bl((\sigma^2+\mu)\int_0^\infty p(t)^2\,dt-(\int_0^\infty p(t)F'_0(t)\,d t)^2
-2\sigma^2\int_0^\infty p(t)\int_0^\infty p(t+s)F'(s)\,ds\,dt\\
+(\sigma^2-
\mu)\int_0^\infty(\int_0^\infty p(t+s)F'(s)\,d s)^2\,dt
\br)\br)\,.
\end{multline*}
 As the
maximiser in \eqref{eq:109} is unique, so is an $\mathbb L_2(\R_+)$--solution of the
Fredholm equation
\eqref{eq:114}. \end{proof}
\appendix

\section{Large deviation convergence and idempotent processes}
\label{sec:large-devi-conv}
This section reviews the basics of LD convergence and idempotent processes,
see, e.g., Puhalskii \cite{Puh01}.
Let $\mathbb E$ represent a metric space. 
 Let $\mathcal{P}(\mathbb{E})$ denote the
power set of $\mathbb{E}$. 
Set function $\Pi:\, \mathcal{P}(\mathbb{E})\to[0,1]$
is said to be a deviability if 
$\Pi(E)=\sup_{y\in E}\Pi(\{y\}),\,E\subset \mathbb{E},$
where the function $\Pi(y)=\Pi(\{y\})$ is such that 
$\sup_{y\in \mathbb{E}}\Pi(y)=1$ and the sets
$\{y\in \mathbb{E}:\,\Pi(y)\ge \gamma\}$ are compact for all $\gamma \in(0,1]$. 
(One can also refer to $\Pi$ as a maxi--measure or an idempotent probability.)
A deviability  is a tight set function in the sense that 
$\inf_{K\in\mathcal{K}(\mathbb E)}\Pi(\mathbb S\setminus K)=0$\,,
 where $\mathcal{K}(\mathbb E)$ stands for the collection of compact
subsets of $\mathbb E$\,.
If $\Xi$ is a directed set and $F_\xi\,, \xi\in \Xi\,,$ is a net of closed
subsets of $\mathbb E$ that is nonincreasing with respect to the
partial order on $\Xi$ by inclusion, then $\Pi(\cap_{\xi\in
  \Xi}F_\xi)=
\lim_{\xi\in \Xi}\Pi(F_\xi)$\,.
A property pertaining to elements of $\mathbb E$ is said to hold
$\Pi$--a.e. if the value of  $\Pi$ of the set of elements that do not
have this property equals 0\,.

Function $f$ from $\mathbb E$ to metric space 
$\mathbb E'$ is called an idempotent
variable.
 The idempotent distribution of the idempotent variable $f$ is defined
 as the set function $\Pi\circ
f^{-1}(\Gamma)=\Pi(f\in\Gamma),\,\Gamma\subset \mathbb E'$\,.
If $f$ is the canonical idempotent variable defined by
$f(y)=y$, then it has $\Pi$ 
as the idempotent distribution.  The continuous images of deviabilities are deviabilities, i.e., 
if $f:\,\mathbb E\to\mathbb E'$ is continuous, then $\Pi\circ f^{-1}$ defined by
$\Pi\circ f^{-1} (E')=\Pi( f^{-1}(E'))$ is a deviability on $\mathbb
E'$\,, where $E'\subset \mathbb E'$\,. Furthermore, this property
extends to the situation where $f:\,\mathbb E\to\mathbb E'$ is
strictly Luzin, i.e., continuous when restricted to the set
$\{y\in \mathbb E:\,\Pi(y)\ge a\}$\,, for arbitrary $a\in(0,1]$\,.
Thus, the idempotent distribution of a strictly Luzin idempotent
variables is a deviability. More generally, $f$ is said to be a Luzin
idempotent variable if the idempotent distribution of $f$ is a deviability. If
$f=(f_1,f_2)$\,, with $f_i$ assuming values in $\mathbb E'_i$\,,  
then the (marginal)
idempotent distribution of $f_1$ is defined by
$\Pi^{f_1}(y'_1)=\Pi(f_1=y'_1)=
\sup_{y:\,f_1(y)=y'_1}\Pi(y)$\,.
The  idempotent 
variables $f_1$ and $f_2$ are said to be independent if 
${\Pi}(f_1=y'_1,\,f_2=y'_2)=\Pi(f_1=y'_1)
\Pi(f_2=y'_2)$ for all $(y'_1,y'_2)\in
\mathbb E'_1\times\mathbb E'_2$\,,  
so, the joint distribution is the product of the
marginal ones.
Independence of finite collections of idempotent variables is defined
similarly.

A sequence $Q_n$
of probability measures on the Borel $\sigma$--algebra of $\mathbb
E$ is said to large
dviation (LD)  converge at rate $r_n$ to  deviability $\Pi$  if
for every bounded continuous non-negative function $f$ on $\mathbb{E}$
\[
\lim_{n\to\infty}\left(\int_{ \mathbb{E}}f(x)^{n}\,Q_n(dx)\right)^{1/r_n}=
\sup_{x\in \mathbb{E}}f(x)\Pi(x).
\]
Equivalently, one may require that 
$\lim_{n\to\infty}Q_n(\Gamma)^{1/r_n}=
\Pi( \Gamma)$  for every Borel set $\Gamma$ such that 
$\Pi$ of the interior of $\Gamma$ and $\Pi$ of the closure of $\Gamma$ agree.
If the $Q_n$ LD converge to $\Pi$\,, then
$\Pi(y)=\lim_{\delta\to0}\liminf_{n\to\infty}(Q_n(B_\delta(y))^{1/r_n}=
\lim_{\delta\to0}\limsup_{n\to\infty}(Q_n(B_\delta(y))^{1/r_n}$\,, for
all $y\in\mathbb E$\,, where $B_\delta(y)$ represents the open ball of
radius $\delta$ about $y$\,. (Closed balls may  be used as well.)
 The sequence $Q_n$ is said to be exponentially tight of order $r_n$ if 
$\inf_{K\in\mathcal{K}(\mathbb E)}\limsup_{n\to\infty}Q_n(\mathbb E\setminus
K)^{1/r_n}=0$\,. If the sequence $Q_n$ is exponentially tight of order
$r_n$\,, then there exists  subsequence $Q_{n'}$ that LD
converges at rate $r_{n'}$ to a deviability. Any such deviability will be
referred to as a Large Deviation (LD) limit point of the $Q_n$\,.
Given $\tilde E\subset \mathbb E$\,, the sequence $Q_n$ is said to be
$\tilde E$--exponentially tight if it is exponentially tight and
$\tilde\Pi(\mathbb E\setminus\tilde E)=0$\,, for any LD limit point of
$Q_n$\,. 

It is immediate that $\Pi$
is a deviability if and only if $I(x)=-\ln \Pi(x)$ is a tight  deviation
function, i.e., 
 the sets $\{x\in\mathbb E:\,I(x)\le \gamma\}$ are compact for all
$\gamma\ge0$ and $\inf_{x\in\mathbb E}I(x)=0$\,, and that the $Q_n$ LD
converge to $\Pi$ at rate $r_n$ if and only if they
 obey the LDP for rate $r_n$ with deviation function
$I$\,, i.e.,  $\liminf_{n\to\infty}(1/r_n)\,\ln Q_n(G)\ge -\inf_{x\in
  G}I(x)$\,, for all open sets $G$\,, and
$\limsup_{n\to\infty}(1/r_n)\,\ln Q_n(F)\le -\inf_{x\in
  F}I(x)$\,, for all closed sets $F$\,.

LD convergence of probability measures 
can be also expressed as LD convergence in distribution of the
associated random variables to idempotent variables. 
  Sequence
$\{X_n,\,n\in\N\}$  of random variables with  values in 
$\mathbb E' $ is said to 
LD converge in distribution at rate $r_n$ as $n\to\infty$
to   idempotent variable $X$ with values in $\mathbb E'$
if the sequence of the probability laws of the $X_n$ LD
converges to the idempotent distribution of $X$ at rate $r_n$. 
If random variables $X'_n$ and $X''_n$ are independent,
$X'_n$ LD converges to $X'$ and $X''_n$ LD converges to $X''$\,, then
the sequence $(X'_n,X''_n)$ LD converges to $(X',X'')$\,. If  sequence $\{{P}_n,\,n\in\N\}$
of probability measures  LD converges to  deviability
${\Pi}$\,, then one has LD convergence in
distribution of the canonical idempotent variables. A continuous
mapping principle holds: if random variables $X_n$ LD converge at rate
$r_n$ to idempotent variable $X$ and $f:\,\mathbb E'\to\mathbb E''$ is a continuous
function, then the random variables $f(X_n)$ LD converge at rate $r_n$
to $f(X)$\,, where $\mathbb E''$ is a metric space. 
The following version of Slutsky's theorem holds.
\begin{theorem}
  \label{the:sl}
Let $Y_n$ be random variables with values in $\mathbb
E''$
and let $a\in\mathbb E''$\,; if the sequence 
$X_n$ LD converges to $X$ 
and $Y_n\to a$ super--exponentially  in probability for rate 
 $r_n$\,, i.e., $P(d(Y_n,a)>\epsilon)^{1/r_n}\to0$\,, for
arbitrary $\epsilon>0$\,, then the sequence $(X_n,Y_n)$ LD converges
 at rate $r_n$
to $(X,a)$ in $\mathbb E'\times\mathbb E''$\,, where $d$ denotes the
metric on $\mathbb E''$\,.
\end{theorem}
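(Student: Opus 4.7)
The plan is to reduce $(X_n,Y_n)$ to the surrogate pair $(X_n,a)$, relying on the super--exponential concentration of $Y_n$ at $a$. First I would invoke continuity of the map $\phi:x\mapsto(x,a)$ from $\mathbb{E}'$ to $\mathbb{E}'\times\mathbb{E}''$ and the continuous mapping principle recorded earlier in this appendix to conclude that $(X_n,a)=\phi(X_n)$ LD converges at rate $r_n$ to $(X,a)$. The associated idempotent distribution is then $\Pi^{(X,a)}(E)=\Pi^X(\{x:(x,a)\in E\})$ for every Borel $E\subset\mathbb{E}'\times\mathbb{E}''$, so in particular its mass is supported on the slice $\mathbb{E}'\times\{a\}$. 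With this surrogate in hand it would suffice to compare $(X_n,Y_n)$ with $(X_n,a)$ at the large deviation scale.

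For the upper bound on a closed $F\subset\mathbb{E}'\times\mathbb{E}''$, let $F^\epsilon$ denote the closed $\epsilon$--neighbourhood of $F$ in the product metric. The containment
\[
\{(X_n,Y_n)\in F\}\subset\{(X_n,a)\in F^\epsilon\}\cup\{d(Y_n,a)>\epsilon\}
\]
together with the elementary bound $(p+q)^{1/r_n}\le 2^{1/r_n}\max(p,q)^{1/r_n}$ would give $\limsup_{n\to\infty}P\bl((X_n,Y_n)\in F\br)^{1/r_n}\le\Pi^{(X,a)}(F^\epsilon)$, because the LD convergence of $(X_n,a)$ controls the first term and the hypothesis on $Y_n$ annihilates the second. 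Sending $\epsilon\downarrow0$ and using that $\bigcap_{\epsilon>0}F^\epsilon=F$, together with continuity from above of the deviability $\Pi^{(X,a)}$ along the nested family $\{F^\epsilon\}_{\epsilon>0}$ (noted in this appendix), would deliver the closed--set bound.

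For the lower bound on an open $G$, the case $\Pi^{(X,a)}(G)=0$ is trivial. Otherwise, since $\Pi^{(X,a)}$ is supported on the slice at $a$, given $\delta>0$ I would select $x_0$ with $(x_0,a)\in G$ and $\Pi^X(x_0)\ge\Pi^{(X,a)}(G)-\delta$; openness of $G$ would then furnish an open $U\ni x_0$ and $\epsilon>0$ with $U\times B_\epsilon(a)\subset G$. The inclusion $\{X_n\in U\}\cap\{d(Y_n,a)\le\epsilon\}\subset\{(X_n,Y_n)\in G\}$ yields $P((X_n,Y_n)\in G)\ge P(X_n\in U)-P(d(Y_n,a)>\epsilon)$, and since $P(X_n\in U)^{1/r_n}\to\Pi^X(U)\ge\Pi^X(x_0)>0$ dominates the super--exponentially small subtracted tail, one would obtain $\liminf_n P((X_n,Y_n)\in G)^{1/r_n}\ge\Pi^X(x_0)\ge\Pi^{(X,a)}(G)-\delta$; sending $\delta\to0$ completes the lower bound.

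No separate exponential tightness of the joint sequence is required, as the upper and lower bounds characterize LD convergence directly. The arguments are largely routine once the reduction to $(X_n,a)$ is in place; the only non--routine ingredient, and hence the main potential obstacle, is the passage $\Pi^{(X,a)}(F^\epsilon)\downarrow\Pi^{(X,a)}(F)$ as $\epsilon\downarrow0$, which is precisely the nested closed sets property of deviabilities recalled in this appendix.
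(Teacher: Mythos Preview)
The paper does not actually prove this theorem: it is stated in the appendix as a known analogue of Slutsky's theorem, with the implicit reference being Puhalskii's monograph on idempotent probability. There is thus no ``paper's own proof'' to compare against.

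Your argument is essentially correct and is the standard one. Two small points of imprecision are worth flagging. First, in the lower bound you write $P(X_n\in U)^{1/r_n}\to\Pi^X(U)$; LD convergence only guarantees the one--sided bound $\liminf_n P(X_n\in U)^{1/r_n}\ge\Pi^X(U)$ for open $U$, but that is all you need. Second, the passage from $P((X_n,Y_n)\in G)\ge P(X_n\in U)-P(d(Y_n,a)>\epsilon)$ to the liminf bound deserves one more line: for any $\eta<\Pi^X(x_0)$ one has $P(X_n\in U)\ge\eta^{r_n}$ eventually, while $P(d(Y_n,a)>\epsilon)\le(\eta/2)^{r_n}$ eventually, so the difference is at least $\eta^{r_n}(1-2^{-r_n})$, whose $r_n$--th root tends to $\eta$. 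With these clarifications the proof is complete; the nested--closed--sets continuity property you invoke for the upper bound is indeed exactly the one recorded earlier in the appendix.
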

Collection $(X_t,\,t\in\R_+)$ 
of idempotent variables on $\mathbb E$
is called an idempotent process.
The functions $(X_t(y),\,t\in\R_+)$ 
for various $y\in\mathbb E$ are
called trajectories (or paths) of $X$.
Idempotent processes are said to be independent if they are
independent as idempotent variables with values in the associated
function spaces. The concepts of idempotent processes with independent
and (or) stationary increments mimic those for stochastic processes.

Since this paper deals with stochastic processes having
rightcontinuous trajectories with lefthand limits, the underlying
 space $\mathbb E$ may be assumed to be a
Skorohod space $\D(\R_+,\R^m)$\,, for suitable $m$\,.

Suppose, $X_n=(X_n(t)\,,t\in\R_+)$ is a sequence of stochastic processes which
assumes values in a metric space $E'$ with metric $d'$
 and has rightcontinuous trajectories
with lefthand limits. The sequence $X_n$ is said to be exponentially
tight of order $r_n$ if the sequence of the distributions of $X_n$ as
measures on the Skorohod space $\D(\R_+,\mathbb E')$ is exponentially tight of
order $r_n$\,. It is said to be $\bbC$--exponentially tight if any LD
limit point is the law of an idempotent process with continuous
trajectories, i.e.,  $\Pi(\D(\R_+,\mathbb E')
\setminus\bbC(\R_+,\mathbb E'))=0$\,, whenever 
 $\Pi$ is an LD limit point of the laws of
$X_n$\,.

The method of finite--dimensional distributions for LD convergence of
stochastic processes is summarised in the next theorem,
Puhalskii \cite{Puh91}. The proof
mimics that in weak convergence theory.
\begin{theorem}
  \label{the:fd} If, for all tuples 
$t_1< t_2<\ldots < t_l$ with the $t_i$ coming from a dense subset of
$\R_+$\,,
the sequence of
 vectors $(X_n(t_1),\ldots,X_n(t_l))$ LD converges  in $\R^l$ at rate $r_n$
to $(X(t_1),\ldots,X(t_l))$ and the sequence $X_n$ is
$\bbC$--exponentially tight of order $r_n$\,, then
$X$ is a continuous path idempotent process and the sequence
$X_n$ LD converges in $\D(\R_+,\mathbb E')$ at rate $r_n$ to $X$\,.
\end{theorem}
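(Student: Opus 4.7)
The plan is to imitate the classical weak-convergence argument, replacing Prohorov's theorem by its LD analogue (Theorem (P) in Puhalskii \cite{Puh91}) and identifying subsequential LD limits through their finite-dimensional distributions.

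First, since $\bbC$-exponential tightness implies ordinary exponential tightness, every subsequence of $\{X_n\}$ admits a further sub-subsequence $\{X_{n_k}\}$ whose laws LD converge in $\D(\R_+,\mathbb{E}')$ to some deviability $\Pi$, and by the $\bbC$--exponential tightness hypothesis $\Pi\bl(\D(\R_+,\mathbb{E}')\setminus\bbC(\R_+,\mathbb{E}')\br)=0$.

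Second, for any tuple $t_1<\cdots<t_l$ drawn from the dense subset in the hypothesis, the evaluation map $\pi_{t_1,\ldots,t_l}:y\mapsto(y(t_1),\ldots,y(t_l))$ from $\D(\R_+,\mathbb{E}')$ to $(\mathbb{E}')^l$ is not globally continuous in the Skorohod topology, but it is continuous when restricted to $\bbC(\R_+,\mathbb{E}')$ since Skorohod convergence to a continuous limit is uniform on compacts. Because $\Pi$ is concentrated on $\bbC(\R_+,\mathbb{E}')$, the map $\pi_{t_1,\ldots,t_l}$ is strictly Luzin with respect to $\Pi$, so the continuous mapping principle for strictly Luzin maps gives that the LD limit of the laws of $(X_{n_k}(t_1),\ldots,X_{n_k}(t_l))$ equals $\Pi\circ\pi_{t_1,\ldots,t_l}^{-1}$. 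By hypothesis this LD limit is also the idempotent distribution of $(X(t_1),\ldots,X(t_l))$, which pins down the finite-dimensional marginals of $\Pi$ along dense-set tuples.

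Third, I would argue that any two deviabilities on $\D(\R_+,\mathbb{E}')$ that are concentrated on $\bbC(\R_+,\mathbb{E}')$ and agree on finite-dimensional cylinders based at times of a dense subset of $\R_+$ must coincide. Using the characterization $\Pi(y)=\lim_{\delta\to0}\liminf_{k\to\infty}Q_{n_k}(B_\delta(y))^{1/r_{n_k}}$ together with the fact that, for a continuous $y$, closeness in the Skorohod topology on each bounded interval is equivalent to closeness on a finite dense grid, one expresses each Skorohod ball about a continuous trajectory as a nested intersection of finite-dimensional cylinders over denser and denser grids; the value of $\Pi$ on such a ball, and hence on $\{y\}$, is therefore determined by the dense-time finite-dimensional marginals. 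Equivalently, an arbitrary tuple $(s_1,\ldots,s_m)\in\R_+^m$ can be approximated by tuples from the dense set, and one passes to the limit using continuity of the paths in the support of $\Pi$ together with compactness of the level sets $\{y:\Pi(y)\ge\gamma\}$.

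Hence every subsequential LD limit coincides with the same deviability $\Pi^X$, which has the finite-dimensional distributions of $X$ and is concentrated on $\bbC(\R_+,\mathbb{E}')$; in particular $X$ admits a continuous-path version, and by the standard Urysohn-type argument (every subsequence has a sub-subsequence LD converging to the same limit) the full sequence $X_n$ LD converges in $\D(\R_+,\mathbb{E}')$ at rate $r_n$ to $X$. The main obstacle is the uniqueness step just outlined: in the idempotent framework one must carefully verify that a deviability supported on continuous paths is determined by its finite-dimensional marginals along a dense time set, which is where the reduction of Skorohod balls around continuous functions to monotone limits of finite-dimensional cylinders is essential.
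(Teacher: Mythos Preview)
Your proposal is correct and is precisely the ``mimic weak convergence theory'' argument the paper points to: the paper does not spell out a proof of this theorem but merely cites Puhalskii \cite{Puh91} and remarks that the proof follows the weak-convergence template, which is exactly what you have reconstructed (subsequential LD relative compactness from $\bbC$-exponential tightness, identification of finite-dimensional marginals via the continuous mapping principle at continuous limits, and the uniqueness/Urysohn step). Your flagged ``obstacle'' --- that a deviability supported on $\bbC(\R_+,\mathbb{E}')$ is determined by its dense-time finite-dimensional marginals --- is indeed the only nonroutine point, and your reduction via nested cylinders/compact level sets is the standard way to handle it.
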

\begin{theorem}
\label{the:exp_tight}
Suppose $\mathbb E'$ is, in addition, complete and separable.
The sequence $X_n$ is $\bbC$--exponentially tight of order $r_n$
if and only if
  \begin{enumerate}
  \item the sequence $X_n(t)$ is exponentially tight of order $r_n$\,,
    for all $t$ from a dense subset of $\R_+$\,, and,
\item 
for all $\epsilon>0$ and $L>0$\,,
  \begin{equation}
    \label{eq:83}
    \lim_{\delta\to0}\limsup_{n\to\infty}\sup_{t\in[0,L]}
      P(\sup_{s\in[0,\delta]}d'(X_n(t+s),X_n(t))>\epsilon)^{1/r_n}=0\,.
  \end{equation}
  \end{enumerate}
\end{theorem}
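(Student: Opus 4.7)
The statement is the large deviation analogue of the Aldous tightness criterion for the Skorohod topology; the plan is to exploit the Arzel\`a--Ascoli characterization of relative compactness in $\bbC(\R_+,\mathbb E')$ for complete separable $\mathbb E'$: a set $A$ is relatively compact iff the projections $\{x(t):x\in A\}$ are relatively compact in $\mathbb E'$ for every $t$ in a dense subset of $\R_+$ and, for every $L>0$, the uniform modulus $w_L(A,\delta):=\sup_{x\in A}\sup_{t\in[0,L]}\sup_{s\in[0,\delta]}d'(x(t+s),x(t))$ tends to $0$ as $\delta\downarrow 0$. Since compactness in $\bbC(\R_+,\mathbb E')$ (uniform-on-compacts topology) implies compactness in $\D(\R_+,\mathbb E')$, the $\bbC$--exponential tightness of $\{X_n\}$ is equivalent to the existence, for each $\gamma>0$, of a compact $K\subset\bbC(\R_+,\mathbb E')$ with $\limsup_n P(X_n\notin K)^{1/r_n}\le\gamma$.

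Necessity follows directly. Fix such a $K$. Continuity of the evaluation map $x\mapsto x(t)$ on $\bbC(\R_+,\mathbb E')$ makes $\{x(t):x\in K\}$ compact, giving (1). Equicontinuity of $K$ provides, for any $\epsilon,L>0$, some $\delta_0>0$ with $w_L(x,\delta_0)\le\epsilon$ for all $x\in K$; hence for $\delta\le\delta_0$,
\begin{equation*}
    \sup_{t\in[0,L]}P\bl(\sup_{s\in[0,\delta]}d'(X_n(t+s),X_n(t))>\epsilon\br)\le P(w_L(X_n,\delta)>\epsilon)\le P(X_n\notin K)\,,
\end{equation*}
which on raising to the $1/r_n$ power yields (2).

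Sufficiency is the substantive direction, where the compact $K$ must be constructed. Given $\gamma>0$, fix a countable dense set $\{t_k\}\subset\R_+$ on which (1) holds; by (1), choose compacts $A_k\subset\mathbb E'$ with $\limsup_n P(X_n(t_k)\notin A_k)^{1/r_n}\le\gamma_k$. By (2), for each $L,m\in\N$ choose $\delta_{L,m}>0$ with
\begin{equation*}
    \limsup_n\sup_{t\in[0,L]}P\bl(\sup_{s\in[0,2\delta_{L,m}]}d'(X_n(t+s),X_n(t))>1/(2m)\br)^{1/r_n}\le\gamma_{L,m}\,.
\end{equation*}
The key localization step: partitioning $[0,L]$ into $N=\lceil L/\delta_{L,m}\rceil+1$ subintervals of length $\delta_{L,m}$, the triangle inequality gives
\begin{equation*}
    w_L(X_n,\delta_{L,m})\le 2\max_{i\le N}\sup_{s\in[0,2\delta_{L,m}]}d'(X_n(i\delta_{L,m}+s),X_n(i\delta_{L,m}))\,,
\end{equation*}
so a union bound together with $N^{1/r_n}\to 1$ (since $r_n\to\infty$) yields $\limsup_n P(w_L(X_n,\delta_{L,m})>1/m)^{1/r_n}\le\gamma_{L,m}$. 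Setting
\begin{equation*}
    K=\{x\in\bbC(\R_+,\mathbb E'):x(t_k)\in A_k\text{ for all }k,\ w_L(x,\delta_{L,m})\le 1/m\text{ for all }L,m\in\N\}\,,
\end{equation*}
Arzel\`a--Ascoli renders $K$ compact, and $\limsup_n P(X_n\notin K)^{1/r_n}$ is dominated on the LD scale by the supremum of the collection $\{\gamma_k,\gamma_{L,m}\}$.

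The main obstacle is the countable union implicit in the definition of $K$: since on the LD scale bounds combine as maxima rather than sums, the countably many exceptional events must be handled by arranging that only finitely many tolerances exceed $\gamma$, with the tails controlled by shrinking the corresponding $A_k$ and $\delta_{L,m}$ at a rate compatible with $r_n\to\infty$. A standard diagonal argument choosing $\gamma_k,\gamma_{L,m}\to 0$ fast enough delivers the overall estimate $\limsup_n P(X_n\notin K)^{1/r_n}\le\gamma$, completing the proof.
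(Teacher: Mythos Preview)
There is a genuine gap in the sufficiency direction. Your plan is to build a compact $K\subset\bbC(\R_+,\mathbb E')$ directly via countably many constraints on the uniform modulus $w_L$, but the bounds you obtain,
\[
\limsup_{n\to\infty}P\bigl(w_L(X_n,\delta_{L,m})>1/m\bigr)^{1/r_n}\le\gamma_{L,m}\,,
\]
are only \emph{asymptotic} in $n$, not uniform. Because the $X_n$ live in $\D(\R_+,\mathbb E')$ and may well have jumps, $w_L(X_n,\delta)$ need not tend to $0$ as $\delta\to0$ for a fixed $n$; indeed $P(w_L(X_n,\delta)>1/m)$ can equal $1$ for every $\delta>0$ whenever $X_n$ has a jump of size exceeding $1/m$ on $[0,L]$. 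Hence the terms in the countable union defining $K^c$ cannot be made small for all $n$ simultaneously, and the ``standard diagonal argument'' you invoke cannot cure this: for each fixed $n$, infinitely many of the terms $P(w_L(X_n,\delta_{L,m})>1/m)$ may fail to be small, regardless of how fast $\gamma_{L,m}\to0$. Your preliminary claim that $\bbC$--exponential tightness is \emph{equivalent} to the existence of exhausting compacts inside $\bbC$ is likewise not established; the paper's definition is exponential tightness in $\D$ together with every LD limit point being supported on $\bbC$, and the equivalence you assert is essentially the content of the theorem.

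The paper's proof circumvents this by decoupling the two pieces. First, it replaces $w_L$ by the Skorohod modulus $w'_L$, noting that $w'_L\le w_L(\cdot,2\delta)$ and, crucially, that $w'_L(X_n,\delta)\to0$ as $\delta\to0$ for every fixed $n$ because $X_n\in\D$. This upgrades the limsup bound to $\lim_{\delta\to0}\sup_n P(w'_L(X_n,\delta)>\epsilon)^{1/r_n}=0$, so that one can choose $\delta_i$ with $P(w'_i(X_n,\delta_i)>2^{-i})^{1/r_n}<\epsilon\,2^{-i}$ for \emph{all} $n$; the resulting summable bound controls the countable intersection and yields a compact in $\D$. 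Second, the paper shows separately, by a direct calculation in the Skorohod metric, that any LD limit point $\Pi$ satisfies $\Pi(X)=0$ at every discontinuous $X$. You should either adopt this two--step scheme, or explain precisely how the countable union over the $w_L$--constraints can be controlled given only limsup information --- the latter does not appear to be possible without passing through $w'_L$.
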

\begin{proof}
  The necessity of the condition follows from the continuity of the
  projection mapping and the continuous mapping principle. 
 The proof of sufficiency is standard and is included for
  completeness.
Let, for $L>0$\,, $\delta>0$ and $X$ from the Skorohod space
$\D(\R_+,\mathbb E')$\,,
\begin{equation}
  \label{eq:206}
  w_L(X,\delta)=\sup_{t,s\in[0,L]:\,\abs{t-s}\le\delta} d'(X(t),X(s))\,.
\end{equation}
Since
\begin{multline*}
  P(w_L(X_n,\delta)>\epsilon)\le P(\bigcup_{i=1}^{\lfloor
    L/\delta\rfloor
    +1}\{3\sup_{t\in[i\delta,(i+1)\delta]}d'(X_n(t),X_n(i\delta))>\epsilon\}\\
\le \sum_{i=1}^{\lfloor
    L/\delta\rfloor
    +1}P(3\sup_{t\in[i\delta,(i+1)\delta]}d'(X_n(t),X_n(i\delta))>\epsilon)
\le(\lfloor\frac{
    L}{\delta}\rfloor
    +1)\sup_{t\in[0,L]} 
P(\sup_{s\in[t,t+\delta]}d'(X_n(s),X_n(t))>\frac{\epsilon}{3})\,,
\end{multline*}
the hypotheses imply that
\begin{equation}
  \label{eq:208}
 \lim_{\delta\to0}\limsup_{n\to\infty}P(w_L(X_n,\delta)>\epsilon)^{1/r_n}=0\,.
\end{equation}
Let also
\begin{equation}
  \label{eq:210}
  w'_L(X,\delta)=\inf_{\substack{0=t_0<t_1<\ldots<t_k=L:\\
t_j-t_{j-1}>\delta}}\,\,
\max_{j=1,\ldots,k}
\sup_{u,v\in [t_{j-1},t_j)}d'(X(u),X(v))\,.
\end{equation}
Since $w'_L(X,\delta)\le w_L(X,2\delta)$\,, provided $\delta< L/2$\,,
by \eqref{eq:208},
\begin{equation}
  \label{eq:211}
 \lim_{\delta\to0}\limsup_{n\to\infty}P(w_L'(X_n,\delta)>\epsilon)^{1/r_n}=0\,.  
\end{equation}
As each $X_n$ is  a member of the Skorohod space $\D(\R_+,\mathbb
E')$,
\[
  \lim_{\delta\to0}P(w_L'(X_n,\delta)>\epsilon)=0\,,
\]
so, on recalling \eqref{eq:211},
\begin{equation}
  \label{eq:209}
  \lim_{\delta\to0}\sup_{n}P(w'_L(X_n,\delta)>\epsilon)^{1/r_n}=0\,.
\end{equation}
Let $\{t_1, t_2,\ldots\}$ represent a dense subset of $\R_+$ such that 
$X_n(t_1),X_n(t_2),\ldots$ are exponentially tight of order $r_n$\,.
Since $\mathbb E'$ is complete and separable, every probability
measure on $\mathbb E'$ is tight, so, it may be assumed that
there exist
  compact subsets $K_1,K_2,\ldots$ such that, for all
$n$\,,
\begin{equation}
  \label{eq:200}
  P(X_n(t_i)\notin K_i)^{1/r_n}<\frac{1}{2^i}\,, i\in\N\,.
\end{equation}
Choose positive $\delta_1,\delta_2,\ldots$ such that
\begin{equation}
  \label{eq:201}
  P(w_{i}'(X_n,\delta_i)>\frac{1}{2^i})^{1/r_n}<\frac{\epsilon}{2^i}\,, i\in\N\,.
\end{equation}
The set
\begin{equation}
  \label{eq:202}
  A=\bigcap_{i\in \N}\{X:\,w'_{i}(X,\delta_i)\le\frac{1}{2^i}\,,
X(t_i)\in K_i\}
\end{equation}
has a compact closure, see, e.g.,
 Theorem A2.2 on p. 563 in Kallenberg \cite{Kal02}, and
\begin{equation}
  \label{eq:205}
  P(X_n\notin A)^{1/r_n}\le
\sum_{i=1}^\infty P(w_{i}(X_n,\delta_i)>\frac{1}{2^i})^{1/r_n}
+\sum_{i=1}^\infty P(X_n(t_i)\notin K_i)^{1/r_n}<2\epsilon\,.
\end{equation}
Thus, the sequence $X_n$ is exponentially tight of order $r_n$ in
$\D(\R_+,\mathbb E')$\,. Let $\Pi$ represent a deviability on 
$\D(\R_+,\mathbb E')$ that is an LD limit point of the distributions
of $X_n$\,. It is proved next that $\Pi(X)=0$ if $X$ is  a
discontinuous function. Suppose $X$ has a jump at $t$\,, i.e.,
$d'(X(t),X(t-))>0$\,.
Let $\rho$ denote a metric in $\D(\R_+,\mathbb E')$\,.
It may be assumed that
if $\rho(X',X)<\delta$\,, then there exists a continuous nondecreasing
function $\lambda(t)$ such that $\sup_{s\le t+1}d'(X'(\lambda
(s)),X(s))< \delta$ and $\sup_{s\le t+1}\abs{\lambda
  (s)-s}<\delta$\,, see, e.g., Ethier and Kurtz \cite{EthKur86},
Jacod and Shiryaev \cite{jacshir}.
Then, assuming that $2\delta<t$ and $\delta<1$\,,
 $\inf_{s'\in[s-\delta,s+\delta]}d'(X(s'),X(s))< \delta$\,, where $s\in[0,t+1]$\,.
Note that
\begin{multline*}
  d'(X(t),X(t-))\le d'(X(t),X'(s_1))+d(X'(s_1),X'(t))+
d'(X'(t),X'(s_2))+d'(X'(s_2),X(s_3))\\+d'(X(s_3),X(t-))
\end{multline*}
so that, with $s_1\in[t-\delta,t+\delta]$ such that $d'(X'(s_1),X(t))<\delta$\,,  $s_3\in[t-\delta,t]$ such that $d'(X(s_3),X(t-))<\delta$\,, and
 $s_2\in[s_3-\delta,s_3+\delta]$ such that
$d'(X'(s_2),X(s_3))<\delta$\,,
\[
  d'(X(t),X(t-))< 3\delta
+2\sup_{s\in[t-2\delta,t+\delta]}d(X'(s),X'(t))\,,
\]
which implies that, for $\delta$ small enough,
\begin{equation}
  \label{eq:212}
  \sup_{s\in[t-2\delta,t+\delta]}d'(X'(s),X'(t))>
\frac{d'(X(t),X(t-))}{3}\,.
\end{equation}
Since
\begin{multline*}
    \sup_{s\in[t-2\delta,t+\delta]}d'(X'(s),X'(t))\le
    \sup_{s\in[t,t+\delta]}d'(X'(s),X'(t)) +
  \sup_{s\in[t-2\delta,t-\delta]}d'(X'(s),X'(t))\\
+\sup_{s\in[t-\delta,t]}d'(X'(s),X'(t))
\le     \sup_{s\in[t,t+\delta]}d'(X'(s),X'(t)) +
  \sup_{s\in[t-2\delta,t-\delta]}d'(X'(s),X'(t-2\delta))\\
+\sup_{s\in[t-\delta,t]}d'(X'(s),X'(t-\delta))
+d'(X'(t-\delta),X'(t-2\delta))
+2d'(X'(t),X'(t-\delta))
\,,
\end{multline*}
 for  $\epsilon=d'(X(t),X(t-))/18$\,,
\begin{multline*}
  P(\rho(X_n,X)<\delta)\le
P(\sup_{s\in[t,t+\delta]}d'(X_n(s),X_n(t))>\epsilon)+
  P(\sup_{s\in[t-2\delta,t-\delta]}d'(X_n(s),X_n(t-2\delta))>\epsilon)\\+
P(\sup_{s\in[t-\delta,t]}d'(X_n(s),X_n(t-\delta))>\epsilon)\,.
\end{multline*}
Therefore, assuming $L\ge t$ and $r_n\ge1$\,,
\begin{multline*}
  P(\rho(X_n,X)<\delta)^{1/r_n}
\le P(
\sup_{s\in[t,t+\delta]}d'(X_n(s),X_n(t))>\epsilon)^{1/r_n}
+P(
\sup_{s\in[t-\delta,t]}d'(X_n(s),X_n(t-\delta))>\epsilon)^{1/r_n}
\\+P(\sup_{s\in[t-2\delta,t-\delta]}
d'(X_n(t),X_n(t-2\delta))>\epsilon)^{1/r_n}
\le3\sup_{t'\in[0,L]}
      P(\sup_{s\in[0,\delta]}d'(X_n(t'+s),X_n(t'))>\epsilon)^{1/r_n}\,.
\end{multline*}
Since\[
    \Pi(X)=\lim_{\delta\to0}\liminf_{n\to\infty}P(\rho(X_n,X)<\delta)^{1/r_n}\,,
\]
\eqref{eq:83} implies that $\Pi(X)=0$\,.
 \end{proof}
The discussion below concerns the properties of
 the idempotent processes that feature prominently in the paper.
All the processes assume values in $\R$\,.
The standard  Wiener idempotent process denoted by  $W=(W(t)\,,t\in\R_+)$ 
is defined as an idempotent process
with idempotent distribution
\begin{equation}
  \label{eq:59}
  \Pi^{W}(w)=\exp\bl(-\frac{1}{2}\int_0^\infty\dot w(t)^2\,dt\br)
\end{equation}
provided $w=(w(t)\,,t\in\R_+)\in\D(\R_+,\R)$
is absolutely continuous and $w(0)=0$, and
$\Pi^{W}(w)=0$ otherwise. The restriction to $[0,t]$ produces a
standard Wiener idempotent process on $[0,t]$ which is specified by the
idempotent distribution $
  \Pi_t^{W}(w)=\exp\bl(-1/2\,\int_0^t\dot w(s)^2\,ds\br)$\,.
The  Brownian bridge idempotent process on $[0,1]$ denoted by 
$W^0=(W^0(x),\,x\in[0,1])$ is defined as an
  idempotent process with the idempotent distribution
\begin{equation}
  \label{eq:10}
  \Pi^{W^0}(w^0)=\exp\bl(-\frac{1}{2}\int_0^1\dot w^0(x)^2\,dx\br)
\end{equation}
provided $w^0=(w^0(x)\,,x\in[0,1])\in\D([0,1],\R)$ is absolutely continuous and $w^0(0)=w^0(1)=0$, and
$\Pi^{W^0}(w^0)=0$ otherwise.
The  Brownian sheet idempotent process on $ [0,1]\times\R_+$ denoted by $(B(x,t)\,,x\in[0,1],\,t\in\R_+)$ is defined as a
two--parameter idempotent process with the distribution
\begin{equation}
  \label{eq:60}
  \Pi^B(b)=
\exp\bl(-\frac{1}{2}\int_{[0,1]\times\R_+}
\dot b(x,t)^2\,dx\,dt\br)
\end{equation}
provided $b=(b(x,t)\,,x\in[0,1]\,, t\in\R_+)$ is absolutely continuous with respect to Lebesgue measure on
$\R_+\times [0,1]$ and $b(x,0)=b(0,t)=0$, and
$\Pi^{B}(b)=0$ otherwise.
 The  Kiefer idempotent process on $[0,1]\times\R_+$ denoted by
$(K(x,t)\,,x\in[0,1]\,,t\in\R_+),$ is defined
as a two--parameter idempotent process with the idempotent distribution
\begin{equation}
  \label{eq:11}
  \Pi^K(k)=\exp\bl(-\frac{1}{2}\int_{[0,1]\times\R_+}
\dot k(x,t)^2\,dx\,dt\br)
\end{equation}
provided $k=(k(x,t)\,,x\in[0,1]\,, t\in\R_+)$ is absolutely continuous with respect to Lebesgue measure on
$\R_+\times [0,1]$ and $k(0,t)=k(1,t)=k(x,0)=0$, and
$\Pi^{K}(k)=0$ otherwise.
In this paper we view it as an element of $\D(\R_+,\D([0,1],\R_+))$\,.
Furthermore, as the deviabilities  that
$W$\,, $W^0$\,, $B$, or $K$ have discontinuous paths are equal to zero, these
idempotent processes can be considered as having paths from
$\bbC(\R_+,\R)$\,, $\bbC([0,1],\R)$\,, $\bbC(\R_+,\bbC([0,1],\R))$\,
and
$\bbC(\R_+,\bbC([0,1],\R))$\,, respectively.

Being LD limits of their stochastic prototypes, these idempotent
processes have similar properties, as  summarised in the next lemma.
\begin{lemma}
  \label{le:idemp_prop}
  \begin{enumerate}
  \item For $x>0$\,, 
the idempotent process $(B(x,t)/\sqrt{x}\,,t\in\R_+)$ is a
    standard Wiener idempotent process.
\item 
For $t>0$\,, $(K(x,t)/\sqrt{t}\,,x\in[0,1])$ is a Brownian bridge
idempotent process. For $x\in(0,1)$\,, the idempotent process
  $(K(x,t)/\sqrt{x(1-x)}\,,t\in\R_+)$ is a standard Wiener idempotent
  process.
\item The Kiefer idempotent process can be written as
  \begin{equation}
    \label{eq:61}
    K(x,t)=-\int_0^x\frac{K(y,t)}{1-y}\,dy+B(x,t)\,,
  \end{equation}
where $B(x,t)$ is a Brownian sheet idempotent process. Conversely, if
$B(x,t)$ is a Brownian sheet idempotent process and \eqref{eq:61}
holds, then $K(x,t)$ is a Kiefer idempotent process.
Also,
\begin{equation}
  \label{eq:28}
    W^0(x)=-\int_0^x\frac{W^0(y)}{1-y}\,dy+W'(x)\,,
\end{equation}
where $W'(x)$ is a standard Wiener idempotent process.
   \end{enumerate}
\end{lemma}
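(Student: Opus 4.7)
My plan is to prove each of the three parts by direct computation with the idempotent distributions, exploiting the fact that the idempotent distribution of a measurable image is the supremum of the source deviability over the preimage, and matching this supremum against the closed form deviability of the target process.

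For part 1, fix $x>0$ and set $w(t)=B(x,t)/\sqrt{x}$. A function $b$ with mixed partial $\dot b(y,s)=h(y,s)$ and $b(x,t)=\sqrt{x}\,w(t)$ must satisfy $\int_0^x h(y,s)\,dy=\sqrt{x}\,\dot w(s)$ for a.e.\ $s$. The Cauchy--Schwarz inequality gives $\int_0^\infty \dot w(s)^2\,ds\le \int_0^\infty\int_0^1 h(y,s)^2\,dy\,ds$, with equality achieved by the constant-in-$y$ profile $h(y,s)=\dot w(s)/\sqrt{x}\,\mathbf{1}_{\{y\le x\}}$. Taking the supremum over admissible $h$ in \eqref{eq:60} reproduces exactly the standard Wiener deviability \eqref{eq:59}.

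Part 2 is handled analogously. For the Brownian bridge claim with $t>0$ fixed, set $w^0(x)=K(x,t)/\sqrt{t}$; the Kiefer constraints $k(0,t)=k(1,t)=0$ pass to $w^0(0)=w^0(1)=0$, and a Cauchy--Schwarz argument applied in the time variable yields $\int_0^1\dot w^0(x)^2\,dx\le\int_{[0,1]\times\R_+}\dot k(y,s)^2\,dy\,ds$, matching \eqref{eq:10}. For the Wiener claim with $(K(x_0,t)/\sqrt{x_0(1-x_0)},\,t\in\R_+)$, the extra constraint $k(1,t)=0$ splits into the two integral conditions $\int_0^{x_0}\dot k(z,s)\,dz=\sqrt{x_0(1-x_0)}\,\dot w(s)$ and $\int_{x_0}^1\dot k(z,s)\,dz=-\sqrt{x_0(1-x_0)}\,\dot w(s)$; applying Cauchy--Schwarz separately on $[0,x_0]$ and on $[x_0,1]$ and summing gives $\int_0^1\dot k(z,s)^2\,dz\ge \dot w(s)^2$, with equality on the natural piecewise-constant profile.

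For part 3, the heart of the argument is an isometry between the mixed derivatives of $k$ and $b$ related by \eqref{eq:61}. Differentiating \eqref{eq:61} in $x$ and in $t$ gives $\dot b(x,t)=\dot k(x,t)+(1-x)^{-1}\int_0^x\dot k(y,t)\,dy$. Writing $u(x)=\dot k(x,t)$, $U(x)=\int_0^x u$ and $v(x)=\dot b(x,t)$, integration by parts yields
\begin{equation*}
\int_0^1 v(x)^2\,dx-\int_0^1 u(x)^2\,dx=\Bigl[\frac{U(x)^2}{1-x}\Bigr]_{x=0}^{x=1},
\end{equation*}
and the Kiefer boundary condition $k(1,t)=0$, which reads $U(1)=0$, makes the boundary term vanish. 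In the reverse direction, given $b$, solving the first-order ODE $(U(x)/(1-x))'=v(x)/(1-x)$ uniquely recovers $u(x)=v(x)-\int_0^x v(y)/(1-y)\,dy$, and a direct interchange of integration order shows that $\int_0^1 u(x)\,dx=0$ automatically, so the resulting $k$ satisfies $k(1,t)=0$ with no constraint imposed on $b$ at $x=1$. Combined with the isometry, this identifies the image of the Brownian sheet deviability under $B\mapsto K$ as precisely the Kiefer deviability, and vice versa. The one-variable assertion \eqref{eq:28} follows by restricting the same argument to a single time slice, with the Brownian bridge constraint $w^0(1)=0$ playing the role of $k(1,t)=0$.

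The main obstacle is the bookkeeping in part 3, specifically verifying that the bijection between Kiefer-admissible and Brownian-sheet-admissible derivative profiles is correctly set up: Kiefer carries the extra boundary condition at $x=1$ while Brownian sheet does not, so one must confirm both that this condition is automatically produced by the inverse map and that the boundary term in the integration by parts vanishes under it.
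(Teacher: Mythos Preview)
Your approach matches the paper's: Cauchy--Schwarz optimization for parts 1 and 2, and an integration-by-parts isometry for part 3. The one claim that needs sharpening is in part 3, where you write that ``the Kiefer boundary condition $k(1,t)=0$, which reads $U(1)=0$, makes the boundary term vanish.'' The boundary term is $\lim_{x\to1}U(x)^2/(1-x)$, and $U(1)=0$ alone does not force this limit to be zero (take $U(x)=\sqrt{1-x}$). In the forward direction this is easily repaired: since $u=\dot k(\cdot,t)\in L^2$ and $U(1)=0$, Cauchy--Schwarz gives $|U(x)|=|\int_x^1 u|\le\sqrt{1-x}\,(\int_x^1 u^2)^{1/2}$, so $U(x)^2/(1-x)\le\int_x^1 u^2\to0$.

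In the reverse direction, however, you do not yet know $u\in L^2$, so this Cauchy--Schwarz argument is unavailable. The paper handles this by working directly with the explicit solution $U(x)=(1-x)\int_0^x v(y)/(1-y)\,dy$ and proving $(1-x)\bigl(\int_0^x v/(1-y)\,dy\bigr)^2\to0$ via a threshold--splitting argument: for any $a>0$, bound the integrand by $a/(1-y)$ on $\{|v|\le a\}$ and apply Cauchy--Schwarz on $\{|v|>a\}$, then let $x\to1$ followed by $a\to\infty$, using $\int_0^1 v^2<\infty$. Once the boundary term is shown to vanish this way, your integration-by-parts identity gives $\int u^2=\int v^2<\infty$ and the bijection is complete. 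You correctly flag this as the main obstacle; the splitting estimate is the missing ingredient.
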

\begin{proof}
  Parts 1 and 2 are elementary. For instance, 
  \begin{multline*}
    \Pi((\frac{K(x,t)}{\sqrt{t}}\,,x\in[0,1])=(w^0(x)\,,x\in[0,1]))\\=
\sup_{k:\,k(x,t)=\sqrt{t}w^0(x)\,,x\in[0,1]}
\exp\bl(-\frac{1}{2}\int_{\R_+}\int_0^1
\dot k(x,s)^2\,dx\,ds\br)\,.
  \end{multline*}
An application of the Cauchy--Schwarz inequality shows that
the optimal $k$ is 
\[
  k(x,s)=\frac{s\wedge t}{\sqrt{t}}\,w^0(x)\,.
\]
As for $(K(x,t)/\sqrt{x(1-x)}\,,t\in\R_+)$\,, the optimal trajectory 
$(k(y,t)\,,y\in[0,1])$ to get to $w(t)\sqrt{x(1-x)}$ at $x$ is
\[
  k(y,t)=
  \begin{cases}
    w(t)\sqrt{x(1-x)}\,\displaystyle\frac{y}{x}\,,&\text{ if
    }y\in[0,x]\,,
\\\\
    w(t)\sqrt{x(1-x)}\,\displaystyle\frac{1-y}{1-x}\,,&\text{ if
    }y\in[x,1]\,.
  \end{cases}
\]
In order to prove part 3, it suffices to show that if
\begin{equation}
  \label{eq:62}  
    k(x,t)=-\int_0^x\frac{k(y,t)}{1-y}\,dy+b(x,t)\,,
\end{equation}
with $k$ and $b$ being absolutely continuous and with $\Pi^B(b)>0$\,, then
$k(1,t)=0$ and
\begin{equation}
  \label{eq:64}
  \int_0^\infty\int_0^1 \dot k(x,t)^2\,dx\,dt 
= \int_0^\infty\int_0^1 \dot b(x,t)^2\,dx\,dt \,.
\end{equation}
Solving \eqref{eq:62} yields, for $x<1$\,,
\begin{equation}
  \label{eq:65}
  k(x,t)=(1-x)\int_0^x\frac{b_y(y,t)}{1-y}\,dy\,,
\end{equation}
where $b_y(y,t)=\int_0^t \dot b(s,y)\,ds$\,.
By the Cauchy--Schwarz inequality,
\begin{multline}
  \label{eq:66}
  \abs{\int_0^x\frac{b_y(y,t)}{1-y}\,dy}
\le\sqrt{\int_0^x\frac{1}{(1-y)^2}\,dy}\sqrt{\int_0^1b_y(y,t)^2dy}
\le\frac{\sqrt{t}}{\sqrt{1-x}}\,\sqrt{\int_{ [0,1]\times \R_+}
\dot b(y,s)^2\,dy\,ds}\,.
\end{multline}
We obtain that $k(x,t)\to0$ as $x\to1$ provided $\Pi^B(b)>0$\,.

Let $k_t(x,t)=\int_0^x\dot k(y,t)\,dy$\,. We show that if
$\Pi^B(b)>0$\,, then, a.e.,
\begin{equation}
  \label{eq:68}
\frac{1}{1-x}\,  k_t(x,t)^2 \to0\text{ as } x\to 1\,.
\end{equation}
Given arbitrary $a>0$\,, by \eqref{eq:65},
\begin{multline*}
  \frac{k_t(x,t)^2}{1-x}=(1-x)\Bl(\int_0^x\frac{\dot
    b(y,t)\,dy}{1-y}\Br)^2\le 2(1-x)a^2
\Bl(\int_0^x\frac{dy}{1-y}\Br)^2\\
+2(1-x)\int_0^x\frac{dy}{(1-y)^2}
\int_0^x\dot b(y,t)^2\ind{\abs{\dot b(y,t)}> a}\,dy
\le 2(1-x)\abs{\ln(1-x)}^2a^2
+2\int_0^x\dot b(y,t)^2\ind{\abs{\dot b(y,t)}> a}\,dy\,.
\end{multline*}
Since $\int_0^1\dot b(y,t)^2\,dy<\infty$ a.e., the latter rightmost
side tends to $0$ a.e.\,, as $x\to1$ and $a\to\infty$\,.

We are now in a position to prove \eqref{eq:64}.
Owing to \eqref{eq:62},
\begin{equation}
  \label{eq:69}
  \int_0^\infty \int_0^1\dot b(x,t)^2\,dx\,dt=
\int_0^\infty \int_0^1\Bl(     \dot k(x,t)^2+2\dot k(x,t)\frac{k_t(x,t)}{1-x}+
(\frac{k_t(x,t)}{1-x})^2\Br)\,dx\,dt\,.
\end{equation}
Integration by parts with the account of \eqref{eq:68} yields, for
almost all $t$\,,
\[
  \int_0^1\dot k(x,t)\frac{k_t(x,t)}{1-x}\,dx
=-  \int_0^1 k_t(x,t)\bl(\frac{\dot k(x,t)}{1-x}
+\frac{ k_t(x,t)}{(1-x)^2})\,dx
\]
so that
\[
  \int_0^1\dot k(x,t)\frac{k_t(x,t)}{1-x}\,dx=-\frac{1}{2}\,
\int_0^1\frac{ k_t(x,t)^2}{(1-x)^2}\,dx\,.
\]
Recalling \eqref{eq:69} implies \eqref{eq:64}.\end{proof}

 \section{A nonlinear renewal equation}\label{renewal}
This section is concerned with the  properties of the equation
\begin{equation}
  \label{eq:100}
  g(t)=f(t)+\int_0^tg(t-s)^+\,dF(s)\,, t\in\R_+\,.
\end{equation}
It is assumed that $f(t)$ is a locally bounded measurable
function and
that $F(t)$ is a continuous distribution function 
 on $\R_+$ with $F(0)<1$\,.
The existence and uniqueness  of an essentially locally bounded
solution $g(t)$ to \eqref{eq:100} follows from Lemma B.2 in Puhalskii and
Reed \cite{PuhRee10}.
\begin{lemma}
  \label{le:ac} If the functions $f$ and $F$ are absolutely continuous with
respect to Lebesgue measure and $F(0)=0$\,, then the function $g$ is absolutely
continuous too.
\end{lemma}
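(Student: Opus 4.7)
The plan is to construct $g$ as the uniform limit of Picard iterates that are absolutely continuous with derivatives uniformly bounded in $\mathbb L_1$ and equi-integrable, and then to identify the limit as absolutely continuous via the Dunford--Pettis theorem. Fix $T>0$; all estimates are on $[0,T]$. The key auxiliary fact I would record first is that whenever $h$ is absolutely continuous on $[0,T]$ and $\beta\in\mathbb L_1([0,T])$, the convolution $(h*\beta)(t)=\int_0^t h(t-s)\beta(s)\,ds$ is absolutely continuous with a.e.\ derivative $h(0)\beta(t)+(h'*\beta)(t)$; this follows from writing $h(u)=h(0)+\int_0^u h'(\tau)\,d\tau$ and applying Fubini.

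Define $g_0=f$ and $g_{n+1}(t)=f(t)+\int_0^t g_n(t-s)^+\,dF(s)$. Since the positive part preserves absolute continuity with $(\phi^+)'=\phi'\ind{\phi>0}$ a.e., the auxiliary fact yields by induction that each $g_n$ is absolutely continuous on $[0,T]$, with $g_n(0)=f(0)$ and
\begin{equation*}
g_n'(t)=f'(t)+f(0)^+F'(t)+\int_0^t g_{n-1}'(t-s)\ind{g_{n-1}(t-s)>0}F'(s)\,ds\text{ a.e.}
\end{equation*}
A Volterra contraction argument using $|a^+-b^+|\le|a-b|$ gives $\|g_{n+1}-g_n\|_\infty\le F(T)\,\|g_n-g_{n-1}\|_\infty$, so $g_n$ converges uniformly to the unique essentially bounded solution $g$ supplied by Lemma~B.2 of Puhalskii and Reed \cite{PuhRee10}.

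Next I would show that $\{g_n'\}$ is bounded in $\mathbb L_1([0,T])$ and equi-integrable. Setting $a_n=\|g_n'\|_{\mathbb L_1([0,T])}$ and integrating the recursion,
\begin{equation*}
a_n\le\|f'\|_{\mathbb L_1([0,T])}+f(0)^+F(T)+F(T)\,a_{n-1}\,,
\end{equation*}
so $F(T)<1$ by \eqref{eq:45} gives $\sup_n a_n<\infty$. For equi-integrability set $\rho_n(\delta)=\sup\{\int_E|g_n'|:E\subset[0,T],\,|E|<\delta\}$ and define $\rho_{f'}(\delta),\rho_{F'}(\delta)$ analogously; a Fubini bound on the convolution term yields
\begin{equation*}
\rho_n(\delta)\le\rho_{f'}(\delta)+f(0)^+\rho_{F'}(\delta)+F(T)\,\rho_{n-1}(\delta)\,,
\end{equation*}
and iterating together with $\rho_{f'}(\delta),\rho_{F'}(\delta)\to0$ as $\delta\to0$ (by absolute continuity of the indefinite integrals of $f'$ and $F'$) gives $\sup_n\rho_n(\delta)\to0$. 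By Dunford--Pettis, a subsequence $g_{n_k}'$ converges weakly in $\mathbb L_1([0,T])$ to some $\psi$; testing weak convergence against $\ind{[0,t]}$ and invoking the uniform convergence $g_{n_k}\to g$ yields $g(t)=g(0)+\int_0^t\psi(s)\,ds$ for every $t\in[0,T]$, so $g$ is absolutely continuous on $[0,T]$ and hence on $\R_+$. I expect the main obstacle to be the equi-integrability step: a mere $\mathbb L_1$ bound on the derivatives only yields a BV limit, and the careful Fubini bookkeeping together with the strict inequality $F(T)<1$ is what rules out a singular component of the limiting derivative.
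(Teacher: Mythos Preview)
Your argument is correct and, at its core, runs parallel to the paper's: both set up the Picard iterates $g_n$, both obtain a recursion of the form (absolute--continuity modulus of $g_n$) $\le$ (data term) $+\,F(T)\cdot$(modulus of $g_{n-1}$), and both close using $F(T)<1$. The paper works directly with the variation $\phi_k(\delta)=\sup\{\sum_i|g_k(t_i)-g_k(t_{i-1})|:\sum_i(t_i-t_{i-1})\le\delta\}$, bounding the convolution increment via $\int_{t_{i-1}}^{t_i}|g_{k-1}(t_i-s)|\,dF(s)\le M\bigl(F(t_i)-F(t_{i-1})\bigr)$; you instead compute $g_n'$ explicitly and bound $\int_E|g_n'|$. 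Your $\rho_n(\delta)$ and the paper's $\phi_k(\delta)$ are the same quantity, so the recursions coincide. Two remarks. First, Dunford--Pettis is superfluous: once $\sup_n\rho_n(\delta)\to0$ and $g_n\to g$ uniformly, the inequality $\sum_i|g(t_i)-g(t_{i-1})|=\lim_n\sum_i|g_n(t_i)-g_n(t_{i-1})|\le\sup_n\rho_n(\delta)$ already yields absolute continuity of $g$; this is precisely how the paper concludes. Second, your appeal to \eqref{eq:45} for $F(T)<1$ is fine for the paper's application but is not among the lemma's own hypotheses (Appendix~\ref{renewal} assumes only $F(0)<1$); the paper covers the general case by first proving absolute continuity on $[0,t_0]$ with $F(t_0)<1$ and then bootstrapping via $g(t+t_0)=f(t+t_0)+\int_t^{t+t_0}g(t+t_0-s)\,dF(s)+\int_0^t g(t+t_0-s)\,dF(s)$.
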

\begin{proof}
  Use Picard iterations. Let $g_0(t)=f(t)$ and
  \begin{equation}
    \label{eq:160}
    g_{k}(t)=f(t)+\int_0^tg_{k-1}(t-s)^+\,dF(s)\,.
  \end{equation}
The functions $g_k$ are seen to be continuous. Let $\epsilon>0$\,, 
$T>0$ and $0\le t_0\le t_1\le\ldots\le t_l\le T$\,.
Since $g_k\to g$ locally uniformly, see Lemma B.1 in
Puhalskii and Reed \cite{PuhRee10}, 
the function $g$ is continuous and $\sup_k\sup_{t\in[0,T]}\abs{g_k(t)}\le M$\,, for some $M>0$\,.
  Note that
\begin{multline}
  \label{eq:162}
  \abs{\int_0^{t_i}g_{k-1}(t_i-s)^+\,dF(s)-\int_0^{t_{i-1}}g_{k-1}(t_{i-1}-s)^+\,dF(s)}
\\\le \int_{t_{i-1}}^{t_i}\abs{g_{k-1}(t_i-s)}\,dF(s)
+\int_0^{T}\ind{s\le t_{i-1}}\abs{g_{k-1}(t_i-s)-g_{k-1}(t_{i-1}-s)}\,dF(s)\,.
\end{multline}
Let
\begin{equation}
  \label{eq:180}
  \psi(\delta)=\sup_{0\le t_0\le t_1\le\ldots\le t_l\le T}
\{\sum_{i=1}^l \abs{f(t_i)-f(t_{i-1}}+
M\sum_{i=1}^l \abs{F(t_i)-F(t_{i-1})}:\,\sum_{l=1}^l(t_i-t_{i-1})\le\delta\}
\end{equation}
and
\begin{equation}
  \label{eq:186}
    \phi_k(\delta)=\sup_{0\le t_0\le t_1\le\ldots\le t_l\le T}
\{\sum_{i=1}^l \abs{g_k(t_i)-g_k(t_{i-1})}:\,\sum_{l=1}^l(t_i-t_{i-1})\le\delta\}\,.
\end{equation}
By \eqref{eq:160} and \eqref{eq:162}, for $k\ge1$\,,
\begin{equation}
  \label{eq:187}
  \phi_k(\delta)\le \psi(\delta)+\phi_{k-1}(\delta)F(T)\,.
\end{equation}
Let
\begin{equation}
  \label{eq:188}
      \phi(\delta)=\sup_{0\le t_0\le t_1\le\ldots\le t_l\le T}
\{\abs{g(t_i)-g(t_{i-1})}:\,\sum_{l=1}^l(t_i-t_{i-1})\le\delta\}\,.
\end{equation}
Suppose that $F(t_0)<1$\,. Since $g_k\to g$ locally uniformly, as $k\to\infty$\,,
$\phi_k(\delta)\to\phi(\delta)$\,.
Letting $k\to\infty$ in  \eqref{eq:187} implies that $\phi(\delta)\le
\psi(\delta)/(1-F(t_0))$ so that $\phi(\delta)\to0$\,, as
$\delta\to0$\,. Hence, $g(t)$ is absolutely continuous on $[0,t_0]$\,.
Next, as in Puhalskii and Reed \cite{PuhRee10}, write, for
$t\in[0,t_0]$\,,
\[
  g(t+t_0)=f(t+t_0)+\int_t^{t+t_0}g(t+t_0-s)\,dF(s)+
\int_0^tg(t+t_0-s)\,d F(s)\,.
\]
By what has been proved, the sum of the first two terms on the
righthand side is an absolutely continuous function of $t$ on
$[0,t_0]$\,. The preceding argument implies that $g(t+t_0)$ is
absolutely continuous in $t$ on $[0,t_0]$\,. Iterating the argument
proves the absolute continuity of $g(t)$ on $\R_+$\,.
  \end{proof}
\def\cprime{$'$} \def\cprime{$'$} \def\cprime{$'$} \def\cprime{$'$}
  \def\cprime{$'$} \def\cprime{$'$} \def\cprime{$'$} \def\cprime{$'$}
  \def\cprime{$'$} \def\cprime{$'$} \def\cprime{$'$}
  \def\polhk#1{\setbox0=\hbox{#1}{\ooalign{\hidewidth
  \lower1.5ex\hbox{`}\hidewidth\crcr\unhbox0}}} \def\cprime{$'$}
  \def\cprime{$'$} \def\cprime{$'$}


\end{document}